\def\CC{\leavevmode\setbox0=\hbox{h}\dimen=\ht0 \advance \dimen by-1ex\rlap{\raise.9\dimen\hbox{\kern .15em \char'27}} C}
\def\Va{\stackrel{\cV^a}{\longleftrightarrow} \hspace{-2.8ex} \mbox{\f /}\;\;\;}
\def\phiv{\stackrel{\phi \ge v}{\longleftrightarrow} \hspace{-2.8ex} \mbox{\f /}\;\;\;}
\def\U{\stackrel{U}{\longleftrightarrow} \hspace{-2.8ex} \mbox{\f /}\;\;\;}  
\def\IP{{\mathbb P}}
\def\IR{{\mathbb R}}
\def\IN{{\mathbb N}}
\def\IL{{\mathbb L}}
\def\IZ{{\mathbb Z}}
\def\IA{{\mathbb A}}
\def\IB{{\mathbb B}}
\def\Ib{{\mathbb b}}
\def\n{\noindent}
\def\dsl{\textstyle\sum\limits}
\def\dis{\displaystyle}
\def\fr{\mbox{\footnotesize $\dis\frac{1}{2}$}}
\def\ov{\overline}
\def\ve{\varepsilon}
\def\f{\footnotesize}
\def\r{\rightarrow}
\def\point{{\mbox{\large $.$}}}
\def\wh{\widehat}
\def\wt{\widetilde}
\def\cA{{\cal A}}
\def\cB{{\cal B}}
\def\cC{{\cal C}}
\def\cE{{\cal E}}
\def\cL{{\cal L}}
\def\cE{{\cal E}}
\def\cI{{\cal I}}
\def\cJ{{\cal J}}
\def\cF{{\cal F}}
\def\cR{{\cal R}}
\def\cS{{\cal S}}
\def\cG{{\cal G}}
\def\cU{{\cal U}}
\def\cV{{\cal V}}
\newtheorem{theorem}{Theorem}[section]
\newtheorem{lemma}[theorem]{Lemma}
\newtheorem{corollary}[theorem]{Corollary}
\newtheorem{proposition}[theorem]{Proposition}
\newtheorem{remark}[theorem]{Remark}
\begin{document}

\noindent
~

\bigskip
\begin{center}
{\bf ON THE COST OF THE BUBBLE SET FOR \\ RANDOM INTERLACEMENTS}
\end{center}

\begin{center}
Alain-Sol Sznitman
\end{center}


\begin{abstract}
The main focus of this article concerns the strongly percolative regime of the vacant set of random interlacements on $\IZ^d$, $d \ge 3$. We investigate the occurrence in a large box of an excessive fraction of sites that get disconnected by the interlacements from the boundary of a concentric box of double size. The results significantly improve our findings in \cite{Szni15}. In particular, if, as expected, the critical levels for percolation and for strong percolation of the vacant set of random interlacements coincide, the asymptotic upper bound that we derive here matches in principal order a previously known lower bound. A challenging difficulty revolves around the possible occurrence of droplets that could get secluded by the random interlacements and thus contribute to the excess of disconnected sites, somewhat in the spirit of the Wulff droplets for Bernoulli percolation or for the Ising model. This feature is reflected in the present context by the so-called {\it bubble set}, a possibly quite irregular random set. A pivotal progress in this work has to do with the improved construction of a coarse grained random set accounting for the cost of the bubble set. This construction heavily draws both on the {\it method of enlargement of obstacles} originally developed in the mid-nineties in the context of Brownian motion in a Poissonian potential in \cite{Szni97a}, \cite{Szni98a}, and on the {\it resonance sets} recently introduced by Nitzschner and the author in \cite{NitzSzni20} and further developed in a discrete set-up by Chiarini and Nitzschner in \cite{ChiaNitz}.

\end{abstract}

\vspace{4cm}

\n
Departement Mathematik\\ 
ETH Z\"urich\\
CH-8092 Z\"urich\\
Switzerland

\newpage
\thispagestyle{empty}
~

\newpage
\setcounter{page}{1}

\setcounter{section}{-1}
\section{Introduction}
Both random interlacements and the Gaussian free field are models with long range dependence for which the Dirichlet energy plays an important role. The study of largely deviant events and their impact on the medium in the context of the percolation of the vacant set of random interlacements, or in the closely related level-set percolation of the Gaussian free field, has attracted much attention over the recent years. In particular, important progress has been achieved in the understanding of various instances of atypical disconnection of macroscopic bodies, see for instance \cite{ChiaNitz20a}, \cite{ChiaNitz20b}, \cite{ChiaNitz}, \cite{GoswRodrSeve21},  \cite{LiSzni14},  \cite{Nitz18a}, \cite{NitzSzni20}, \cite{Szni15}, \cite{Szni17}, \cite{Szni19b}, \cite{Szni21a}, \cite{Szni21b}.

\medskip
Our main focus in this article lies in the strongly percolative regime of the vacant set of random interlacements on $\IZ^d$, $d \ge 3$. Specifically, we investigate the occurrence in a large box centered at the origin of an excessive fraction of sites that get disconnected by the interlacements from the boundary of a concentric box of double size. Our results significantly improve our findings in \cite{Szni21b}. In particular, if, as expected, the critical levels for percolation and for strong percolation of the vacant set of random interlacements coincide, the asymptotic upper bound that we derive here matches in principal order a previously known lower bound. As often the case, the derivation of the lower bound suggests a scheme on how to produce the largely deviant event, and a matching upper bound confers some degree of pertinence to this scheme. A challenging difficulty in the derivation of an upper bound for the present problem revolves around the possible occurrence of droplets that could get secluded by the random interlacements and thus contribute to the excess of disconnected sites, somewhat in the spirit of the Wulff droplet in the context of Bernoulli percolation or for the Ising model, see \cite{Cerf00}, \cite{Bodi99}. In the present context this feature is reflected by the so-called {\it bubble set}. A pivotal progress in this work has to do with an improved construction of a coarse grained random set accounting for the cost of the bubble set. The bubble set is quite irregular and our construction heavily draws on a combination of ideas and techniques from the {\it method of enlargement of obstacles} in \cite{Szni97a}, \cite{{Szni98a}}, and from the {\it resonance sets} in \cite{NitzSzni20}, \cite{ChiaNitz}.

\medskip
We now describe the results in more details. We let $\cI^u$ stand for the random interlacements at level $u \ge 0$ in $\IZ^d$, $d \ge 3$, and $\cV^u = \IZ^d \backslash \cI^u$ for the corresponding vacant set. We refer to \cite{CernTeix12}, \cite{DrewRathSapo14c}, and Section 1 of \cite{Szni17} for background material. We are interested in the {\it strong percolative regime} of the vacant set, i.e.~we assume that
\begin{equation}\label{0.1}
0 < u < \ov{u} \; (\le u_*),
\end{equation}
where $u_*$ and $\ov{u}$ respectively denote the critical level for the percolation and for the strong percolation of $\cV^u$. When $u < u_*$ the infinite cluster $\cC^u_\infty$ of $\cV^u$ exists and is unique a.s., see \cite{Szni10}, \cite{SidoSzni09}, \cite{Teix09a}, and informally $u < \ov{u}$ corresponds to the local presence and local uniqueness of the infinite cluster. We refer to (2.3) of \cite{Szni17} or (1.26) of \cite{Szni21b} for the precise definition of $\ov{u}$. It is expected (and presently the object of active research) that $u_* = \ov{u}$. The corresponding equality has been established in the closely related model of the level-set percolation of the Gaussian free field, see \cite{DumiGoswRodrSeve20}.

\medskip
We write $\theta_0$ for the percolation function
\begin{equation}\label{0.2}
\theta_0(a) = \IP[0 \Va \infty], \; a \ge 0,
\end{equation}
where $\{0 \Va \infty\}$ stands for the event that $0$ does not belong to an infinite connected component of $\cV^a$. The function $\theta_0$ is known to be non-decreasing, left-continuous, identically equal to $1$ on $(u_*, \infty)$ with a possible (although not expected from simulations) jump at $u_*$. It is continuous elsewhere, see \cite{Teix09a}. We also refer to \cite{Szni21} for the $C^1$-property of $\theta_0$ below a certain level $\wh{u}$, which is also expected to coincide with $u_*$. Nevertheless, the behavior of $\theta_0$ in the vicinity of $u_*$ is poorly understood so far, and the convexity of $\theta_0$ below $u_*$ is presently unclear, see Remark \ref{rem5.2} 3). We write $\wh{\theta}$ for the function
\begin{equation}\label{0.3}
\wh{\theta}(a) = \theta_0(a) \,1 \{a < \ov{u}\} + 1\{a \ge \ov{u}\}, \; a \ge 0,
\end{equation}
and denote by $\ov{\theta}_0$ the right-continuous modification of $\theta_0$. One has
\begin{equation}\label{0.4}
\ov{\theta}_0 \le \wh{\theta} \quad \mbox{(with equality of the two functions if $\ov{u}$ and $u_*$ coincide)}.
\end{equation}

\begin{center}
\psfrag{1}{$1$}
\psfrag{0}{$0$}
\psfrag{a}{$a$}
\psfrag{wt}{$\wh{\theta}$}
\psfrag{t0}{$\theta_0$}
\psfrag{u*}{$u_*$}
\psfrag{u}{$\ov{u}$}
\includegraphics[width=8cm]{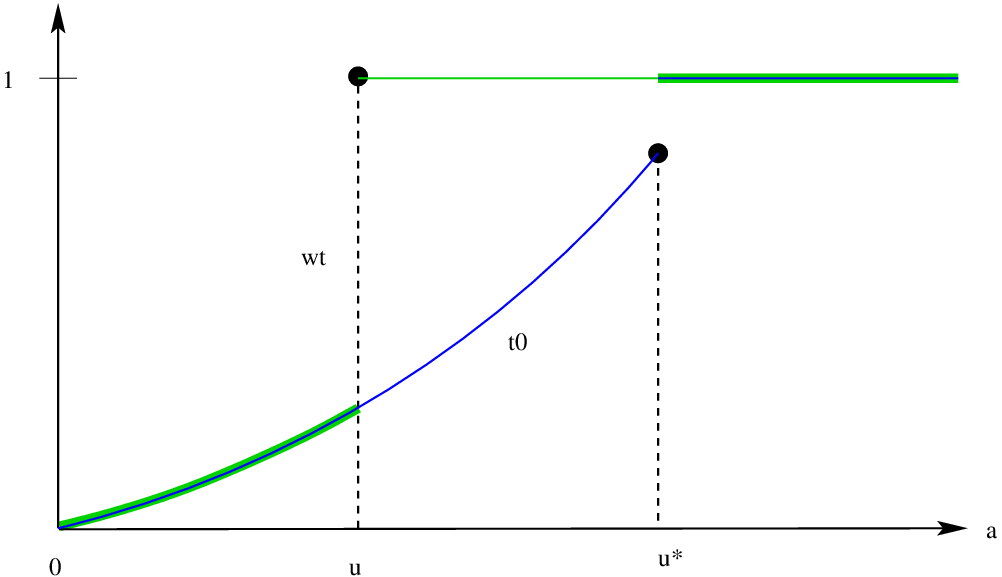}
\end{center}

\bigskip
\begin{tabular}{ll}
Fig.~1: & A heuristic sketch of the functions $\theta_0$ (with a possible but not expected\\
& jump at $u_*$) and $\wh{\theta}$. If $\ov{u} = u_*$ holds (as expected), then $\wh{\theta}$ coincides with\\
&the right-continuous modification $\ov{\theta}_0$ of $\theta_0$. 
\end{tabular}

\bigskip\medskip\n
We now describe the kind of excess disconnection event that we are interested in. Given $N \ge 1$, we consider
\begin{equation}\label{0.5}
D_N = [-N,N]^d \cap \IZ^d,
\end{equation}
and view this discrete box centered at the origin as the discrete blow-up  $(ND) \cap \IZ^d$ of the continuous shape
\begin{equation}\label{0.6}
D = [-1,1]^d \; (\subseteq \IR^d).
\end{equation}

\n
In addition, for $r \ge 0$, we write $S_r = \{x \in \IZ^d$; $|x|_\infty = r\}$ for the set of points in $\IZ^d$ with sup-norm equal to $r$, and define
\begin{equation}\label{0.7}
\mbox{$\cC^u_r =$ the connected component of $S_r$ in $\cV^u \cup S_r$ (so $S_r \subseteq \cC^u_r$ by convention)}.
\end{equation}

\n
Our focus lies in the set of points in $D_N$ that get disconnected by $\cI^u$ from $S_{2N}$, i.e.~$D_N \backslash \cC^u_{2N}$. We also consider its subset $D_N \backslash \cC_N^u$ of points in the interior of $D_N$ disconnected by $\cI^u$ from $S_N$. We are interested in their ``excessive presence''. Specifically, we consider
\begin{equation}\label{0.8}
\nu \in [\theta_0(u), 1),
\end{equation}
and the excess events (where for $F$ finite subset of $\IZ^d$, $|F|$ denotes the number of points in $F$):
\begin{equation}\label{0.9}
\cA_N = \{| D_N \backslash \cC^u_{2N}| \ge \nu\, |D_N| \} \supseteq \cA^0_N = \{| D_N \backslash \cC^u_N | \ge \nu \,|D_N|\}.
\end{equation}

\n
An asymptotic lower bound for $\IP[\cA^0_N]$ was derived in (6.32) of \cite{Szni21a}. Together with Theorem 2 of \cite{Szni21} it shows that for $0 < u < u_*$ and $\nu \in [\theta_0(u),1)$,
\begin{align}
& \liminf\limits_N \; \mbox{\f $\dis\frac{1}{N^{d-2}}$}\; \log \IP [\cA^0_N] \ge - \ov{J}_{u,\nu}, \; \mbox{where} \label{0.10}
\\[1ex]
&\ov{J}_{u,\nu} = \min \Big\{\mbox{\f $\dis\frac{1}{2d}$}  \;\dis\int_{\IR^d} | \nabla \varphi |^2 dz; \; \varphi \ge 0, \varphi \in D^1 (\IR^d), \; \strokedint_D \ov{\theta}_0\big((\sqrt{u} + \varphi)^2\big) \,dz \ge \nu\Big\}, \label{0.11}
\end{align}

\n
with $\strokedint_D \dots$ the normalized integral $\frac{1}{|D|} \int_D \dots$, $|D| = 2^d$ the Lebesgue measure of $D$, and $D^1(\IR^d)$  the space of locally integrable functions $f$ on $\IR^d$ with finite Dirichlet energy that decay at infinity, i.e.~such that $\{|f| > a\}$ has finite Lebesgue measure for $a > 0$, see Chapter 8 of \cite{LiebLoss01}.

\medskip
Our main result in this work is Theorem \ref{theo4.1} that we describe further below. Its principal application is Theorem \ref{theo5.1}, which states that for  $0 < u < \ov{u}$ and $\nu \in [\theta_0(u), 1)$,
\begin{align}
& \limsup\limits_N \; \mbox{\f $\dis\frac{1}{N^{d-2}}$} \;\log \IP [\cA_N] \le - \wh{J}_{u,\nu}, \; \mbox{where} \label{0.12}
\\[1ex]
&\wh{J}_{u,\nu} = \min \Big\{\mbox{\f $\dis\frac{1}{2d}$}  \;\dis\int_{\IR^d} | \nabla \varphi |^2 dz; \; \varphi \ge 0, \varphi \in D^1 (\IR^d), \; \strokedint_D \wh{\theta}\big((\sqrt{u} + \varphi)^2\big) \,dz \ge \nu\Big\}, \label{0.13}
\end{align}
(the existence of minimizers is shown as in Theorem 2 of \cite{Szni21}).

\medskip
In particular, if as expected the equality $\ov{u} = u_*$ holds, the combination of (\ref{0.10}) and (\ref{0.13}) proves that
\begin{equation}\label{0.14}
\begin{array}{l}
\lim\limits_N  \mbox{\f $\dis\frac{1}{N^{d-2}}$}\; \log \IP [\cA^0_N] = \lim\limits_N \; \mbox{\f $\dis\frac{1}{N^{d-2}}$}\, \log \IP [\cA_N] = - \ov{J}_{u,\nu},
\\[1ex]
\mbox{for $0 < u < u_*$ and $\nu \in [\theta_0(u),1)$} .
\end{array}
\end{equation}

\n
As often the case for large deviation asymptotics, the derivation of the lower bound involves a ``scenario that produces the deviant event of interest''. This is indeed the case in (\ref{0.10}), which is proved through the change of probability method. In this light the quantity $(\sqrt{u} + \varphi)^2 (\frac{\cdot}{N})$ can heuristically be interpreted as the slowly varying level of {\it tilted interlacements} (see \cite{LiSzni14}) that appear in the derivation of the lower bound (see Section 4 and Remark 6.6 2) of \cite{Szni21a}). The minimizers in (\ref{0.11}) do not exceed the value $\sqrt{u}_* - \sqrt{u}$, see Theorem 2 of \cite{Szni21}. The regions where the minimizers of (\ref{0.11}) reach the value $\sqrt{u}_* - \sqrt{u}$, if they exist, could reflect the presence of droplets secluded by the interlacements that might share the burden of creating an excess fraction of volume of disconnected points in $D_N$. This occurrence, which does not happen when $\nu$ is small, see \cite{Szni21b}, but might take place when $\nu$ is close to $1$, would exhibit some similarity to the Wulff droplet in the case of Bernoulli percolation or for the Ising model, see in particular Theorem 2.12 of \cite{Cerf00}, and also \cite{Bodi99}. Additional information on the behavior of $\theta_0$ near $u_*$ would be helpful in this matter.  We also refer to Remark \ref{rem5.2} 3).

\medskip
Theorem \ref{theo5.1} that proves (\ref{0.12}) is a significant improvement on the main Theorem 4.3 of \cite{Szni21b}. It replaces the function $\theta^*$ from \cite{Szni21b} (defined similarly as $\wh{\theta}$ with $(\sqrt{u} + c_0 (\sqrt{\ov{u}} - \sqrt{u}))^2 < \ov{u}$ playing the role of $\ov{u}$) by the smaller function $\wh{\theta}$ in the variational problem (\ref{0.13}). This not only leads to a sharper asymptotic upper bound, but also if the equality $\ov{u} = u_*$ holds, to the true exponential rate of decay. Informally, Theorem \ref{theo5.1} replaces the dimension dependent constant $c_0 \in (0,1)$ from \cite{Szni21b} by the value $1$.

\medskip
In this light Theorem \ref{theo4.1} is the central result of the present article. It subsumes Theorem 3.1 of \cite{Szni21b}. Theorem \ref{theo4.1} pertains to the construction of a coarse grained random set $C_\omega$, which plays a crucial role in the assignment of a cost to the bubble set $Bub$, see (\ref{1.26}). Following \cite{Szni21b}, the bubble set is obtained by paving $D_N$ with boxes $B_1$ of size $L_1 \simeq N^{2/d}$ and retaining those $B_1$-boxes that are not met ``deep inside'' by a random set $\cU_1$, see (\ref{1.22}). This random set $\cU_1$, following \cite{NitzSzni20}, is obtained by an exploration starting outside $[-3N,3N]^d$ with $B_0$-boxes of size $L_0 \simeq N^{\frac{1}{d-1}}$ (much smaller than $L_1$) that are ``good'' (so-called $(\alpha, \beta, \gamma)$-good, where $\alpha > \beta > \gamma$ lie in $(u,\ov{u})$ and should be thought of as being close to $\ov{u}$), and have a ``local level'' below $\ov{u}$ (actually below $\beta$). The random set $\cU_1$ brings along a profusion of ``highways'' in the vacant set $\cV^u$ and permits to exit $[-2N, 2N]^d$ and thus reach $S_{2N}$ when starting in $D_N$.

\begin{center}
\psfrag{DN}{$D_N$}
\includegraphics[width=6cm]{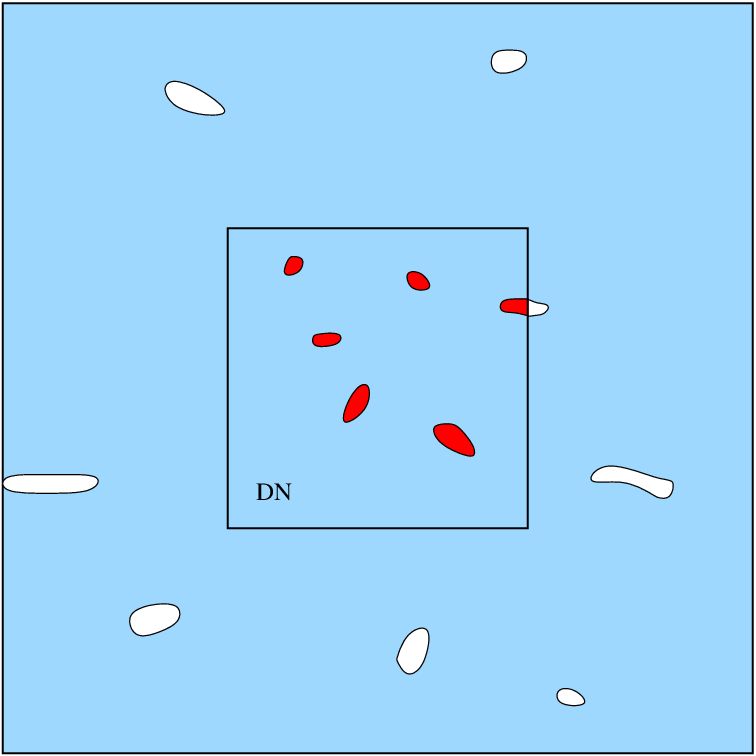}
\end{center}

\medskip
\begin{center}
\begin{tabular}{ll}
Fig.~2: & An informal illustration of the bubble set $B u b$ from (\ref{1.26}) in red. \\
& The light blue region consists of $B_1$-boxes where the random \\
& set $\cU_1$ in (\ref{1.22}) enters deeply enough in $B_1$.
\end{tabular}
\end{center}

\n
The bubble set is quite irregular and lacks inner depth. In Theorem \ref{theo4.1} we obtain a pivotal improvement on the results of Section 3 of \cite{Szni21b}. Given an arbitrary $a \in (0,1)$ we construct a random set $C_\omega$ contained in $[-4N, 4N]^d$. The set $C_\omega$ can take at most $\exp\{o(N^{d-2})\}$ possible shapes,  further, when thickened in scale $L_1$ it has a small volume compared to $|D_N|$, it is made of well-spaced $B_0$-boxes that are $(\alpha,\beta,\gamma)$-good with local level bigger or equal to $\beta$, and (see (\ref{4.5}) v)):
\begin{equation}\label{0.15}
\begin{array}{l}
\mbox{the set of points in the bubble set $Bub$, where the equilibrium potential $h_{C_\omega}$}\\
\mbox{of $C_\omega$ takes a value smaller than $a$ has small volume relative to $|D_N|$}.
\end{array}
\end{equation}

\n
The crucial difference, when compared to Section 3 of \cite{Szni21b}, is that we can now choose $a$ in (\ref{0.15}) arbitrarily close to $1$, whereas in \cite{Szni21b} $a$ was at most equal to the dimensional constant $c_0 \in (0,1)$ of \cite{Szni21b}. We refer to the discussion below Theorem \ref{theo4.1} for an outline of its proof. Quite interestingly the construction of the coarse grained random set $C_\omega$ brings in full swing both the {\it method of enlargement of obstacles}, specifically the capacity and volume estimates from Chapter 4 \S 3 of \cite{Szni98a}, or Section 2 of \cite{Szni97a}, as well as the {\it resonance sets} of \cite{NitzSzni20}, \cite{ChiaNitz}. We refer to Sections 2 and 3 for an implementation of these concepts in the present context.

\medskip
Coming back to Theorem \ref{theo5.1}, its proof can be adapted to the case where one replaces $\cC^u_{2N}$ by $\cC^u_{mN}$ with $m > 2$ integer in the definition of $\cA_N$  (hence leading to a larger event). However, the case where $\cC^u_{2N}$ is replaced by $\cC^u_\infty$ (the infinite cluster) remains open, see Remark \ref{rem5.2} 2). Further, Theorem \ref{theo5.1} has a direct application to the simple random walk. Informally, the simple random walk corresponds to the singular limit $u \r 0$ for random interlacements, see Section 7 of \cite{Szni15} or the end of Section 6 of \cite{Szni17}. In Corollary \ref{cor5.3} we obtain an asymptotic upper bound on the exponential rate of decay of the probability that the trajectory of the simple random walk disconnects a positive fraction $\nu$ of sites of $D_N$ from $S_{2N}$. It is an open question whether a matching asymptotic lower bound holds as well, see Remark \ref{rem5.4}.

\medskip
We should also mention that combined with \cite{Szni15}  the methods developed here and in \cite{Szni21b} ought to be pertinent to handle similar questions in the context of the level-set percolation of the Gaussian free field. Denoting by $\phi$ the Gaussian free field on $\IZ^d$, $d \ge 3$, and for $h$ in $\IR$ by $\cC^{\ge h}_{2N}$ the connected component of $S_{2N}$ in $\{\phi \ge h\} \cup S_{2N}$, one would now look at the points of $D_N$ that get disconnected from $S_{2N}$ by the sub-level set $\{\phi < h\}$, namely $D_N\, \backslash \,\cC^{\ge h}_{2N}$. In this model, the critical value $h_*$ for the percolation of $\{\phi \ge h\}$ lies in $(0,\infty)$, see \cite{DrewPrevRodr18}, and the value $\ov{h} \le h_*$ from \cite{Szni15} corresponding to the strong percolation of $\{\phi \ge h\}$ when $h < \ov{h}$, is known to coincide with $h_*$ by \cite{{DumiGoswRodrSeve20}}. Thus for $h < h_*$ and $\nu \in [\theta_0^{G} (h), 1)$ (with $\theta_0^{G} (v) = \IP^{G} [0\; {\phiv} \infty]$ for $v \in \IR$), one would expect the asymptotics
\begin{equation}\label{0.15a}
\begin{array}{l}
\lim\limits_N \; \mbox{\f $\dis\frac{1}{N^{d-2}}$} \; \log \IP^{G} [\{|D_N \, \backslash \, \cC^{\ge h}_{2N}| \ge \nu \,|D_N|\}] = -\ov{J}\,^{G}_{h,\nu}, \; \mbox{with}
\\[2ex]
\ov{J}\,^{G}_{h,\nu} = \min\Big\{ \mbox{\f $\dis\frac{1}{4d}$} \; \dis\int_{\IR^d} \;|\nabla \varphi |^2 \, dz; \; \varphi \ge 0, \varphi \in D^1(\IR^d), \, \dis\strokedint_D \;\ov{\theta}\,^{G}_{0}  (h + \varphi) \,dz \ge \nu\Big\}
\end{array}
\end{equation}

\n
where $\ov{\theta}\,^{G}_{0}$ stands for the right-continuous modification of $\theta^{G}_{0}$ (which only possibly differs from $\theta^G_0$ at $h_*$, see Lemma A.1 of \cite{Abac19}, and is expected to coincide with $\theta^{G}_{0}$).

\medskip
We will now describe the organization of the article. Section 1 collects some notation and recalls various facts about the simple random walk, potential theory, and random interlacements. It also recalls some lemmas from \cite{Szni21b}, in particular Lemmas 1.1 and 1.2 which will be used in Chapter 4 in the construction of $C_\omega$ (Lemma 1.2 is actually based on \cite{AsseScha20}). Section 2 develops {\it capacity and volume estimates} for certain {\it rarefied boxes} originating from the method of enlargement of obstacles, see Chap. 4 \S 3 of \cite{Szni98a} or Section 2 of \cite{Szni97a}. Section 3 contains an adaptation to our set-up of controls attached to the {\it resonance sets} developed in \cite{NitzSzni20} and \cite{ChiaNitz}. Section 4 is the main body of the article. It develops the construction of the coarse grained random set $C_\omega$, see Theorem \ref{theo4.1}. Then Section 5 builds on Theorem \ref{theo4.1} and the results in Section 4 of \cite{Szni21b}. Theorem \ref{theo5.1} proves the main asymptotic upper bound corresponding to (\ref{0.12}). The application to the simple random walk is contained in Corollary \ref{cor5.3}. The Appendix sketches the proof of Proposition \ref{prop3.1}, which pertains to the resonance sets considered here.

\medskip
Finally, let us state our convention concerning constants. Throughout the article we denote by $c, \wt{c}, c'$ positive constants changing from place to place that simply depend on the dimension $d$. Numbered constants such as $c_0,c_1,c_2,\dots$ refer to the value corresponding to their first appearance in the text. The dependence on additional parameters appears in the notation.

\bigskip\n
{\bf Acknowledgements:} The author wishes to thank Alessio Figalli for his very helpful comments about the minimizers of the variational problem in (\ref{0.11}) and the issue of knowing whether they reach the maximum value $\sqrt{u}_* - \sqrt{u}$ for $\nu$ close to $1$, see also Remark \ref{rem5.2} 3).

\section{Notation, useful results, and random sets}
\setcounter{equation}{0}

In this section we introduce further notation. We also collect several facts concerning random walks, potential theory, and random interlacements. We introduce the length scales $L_0$ and $L_1$ see (\ref{1.9}), (\ref{1.10}), as well as the random sets $\cU_1, \cU_0 = \IZ^d \backslash \cU_1$, see (\ref{1.22}), (\ref{1.23}), and the ``bubble set'' $B u b$, see (\ref{1.26}). They play an important role in this article. We also recall Lemmas 1.1 and 1.2 from \cite{Szni21b} (Lemma 1.2 is in essence due to \cite{AsseScha20}): they will enter the construction of the random set $C_\omega$ in Section 4.

\medskip
We begin with some notation. We denote by $\IN = \{0,1,2, \dots\}$ the set of non-negative integers and by $\IN_* = \{1,2, \dots\}$ the set of positive integers. For $(a_n)_{n \ge 1}$ and $(b_n)_{n \ge 1}$ positive sequences, $a_n \gg b_n$ or $b_n = o(a_n)$ means that $\lim_n b_n / a_n = 0$. We write $|\cdot |_1$, $|\cdot |$, and $|  \cdot |_\infty$ for the $\ell_1$-norm, the Euclidean norm, and the supremum norm on $\IR^d$. Throughout we tacitly assume that $d \ge 3$. Given $x \in \IZ^d$ and $r \ge 0$, we let $B(x,r) = \{y \in \IZ^d; |y - x|_\infty \le r\}$ stand for the closed ball of radius $r$ around $x$ in the supremum distance. Note that $D_N$ in (\ref{0.5}) coincides with $B(0,N)$.  Given $L \ge 1$ integer, we say that a subset $B$ of $\IZ^d$ is a box of size $L$ if $B = x + \IZ^d \cap [0, L)^d$ for an $x$ in $\IZ^d$ (which actually is unique). We write $x_B$ for this $x$ and refer to it as the base point of $B$. We sometimes write $B = x_B + [0,L)^d$ in place of $x_B + \IZ^d \cap [0,L)^d$ when no confusion arises. We write $A \subset\subset \IZ^d$ to state that $A$ is a finite subset of $\IZ^d$. We denote by $\partial A = \{y = \IZ^d \backslash A$; $\exists x \in A$, $|y - x| = 1\}$ and $\partial_i \,A = \{x \in A$; $\exists y \in \IZ^d \backslash A$; $|y - x| = 1\}$ the boundary and the inner boundary of $A$. When $f,g$ are functions on $\IZ^d$, we write $\langle f,g \rangle = \Sigma_{x \in \IZ^d} f(x) \ g(x)$ when the sum is absolutely convergent. We also use the notation $\langle \rho, f \rangle$ for the integral of a function $f$ (on an arbitrary space) with respect to a measure $\rho$ when this quantity is meaningful.

\medskip
Concerning connectivity properties, we say that  $x,y$ in $\IZ^d$ are neighbors when $|y - x| = 1$ and we call $\pi: \{0,\dots , n\} \rightarrow \IZ^d$ a path when $\pi(i)$ and $\pi(i-1)$ are neighbors for $1 \le i \le n$. For $A,B,U$ subsets of $\IZ^d$ we say that $A$ and $B$ are connected in $U$ and write $A \stackrel{U}{\longleftrightarrow} B$ when there is a path with values in $U$, which starts in $A$ and ends in $B$. When no such path exists we say that $A$ and $B$ are not connected in $U$ and we write $A  \U B$.

\medskip
We then recall some notation concerning the continuous-time simple random walk. For $U \subseteq \IZ^d$, we write $\Gamma(U)$ for the set of right-continuous, piecewise constant functions from $[0,\infty)$ to $U \cup \partial U$ with finitely many jumps on any finite interval that remain constant after their first visit to $\partial U$. We denote by $(X_t)_{t \ge 0}$ the canonical process on $\Gamma(U)$. For $U \subset \subset \IZ^d$ the space $\Gamma(U)$ conveniently carries the law of certain excursions contained in the trajectories of the interlacements. We also view the law of $P_x$ of the continuous-time simple random walk on $\IZ^d$ with unit jump rate, starting at $x \in \IZ^d$, as a measure on $\Gamma (\IZ^d)$. We write $E_x$ for the corresponding expectation. We denote by $(\cF_t)_{t \ge 0}$ the canonical right-continuous filtration, and by $(\theta_t)_{t \ge 0}$ the canonical shift on $\Gamma(\IZ^d)$. Given $U \subseteq \IZ^d$, we write $H_U = \inf\{t \ge 0; X_t \in U\}$ and $T_U = \inf\{t \ge 0; X_t \notin U\}$ for the respective entrance time in $U$ and exit time from  $U$. Further, we let $\wt{H}_U$ stand for the hitting time of $U$, that is, the first time after the first jump of $X_\point$ when $X_\point$ enters $U$.

\medskip
We write $g(\cdot,\cdot)$ for the Green function of the simple random walk and $g_U(\cdot,\cdot)$ for the Green function of the walk killed upon leaving $U (\subseteq \IZ^d)$:
\begin{equation}\label{1.1}
g(x,y) = E_x \Big[\dis\int^\infty_0 1\{X_s = y\} \,ds\Big], \; g_U(x,y) = E_x \Big[\dis\int^{T_U}_0 1\{X_s = y\} \,ds\Big], \; \mbox{for $x,y \in \IZ^d$}.
\end{equation}

\n
Both $g(\cdot,\cdot)$ and $g_U(\cdot,\cdot)$ are known to be finite and symmetric, and $g_U(\cdot,\cdot)$ vanishes if one of its arguments does not belong to $U$. When $f$ is a function on $\IZ^d$ such that $\Sigma_{y \in \IZ^d}\, g(x,y) \,|f(y)| < \infty$ for all $x \in \IZ^d$, we write 
\begin{equation}\label{1.2}
G f(x) = \dis\sum\limits_{y \in \IZ^d} g(x,y) \,f(y), \; \mbox{for $x \in \IZ^d$}.
\end{equation}

\n
Likewise, when $f$ is a function on $\IZ^d$ such that $\Sigma_{y \in \IZ^d} \,g_U(x,y) \,|f(y)| < \infty$ for all $x$ in $\IZ^d$, we define $G_U f(x)$ in an analogous fashion with $g_U(\cdot,\cdot)$ in place of $g(\cdot,\cdot)$.

\medskip
Due to translation invariance $g(x,y) = g(x-y, 0)$ and one knows that $g(x,y) \sim \frac{d}{2} \,\Gamma(\frac{d}{2} - 1) \,\pi^{- d/2} |y - x|^{2-d}$, as $|y - x| \rightarrow \infty$ (see Theorem 1.5.4, p.~31 of \cite{Lawl91}). We denote by $c_*$ the positive constant
\begin{equation}\label{1.3}
c_* = \sup\{g(x,y) \,|y - x|^{d-2}; \,x,y \in \IZ^d\} \in (0,\infty).
\end{equation}
We will also use the fact that for a suitable dimension dependent constant $c_\Delta > 0$
\begin{equation}\label{1.4}
\mbox{for all $R \ge 1$ and $x,y \in B(0,R), \;\; g(x,y) \ge g_{B(0,2R)} (x,y) \ge c_\Delta\, g(x,y)$}
\end{equation}
(this follows for instance by adapting the proof of Theorem 4.26 a) on p.~121 of \cite{Barl17}).

\medskip
Given $A \subset \subset \IZ^d$, we write $e_A$ for the equilibrium measure of $A$, and ${\rm cap}(A)$ for its total mass, the capacity of $A$. So $e_A(x) = P_x [\wt{H}_A = \infty] \,1_A (x)$, for $x \in \IZ^d$, and $e_A$ is supported by the inner boundary of $A$. Further, one knows that
\begin{align}
&\mbox{$G e_A = h_A$, where $h_A(x) = P_x [H_A < \infty]$, $x \in \IZ^d$, is the equilibrium potential of $A$} \label{1.5}\\
&\mbox{(in particular $P_x[H_A < \infty] \ge {\rm cap}(A) \,\inf\limits_A g(x,\cdot)$)}. \nonumber
\end{align}

\n
At the end of Section 4 we will use an identity generalizing (\ref{1.5}) to the case of the simple random walk killed outside $U \subseteq \IZ^d$ and $A$ finite subset of $U$. Then, one has $h_{A,U} = G_U \,e_{A,U}$, where $h_{A,U} (x) = P_x [H_A < T_U]$, for $x \in \IZ^d$, and $e_{A,U} (x) = P_x[T_U \le \wt{H}_A] \,1_A(x)$, for $x \in \IZ^d$.

\medskip
In the case of a box $B = [0,L)^d$ one knows (see for instance \cite{Lawl91}, p.~31) that
\begin{equation}\label{1.6}
c\,L^2 \le G \,1_B(x) \le c' \,L^2, \;\mbox{for $x \in B$ and $L \ge 1$},
\end{equation}
as well as (see (2.16) on p.~53 of \cite{Lawl91})
\begin{equation}\label{1.7}
c\,L^{d-2} \le {\rm cap} (B) \le c' \,L^{d-2}, \; \mbox{for $L \ge 1$}.
\end{equation}
We will also need the constant
\begin{equation}\label{1.8}
\wh{c} = \sup\limits_{L \ge 1} \; {\rm cap}([0,L)^d) / L^{d-2}.
\end{equation}

\n
We now introduce some length-scales. Apart from the macroscopic scale $N$ that governs the size of the box $D_N$ in (\ref{0.5}), two other length-scales play an important role:
\begin{align}
L_0 & = [N^{\frac{1}{d-1}}], \;\mbox{and} \label{1.9}
\\
L_1 & = k_N \, L_0, \; \mbox{where $k_N$ is the integer such that $k_N\, L_0 \le N^{\frac{2}{d}} (\log N)^{\frac{1}{d}} < (k_N + 1) \,L_0$}. \label{1.10}
\end{align}

\n
One has $\frac{1}{d-1} < \frac{2}{d}$ so that $k_N \r \infty$, and $L_1/ L_0 \underset{N}{\longrightarrow} \infty$, with $L_0 \sim N^{\frac{1}{d-1}}$ and $L_1 \sim N^{\frac{2}{d}} (\log N)^{\frac{1}{d}}$, as $N \r \infty$.

\medskip
We call $B_0$-box (or sometimes $L_0$-box) any box of the form
\begin{equation}\label{1.11}
B_{0,z} = z + [0,L_0)^d, \; \mbox{where $z \in \IL_0 \stackrel{\rm def}{=} L_0 \, \IZ^d$}.
\end{equation}
We often write $B_0$ to refer to a generic box $B_{0,z}$, $z \in \IL_0$ (so $z = x_{B_0}$ will be the base point of $B_0$). Likewise we call $B_1$-box (or $L_1$-box) any box of the form
\begin{equation}\label{1.12}
\mbox{$B_{1,z} = z + [0,L_1)^d$, where $z \in \IL_1 \stackrel{\rm def}{=} L_1 \, \IZ^d$ ($\subseteq \IL_0$ by (\ref{1.9}), (\ref{1.10}))},
\end{equation}
and denote by $B_1$ a generic box $B_{0,z}, z \in \IL_1$ (so that $z = x_{B_1}$ is the base point of $B_1$).

\medskip
We recall two lemmas from \cite{Szni21b}. They play an important role in the proof of the main Theorem \ref{theo4.1}, where the coarse grained random set $C_\omega$ is constructed. The combination of isoperimetric controls and the first lemma will typically enable us to extract well-spaced (good) $B_0$-boxes in the boundary of the random set $\cU_1$ (see below (\ref{1.23})), which carry substantial capacity, see (\ref{4.79}), (\ref{4.81}), (\ref{4.84}). In the statement of the next lemma, and throughout the article, the terminology {\it coordinate projection} refers to any of the $d$ canonical projections on the respective hyperplanes of points with vanishing $i$-th coordinate.

\begin{lemma}\label{lem1.1}
Given $K \ge 100 d$, $a \in (0,1)$, then for large $N$, for any box $B$ of size $L \ge L_1$ and any set $A$ union of $B_0$-boxes contained in $B$ such that for a coordinate projection $\pi$ one has
\begin{equation}\label{1.13}
|\pi (A)| \ge a  \,|B|^{\frac{d-1}{d}} ,
\end{equation}

\medskip\n
one can find a subset $\wt{A}$ of $A$, which is a union of $B_0$-boxes having base points with respective $\pi$-projections at mutual $| \cdot |_\infty$-distance at least $\ov{K} \,L_0$ (with $\ov{K} = 2 K + 3)$, and such that
\begin{equation}\label{1.14}
{\rm cap}(\wt{A}) \ge c(a) \,|B|^{\frac{d-2}{d}} \; \mbox{and} \;\; | \pi (\wt{A}) | \ge \ov{K}\,^{-(d-1)} | \pi (A) |\, .
\end{equation}
\end{lemma}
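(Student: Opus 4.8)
The plan is to build the set $\wt{A}$ in two steps. A purely combinatorial sparsification secures the spacing of the $\pi$-projections of base points together with the lower bound $|\pi(\wt{A})|\ge\ov{K}^{-(d-1)}|\pi(A)|$, and then an energy computation delivers the capacity lower bound; here $\pi$ is the coordinate projection from (\ref{1.13}), and I write $L$ for the size of $B$, so $|B|=L^{d}$ and $|B|^{(d-1)/d}=L^{d-1}$. For the sparsification, note that a $B_0$-box $B_{0,z}\subseteq A$ has $\pi$-image the $(d-1)$-cube $\pi(z)+[0,L_0)^{d-1}$ with $\pi(z)\in L_0\IZ^{d-1}$, and two such boxes share their $\pi$-image exactly when they lie in a common ``$\pi$-column''. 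Writing $\Pi$ for the set of $\pi$-columns met by $A$, we get $|\pi(A)|=|\Pi|\,L_0^{d-1}$, hence $|\Pi|\ge a\,(L/L_0)^{d-1}$ by (\ref{1.13}). Pick one $B_0$-box $C_q\subseteq A$ in each column $q\in\Pi$, and partition $\Pi$ into $\ov{K}^{\,d-1}$ classes according to the residues $\bmod\,\ov{K}$ of the $d-1$ coordinates of $\pi(x_{C_q})/L_0$. By pigeonhole some class $\Pi_0$ has $|\Pi_0|\ge\ov{K}^{-(d-1)}|\Pi|$; set $\wt{A}:=\bigcup_{q\in\Pi_0}C_q$, a union of $B_0$-boxes contained in $A$. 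Any two distinct base points among $\{x_{C_q}:q\in\Pi_0\}$ agree $\bmod\,\ov{K}L_0$ in every coordinate, so their $\pi$-projections differ in some coordinate by a nonzero multiple of $\ov{K}L_0$, hence are at $|\cdot|_\infty$-distance $\ge\ov{K}L_0$; and $|\pi(\wt{A})|=|\Pi_0|\,L_0^{d-1}\ge\ov{K}^{-(d-1)}|\pi(A)|$, which is the second bound in (\ref{1.14}).

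For the capacity, write $\wt{A}=\bigsqcup_{i=1}^{m}A_i$ with $A_i$ the chosen $B_0$-boxes, so $m\,L_0^{d-1}=|\pi(\wt{A})|\ge\ov{K}^{-(d-1)}\,a\,L^{d-1}$. I use the variational characterization ${\rm cap}(\wt{A})=\bigl(\inf_{\rho}\langle\rho,G\rho\rangle\bigr)^{-1}$, the infimum over probability measures supported on $\wt{A}$; taking $\rho=\mu/\|\mu\|_1$ with $\mu:=\sum_{i=1}^{m}e_{A_i}$ (supported on $\wt{A}$) gives ${\rm cap}(\wt{A})\ge\|\mu\|_1^{2}/\langle\mu,G\mu\rangle$. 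Now $\|\mu\|_1=\sum_i{\rm cap}(A_i)$, and since $G\,e_{A_j}=h_{A_j}$ by (\ref{1.5}) and $h_{A_i}\equiv1$ on $A_i$, one has $\langle\mu,G\mu\rangle=\sum_i{\rm cap}(A_i)+\sum_{i\ne j}\langle e_{A_i},h_{A_j}\rangle$. By (\ref{1.7})--(\ref{1.8}), $\|\mu\|_1$ and the diagonal sum are both of order $m\,L_0^{d-2}$. For the off-diagonal sum, using $h_{A_j}=G\,e_{A_j}$, ${\rm cap}(A_j)\le\wh{c}\,L_0^{d-2}$, the bound $g(x,y)\le c_*|x-y|^{2-d}$ from (\ref{1.3}), and the fact that two chosen boxes whose projected base points are at $|\cdot|_\infty$-distance $r_{ij}$ ($\ge\ov{K}L_0$) lie at Euclidean distance $\ge r_{ij}/2$, one gets $\langle e_{A_i},h_{A_j}\rangle\le c\,L_0^{2(d-2)}\,r_{ij}^{2-d}$. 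Summing over $j\ne i$ by a dyadic splitting of the scales $r_{ij}\in[\ov{K}L_0,L)$, with the packing bound $\#\{j:r_{ij}\le R\}\le c\,(R/(\ov{K}L_0))^{d-1}$ supplied by the spacing, yields $\sum_{j\ne i}\langle e_{A_i},h_{A_j}\rangle\le c\,\ov{K}^{\,1-d}L_0^{d-3}L$, hence $\sum_{i\ne j}\langle e_{A_i},h_{A_j}\rangle\le c\,\ov{K}^{\,1-d}m\,L_0^{d-3}L$.

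Since $L\ge L_1$ and $L_1/L_0\to\infty$, for $N$ large (with $K$ fixed first) the off-diagonal part dominates, so $\langle\mu,G\mu\rangle\le c\,\ov{K}^{\,1-d}m\,L_0^{d-3}L$, whence ${\rm cap}(\wt{A})\ge c\,\ov{K}^{\,d-1}\,m\,L_0^{d-1}/L$. Inserting $m\,L_0^{d-1}\ge\ov{K}^{-(d-1)}a\,L^{d-1}$, the powers of $\ov{K}$ cancel and ${\rm cap}(\wt{A})\ge c\,a\,L^{d-2}=c(a)\,|B|^{(d-2)/d}$, the first bound in (\ref{1.14}). The one genuinely delicate point is exactly this cancellation: the loss $\ov{K}^{-(d-1)}$ in the number of retained boxes is compensated precisely by the gain $\ov{K}^{\,d-1}$ in the off-diagonal energy estimate coming from the imposed spacing, and this is what makes $c(a)$ independent of $K$. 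The hypothesis $K\ge100d$ and the passage to large $N$ merely serve to make the packing and distance estimates, and the domination of the off-diagonal term, comfortable; the remainder is bookkeeping with (\ref{1.3}), (\ref{1.5}), (\ref{1.7}), (\ref{1.8}).
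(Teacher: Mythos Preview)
The paper does not prove this lemma here; it is recalled from \cite{Szni21b} (see the sentence ``We recall two lemmas from \cite{Szni21b}'' preceding the statement). Your argument is correct and self-contained: the pigeonhole sparsification on residues of $\pi(x_{C_q})/L_0$ modulo $\ov{K}$ secures both the spacing and the bound $|\pi(\wt{A})|\ge\ov{K}^{-(d-1)}|\pi(A)|$, and the energy estimate via the variational characterization of capacity, with the dyadic summation of the off-diagonal terms controlled by the packing bound, delivers ${\rm cap}(\wt{A})\ge c\,a\,L^{d-2}$ with the key cancellation of $\ov{K}^{\,d-1}$ that makes $c(a)$ independent of $K$. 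The order of quantifiers (first fix $K$, then take $N$ large so that $\ov{K}^{\,1-d}L_1/L_0>1$ and the off-diagonal term dominates) is exactly what the statement allows.
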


The second lemma corresponds to Lemma 1.2 of \cite{Szni21b}, and is a direct application of Theorem 1.4 of \cite{AsseScha20}. It will be used at the end of the proof of Proposition \ref{prop4.2} to extract small collections of $B_0$-boxes with substantial capacity, see (\ref{4.97}).

\begin{lemma}\label{lem1.2} $(\ov{K} = 2K + 3)$

\medskip\n
For $K, N \ge c_1$, when $\wt{A}$ is a union of $B_0$-boxes with base points that are at mutual $| \cdot |_\infty$-distance at least $\ov{K} L_0$, then there exists a union of $B_0$-boxes $A' \subseteq \wt{A}$ such that
\begin{equation}\label{1.15}
{\rm cap}(A') \ge c \;{\rm cap}(\wt{A}) \;\; \mbox{and} \;\; \dis\frac{|A'|}{|B_0|} \le c' \dis\frac{{\rm cap}(\wt{A})}{{\rm cap} (B_0)} \;.
\end{equation}
\end{lemma}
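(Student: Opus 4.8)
The idea is to recast the claim as a statement about a configuration of well separated \emph{points}, apply Theorem 1.4 of \cite{AsseScha20} there, and transfer the conclusion back to $B_0$-boxes via a capacity comparison that the $\ov{K} L_0$-separation renders uniform. First I would blow down: write $\Lambda = \{x_{B_0}/L_0 : B_0 \subseteq \wt{A}\} \subseteq \IZ^d$ for the rescaled collection of base points, so that the hypothesis on $\wt{A}$ says exactly that the points of $\Lambda$ are at mutual $|\cdot|_\infty$-distance at least $\ov{K}$. For $\Lambda' \subseteq \Lambda$ let $F(\Lambda') = \bigcup_{z \in \Lambda'} (L_0 z + [0,L_0)^d)$ be the corresponding union of (pairwise disjoint) $B_0$-boxes, so $\wt{A} = F(\Lambda)$. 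The key deterministic input is the two-sided estimate
\[
c_2 \, L_0^{d-2}\, {\rm cap}(\Lambda') \;\le\; {\rm cap}\big(F(\Lambda')\big) \;\le\; c_3 \, L_0^{d-2}\, {\rm cap}(\Lambda'), \qquad \Lambda' \subseteq \Lambda,
\]
with $c_2, c_3$ dimensional, valid once $\ov{K}$ exceeds a dimensional constant (this is what the requirement $K \ge c_1$ secures).

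I would prove this comparison by the energy method. For the upper bound on ${\rm cap}(F(\Lambda'))$, decompose an arbitrary probability measure $\nu$ on $F(\Lambda')$ as $\nu = \sum_{z \in \Lambda'} \nu_z$, with $\nu_z$ supported in the box at $L_0 z$ and of total mass $p_z$. The diagonal terms satisfy $\langle \nu_z, G\nu_z \rangle \ge p_z^2/{\rm cap}([0,L_0)^d) \ge p_z^2/(\wh{c}\, L_0^{d-2})$ by (\ref{1.8}) and the variational characterization of capacity, while for $z \neq w$, using $|x-y| \le 2 L_0 |z-w|$ for $x$ in the box at $L_0 z$ and $y$ in the box at $L_0 w$ together with the Green-function asymptotics recalled before (\ref{1.3}) (which give $g(x,y) \ge c\,|y-x|^{2-d}$ and $g(z,w) \le c_* |z-w|^{2-d}$), one gets $\langle \nu_z, G\nu_w \rangle \ge c\, L_0^{2-d}\, g(z,w)\, p_z p_w$. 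Summing, $\langle \nu, G\nu \rangle \ge c\, L_0^{2-d} \langle p, Gp \rangle \ge c\, L_0^{2-d}/{\rm cap}(\Lambda')$, whence ${\rm cap}(F(\Lambda')) \le c^{-1} L_0^{d-2}\,{\rm cap}(\Lambda')$. The lower bound is symmetric: spreading the normalized equilibrium measure $e_{\Lambda'}/{\rm cap}(\Lambda')$ over the boxes according to the normalized equilibrium measures of the individual boxes yields a test measure on $F(\Lambda')$ whose energy is at most $c\, L_0^{2-d}/{\rm cap}(\Lambda')$, this time using ${\rm cap}([0,L_0)^d) \ge c\, L_0^{d-2}$ from (\ref{1.7}), the bound $g(x,y) \le c_* |y-x|^{2-d}$, the inequality $|x-y| \ge \tfrac12 L_0 |z-w|$ (valid once $\ov{K}$ is large), and the matching lower bound $g(z,w) \ge c\,|z-w|^{2-d}$.

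With this comparison in hand the lemma is immediate. Applying Theorem 1.4 of \cite{AsseScha20} to the finite set $\Lambda$ produces a subset $\Lambda' \subseteq \Lambda$ with ${\rm cap}(\Lambda') \ge c\,{\rm cap}(\Lambda)$ and $|\Lambda'| \le c'\,{\rm cap}(\Lambda)$, in the normalization where a single point has capacity of order one. Set $A' = F(\Lambda')$, a union of $B_0$-boxes contained in $\wt{A}$. Using the displayed comparison for $\Lambda'$ and for $\Lambda$,
\[
{\rm cap}(A') \ge c_2\, L_0^{d-2}\,{\rm cap}(\Lambda') \ge c\, L_0^{d-2}\,{\rm cap}(\Lambda) \ge c\,{\rm cap}\big(F(\Lambda)\big) = c\,{\rm cap}(\wt{A}),
\]
and, since the boxes are disjoint, $|A'|/|B_0| = |\Lambda'| \le c'\,{\rm cap}(\Lambda) \le c'\, L_0^{-(d-2)}\,{\rm cap}(F(\Lambda)) = c'\, L_0^{-(d-2)}\,{\rm cap}(\wt{A}) \le c''\,{\rm cap}(\wt{A})/{\rm cap}(B_0)$, the last step using ${\rm cap}(B_0) = {\rm cap}([0,L_0)^d) \le \wh{c}\, L_0^{d-2}$ from (\ref{1.8}). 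This is (\ref{1.15}).

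The step I expect to demand the most care is the two-sided capacity comparison: checking that blowing up an $\ov{K}$-separated point configuration by the factor $L_0$ and fattening each point into an $L_0$-box multiplies the capacity by a factor of order $L_0^{d-2}$, uniformly over configurations. As sketched, this is a routine Green-function computation once $\ov{K}$ is taken large enough, and it is precisely where the separation hypothesis enters; the substantive content — extracting from an arbitrary finite point set a subset of cardinality comparable to its capacity that still carries a definite fraction of that capacity — is exactly Theorem 1.4 of \cite{AsseScha20}, which I invoke as a black box.
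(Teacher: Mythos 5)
Your proof is correct and follows what the paper itself indicates: it cites Lemma 1.2 of \cite{Szni21b} and says it is a direct application of Theorem 1.4 of \cite{AsseScha20}, and that is exactly what you do, rescaling the $\ov{K}L_0$-separated base points to $\IZ^d$, invoking the Asselah--Schapira extraction theorem on that point configuration, and transferring back through a two-sided comparison ${\rm cap}(F(\Lambda')) \asymp L_0^{d-2}\,{\rm cap}(\Lambda')$ that the separation makes uniform; your Green-function energy computation for that comparison is sound and is where the hypotheses $K,N\ge c_1$ enter.
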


We will now collect some notation and facts concerning random interlacements. We refer to \cite{CernTeix12}, \cite{DrewRathSapo14c}, and the end of Section 1 of \cite{Szni17} for more details. The random interlacements $\cI^u$, $u \ge 0$ and the corresponding vacant sets $\cV^u = \IZ^d \backslash \cI^u$, $u \ge 0$ are defined on a probability space denoted by $(\Omega, \cA, \IP)$. In essence, $\cI^u$ corresponds to the trace left on $\IZ^d$ by a certain Poisson point process of doubly infinite trajectories modulo time-shift that tend to infinity at positive and negative infinite times, with intensity proportional to $u$. As $u$ grows $\cV^u$ becomes thinner and there is a critical value $u_* \in (0,\infty)$ such that for all $u < u_*$, $\IP$-a.s. $\cV^u$ has a unique infinite component $\cC^u_\infty$, and for $u > u_*$ all components of $\cV^u$ are finite, see \cite{Szni10}, \cite{SidoSzni09}, \cite{Teix09a}, as well as the monographs \cite{CernTeix12}, \cite{DrewRathSapo14c}.

\medskip
In this work we are mainly interested in the strongly percolative regime of $\cV^u$ that corresponds to
\begin{equation}\label{1.16}
u < \ov{u},
\end{equation}

\n
where we refer to (2.3) of \cite{Szni17} or (1.26) of \cite{Szni21b} for the precise definition of $\ov{u}$. Informally, (\ref{1.16}) corresponds to a regime of local presence and uniqueness of the infinite cluster $\cC^u_\infty$ in $\cV^u$. One knows by \cite{DrewRathSapo14a} that $\ov{u} > 0$  and that $\ov{u} \le u_*$, see (2.4), (2.6) in \cite{Szni17}. The equality $\ov{u} = u_*$ is expected but presently open. In the closely related model of the level-set percolation for the Gaussian free field, the corresponding equality has been shown in the recent work \cite{DumiGoswRodrSeve20}.

\medskip
We now introduce some additional boxes related to the length scale $L_0$, which take part in the definition of the important random set $\cU_1$ (see (1.40) of \cite{Szni21b}, and also (4.27) of \cite{NitzSzni20}, as well as (\ref{1.22}) below). Throughout $K$ implicitly satisfies
\begin{equation}\label{1.17}
K \ge 100 .
\end{equation}
We consider the boxes
\begin{equation}\label{1.18}
B_{0,z} = z + [0,L_0)^d \subseteq D_{0,z} = z + [-3 L_0, 4 L_0)^d \subseteq U_{0,z} = z + [- K L_0 + 1, K L_0 - 1)^d,
\end{equation}
with $z \in \IL_0$ ($= L_0 \IZ^d$, see (\ref{1.11})).

\medskip
Given a box $B_0$ as above and the corresponding $D_0$, we consider the successive excursions in the interlacements that go from $D_0$ to $\partial U_0$ (see (1.41) of \cite{Szni17}) and write (see (1.42) and (2.14) of \cite{Szni17}):
\begin{align}
N_v (D_0) = & \;\mbox{the number of excursions from $D_0$ to $\partial U_0$ in the interlacements} \label{1.19}
\\
& \; \mbox{trajectories with level at most $v$, for $v \ge 0$}. \nonumber
\end{align}
Given
\begin{equation}\label{1.20}
\mbox{$\alpha > \beta > \gamma$ in $(0,\ov{u})$},
\end{equation}

\medskip\n
the notion of an {\it $(\alpha,\beta,\gamma)$-good} box $B_{0,z}$ plays an important role in the definition of $\cU_1$. We refer to (2.11) - (2.13) of \cite{Szni17}, see also (1.38) of \cite{Szni21b} for the precise definition. The details of the definition will not be important here. In essence, one looks at the (naturally ordered) excursions from $D_{0,z}$ to $\partial U_{0,z}$ in the trajectories of the interlacements. For an $(\alpha,\beta,\gamma)$-good box $B_{0,z}$ the complement of the first $\alpha\, {\rm cap}(D_{0,z})$ excursions leaves in $B_{0,z}$ at least one connected set with sup-norm diameter at least $L_0/10$, which is connected to any similar components in neighboring boxes of $B_{0,z}$ via paths in $D_{0,z}$ avoiding the first $\beta \,{\rm cap}(D_{0,z})$ excursions from $D_{0,z}$ to $\partial U_{0,z}$. In addition, the first $\beta\, {\rm cap}(D_{0,z})$ excursions spend a substantial ``local time'' on the inner boundary of $D_{0,z}$ of an amount at least  $\gamma\, {\rm cap}(D_{0,z})$. We refer to $B_0$-boxes that are not $(\alpha,\beta,\gamma)$-good as {\it $(\alpha,\beta,\gamma)$-bad} boxes.

\medskip
We now fix a level $u$ as in (\ref{0.1}), that is
\begin{equation}\label{1.21}
0 < u < \ov{u} .
\end{equation}
Following (4.27) of \cite{NitzSzni20} or (1.40) of \cite{Szni21b}, we introduce the important random set
\begin{align}
\cU_1 = & \;\mbox{the union of $L_0$-boxes $B_0$ that are either contained in $([-3N,3N]^d)^c$ or} \label{1.22}
\\
&\; \mbox{linked to an $L_0$-box contained in  $([-3N,3N]^d)^c$ by a path of $L_0$-boxes } \nonumber
\\
&\; \mbox{$B_{0,z_i}$, $0 \le i \le n$, which are all, except maybe for the last one, $(\alpha, \beta, \gamma)$-good} \nonumber
\\
&\;\mbox{and such that $N_u (D_{0,z_i}) < \beta \,{\rm cap} (D_{0,z_i})$.}\nonumber
\end{align}
We then define
\begin{equation}\label{1.23}
\cU_0 = \cU_1^c \; (\subseteq [-3N - L_0, 3 N + L_0]^d).
\end{equation}

\n
We use the notation $\partial_{B_0} \,\cU_1$ to refer to the (random) collection of $B_0$-boxes that are not contained in $\cU_1$ but are neighbor of a $B_0$-box in $\cU_1$. Note that when $B_0$ is $(\alpha,\beta,\gamma)$-good and belongs to $\partial_{B_0} \,\cU_1$, then necessarily $N_u (D_0) \ge \beta \,{\rm cap} (D_0)$ (otherwise $B_0$ would belong to $\cU_1$). Although we will not need this fact here, let us mention that the random set $\cU_1$ provides paths in $\cV^u$ going from any $B_0$ in $\cU_1 \cap D_N$ to $([-3N + L_0, 3 N - L_0]^d)^c$. Such paths necessarily meet the inner boundary $S_{2N}$ of $B(0,2N)$, see below (1.40) of \cite{Szni21b}.

\medskip
We also recall a statement (see Lemma 1.4 of \cite{Szni21b}) concerning the rarity of $(\alpha,\beta,\gamma)$-bad boxes.
\begin{lemma}\label{lem1.3}
Given $K \ge c_2(\alpha, \beta,\gamma)$ there exists a non-negative function $\rho(L)$ depending on $\alpha,\beta,\gamma,K$ satisfying $\lim_L \rho(L) = 0$, such that
\begin{equation}\label{1.24}
\begin{array}{l}
\mbox{$\lim\limits_N \; \mbox{\f $\dis\frac{1}{N^{d-2}}$} \; \log \IP[\cB_N] = - \infty$, where $\cB_N$ stands for the event}
\\[1.5ex]
\mbox{$\cB_N = \{$there are more than $\rho(L_0) \,N^{d-2}$ many $(\alpha,\beta,\gamma)$-bad $B_0$-boxes}
\\
\qquad \;\;\;  \mbox{intersecting $[-3N,3N]^d\}$}.
\end{array}
\end{equation}
\end{lemma}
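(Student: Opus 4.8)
The plan is to show that $(\alpha,\beta,\gamma)$-bad boxes are rare by bounding the probability that a fixed $B_0$-box is bad, then using independence over a sparse sub-collection together with a union bound over the many ways to place $\rho(L_0)N^{d-2}$ bad boxes. First I would recall from \cite{Szni17} (the definition referenced near (\ref{1.20})) that for $K$ large enough depending on $\alpha,\beta,\gamma$, there is a function $\rho(L)\to 0$ as $L\to\infty$ with $\IP[B_{0,z}\text{ is }(\alpha,\beta,\gamma)\text{-bad}]\le \rho(L_0)$ for every $z\in\IL_0$; this is exactly the content of the ``sprinkling''/decoupling estimates that underlie the notion of strong percolation, and it is where the condition $K\ge c_2(\alpha,\beta,\gamma)$ and the choice $\gamma<\beta<\alpha<\ov u$ enter. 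I would import this as a black box since the details of the $(\alpha,\beta,\gamma)$-good definition are declared unimportant here.

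Second, I would set up the counting. The number of $B_0$-boxes meeting $[-3N,3N]^d$ is at most $C(N/L_0)^d$, call it $M_N$. On the event $\cB_N$ there is a set $\cF$ of at least $\lceil\rho(L_0)N^{d-2}\rceil$ bad boxes among these $M_N$. I would then pass to a sub-collection of $\cF$ whose boxes $D_{0,z}$ are pairwise ``far apart'' — say with base points at mutual $|\cdot|_\infty$-distance exceeding $2KL_0$, so the associated boxes $U_{0,z}$ are disjoint — by a greedy/Vitali extraction: any maximal such sub-collection $\cF'$ of $\cF$ has size at least $|\cF|/(cK^d)$, since each chosen box excludes at most $cK^d$ competitors. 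The events $\{B_{0,z}\text{ is }(\alpha,\beta,\gamma)\text{-bad}\}$ for $z$ ranging over $\cF'$ are independent, because badness of $B_{0,z}$ is measurable with respect to the interlacement excursions inside $U_{0,z}$ (more precisely, it depends only on the interlacement trajectories restricted to $U_{0,z}$), and these $U_{0,z}$ are disjoint; alternatively one invokes the relevant finite-range/decoupling property stated in \cite{Szni17}.

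Third, I would combine these: for a fixed choice of a far-apart sub-collection $\cF'$ with $|\cF'|=m:=\lceil \rho(L_0)N^{d-2}/(cK^d)\rceil$, independence gives $\IP[\text{all boxes of }\cF'\text{ bad}]\le \rho(L_0)^m$. Taking a union bound over the at most $\binom{M_N}{m}\le M_N^m$ placements of such a sub-collection yields
\begin{equation}\label{1.24a}
\IP[\cB_N]\le \big(M_N\,\rho(L_0)\big)^m \le \exp\Big\{\tfrac{\rho(L_0)N^{d-2}}{cK^d}\,\big(\log M_N+\log\rho(L_0)\big)\Big\}.
\end{equation}
Now $\log M_N=d\log(N/L_0)+O(1)=O(\log N)$ while $\rho(L_0)\to 0$ as $N\to\infty$ (since $L_0\sim N^{1/(d-1)}\to\infty$), so $\log M_N+\log\rho(L_0)\to-\infty$; hence $\frac{1}{N^{d-2}}\log\IP[\cB_N]\le \frac{\rho(L_0)}{cK^d}(\log M_N+\log\rho(L_0))\to-\infty$, which is (\ref{1.24}).

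The main obstacle is not the counting — that is routine — but making the two structural inputs precise and legitimately citable: (i) the single-box bad-probability estimate $\IP[B_{0,z}\text{ bad}]\le\rho(L_0)$ with $\rho(L)\to0$, valid uniformly in $z$ for $K$ large (this is the substantive probabilistic fact, resting on the strong-percolation/decoupling machinery for random interlacements at levels below $\ov u$), and (ii) the near-independence of badness over well-separated boxes, which must be stated so that the product bound $\rho(L_0)^m$ is rigorous (exact independence if badness is local to $U_{0,z}$; otherwise a small correction from decoupling, which is harmless since we only need an $o(1)$ per-box bound). Since the lemma is quoted from \cite{Szni21b} and ultimately from \cite{Szni17}, the cleanest route is to cite those directly for (i) and (ii) and present only the extraction-plus-union-bound argument above.
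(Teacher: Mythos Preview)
The paper gives no proof here; the lemma is simply recalled from Lemma~1.4 of \cite{Szni21b}, with the underlying estimates going back to Section~2 of \cite{Szni17}. Your closing remark is therefore correct in spirit: the substantive inputs (i) and (ii) must be imported. But the explicit computation you layer on top of them has a genuine gap.

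You use the same function $\rho(L)$ for both the \emph{threshold} defining $\cB_N$ and the \emph{single-box bad probability}, and then claim $\log M_N+\log\rho(L_0)\to-\infty$ from $\rho(L_0)\to 0$. This does not follow, and in fact your displayed upper bound can never tend to $-\infty$: writing $x=M_N\rho(L_0)$ one has $\rho(L_0)\,\log(M_N\rho(L_0))=M_N^{-1}\,x\log x$, and since $|x\log x|\le e^{-1}$ for $x\in(0,1)$ while $M_N\to\infty$, the right-hand side of your bound tends to $0$, not $-\infty$, \emph{regardless} of the decay rate of $\rho$. The repair is to separate the two roles. The single-box bad probability, call it $\eta(L)$, is known from \cite{Szni17} to satisfy a \emph{stretched-exponential} bound $\eta(L)\le e^{-cL^{c'}}$ (not merely $\eta\to 0$); one then chooses the threshold $\rho(L)$ to decay much more slowly, say any $\rho(L)\to 0$ with $\rho(L)\,L^{c'}\to\infty$. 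With that separation $m\sim\rho(L_0)N^{d-2}\to\infty$, the product bound reads $\eta(L_0)^m$ rather than $\rho(L_0)^m$, and one obtains $N^{-(d-2)}\log\IP[\cB_N]\le c\,\rho(L_0)\log(M_N\eta(L_0))\sim -c\,\rho(L_0)\,L_0^{c'}\to-\infty$. As for (ii): disjointness of the $U_{0,z}$ does \emph{not} give independence of the bad events under $\IP$, since a single interlacement trajectory contributes excursions to many boxes; the decoupling you mention as an ``alternative'' is in fact required, and in \cite{Szni17} it is the soft-local-times coupling that simultaneously delivers the stretched-exponential single-box estimate and the product-type bound over well-separated boxes.
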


We then proceed with the definition of the bubble set. Given an $L_1$-box $B_1$, we denote by ${\rm Deep} \,B_1$ the set (see (1.46) of \cite{Szni21b})
\begin{equation}\label{1.25}
{\rm Deep} \,B_1 = \bigcup\limits_{z \in \IL_0, D_{0,z} \subseteq B_1} B_{0,z},
\end{equation}

\n
which in essence is obtained by ``peeling off'' a shell of depth $3L_0$ from the surface of $B_1$, thus only keeping $B_0$-boxes such that the corresponding $D_0$ is contained in $B_1$.

\medskip
One then defines the {\it bubble set}
\begin{equation}\label{1.26}
Bub  = \bigcup\limits_{B_1 \subseteq D_N, \,\cU_1 \cap {\rm Deep} \,B_1 = \phi} B_1
\end{equation}

\n
that is the union of the $B_1$-boxes in $D_N$ such that $\cU_1$ does not reach ${\rm Deep} \,B_1$, i.e.~such that ${\rm Deep} \,B_1 \subseteq \cU_0$, see Figure 2.

\section{Volume estimates for rarefied boxes}
\setcounter{equation}{0}

In this section we bring into play a notion of {\it rarefied boxes}. With the help of capacity and volume estimates originally developed in the context of the {\it method of enlargement of obstacles}, see Chapter 4 \S 3 of \cite{Szni98a}, or Theorem 2.1 of \cite{Szni97a}, we derive volume controls in Proposition \ref{prop2.1}. They play an important role in the proof of Theorem \ref{theo4.1}, when constructing the coarse grained random set $C_\omega$ accounting for the bubble set. The volume estimates of Proposition \ref{prop2.1} are applied in Section 4 to boxes of so-called Types $\Ib$ and $\IB$, see (\ref{4.13}), (\ref{4.16}). Informally, they correspond to certain nearly macroscopic boxes ``at the boundary of $\cU_0$'' that intersect the bubble set. The estimates in the present section enable us to discard the so-called {\it rarefied boxes} of Types $\Ib$ and $\IB$ in Section 4, see (\ref{4.37}), (\ref{4.49}). On the other hand, the {\it substantial} (i.e.~non-rarefied) {\it boxes} of Types $\Ib$ and $\IB$ fulfill a kind of Wiener criterion, which ensures that $(\alpha,\beta,\gamma)$-good boxes of $\partial_{B_0} \,\cU_1$ are ``present on many scales'', see (\ref{4.36}), (\ref{4.48}). As an aside, the notion of rarefied boxes that we consider in this section is substantially more refined than that which was used in Section 3 of \cite{Szni21b}.

\medskip
We first introduce an ``$M$-adic decomposition of $\IZ^d$'' where $L_1$, see (\ref{1.10}), corresponds to the smallest scale, and $N$ roughly to the largest scale. More precisely, we consider a dyadic integer $M > 4$, solely depending on $d$, such that
\begin{equation}\label{2.1}
M^2 > 3^d + 1 \quad \mbox{(with $M = 2^b$, $b$ integer)}.
\end{equation}

\n
Having $M$ a dyadic number will be convenient in the next section when discussing {\it resonance sets} (see also the Appendix).

\medskip
As mentioned above, the smallest scale under consideration corresponds to $L_1$ and the largest scale corresponds to $M^{\ell_N} L_1$, where
\begin{equation}\label{2.2}
M^{\ell_N} L_1 \le N < M^{(\ell_N + 1)} L_1.
\end{equation}

\n
We view things from the point of view of the top scale and $0 \le \ell \le \ell_N$ labels the depth with respect to the top scale. For such $\ell$ we set (hopefully there should be no confusion with the notation for random interlacements, where the level always appear as a superscript, see above (\ref{0.1}))
\begin{align}
\cI_\ell = &\;\mbox{the collection of $M$-adic boxes of depth $\ell$, i.e.~of boxes of the form} \label{2.3}
\\
& \; \{  M^{\ell_N - \ell} L_1 \, z + [0, M^{\ell_N - \ell} \,L_1)^d \} \cap \IZ^d, \;\mbox{where $z \in \IZ^d$}. \nonumber
\end{align}

\n
The collections $\cI_\ell$, $0 \le \ell \le \ell_N$ are naturally nested; $\cI_{\ell_N}$ corresponds to the collection of $B_1$-boxes and $\cI_0$ to boxes of approximate size $N$. Given $\ell$ as above and $B \in \cI_\ell$, the ``tower above $B$'' stands for the collection of boxes $B' \in \bigcup_{0 \le \ell' \le \ell} \cI_{\ell'}$ such that $B' \supseteq B$. Further,
\begin{equation}\label{2.4}
\begin{array}{l}
\mbox{for $0 \le \ell' \le \ell \le \ell_N$ and $B \in \cI_\ell$, we denote by $B^{(\ell')}$ the unique box of $\cI_{\ell'}$ }
\\
\mbox{that contains $B$}.
\end{array}
\end{equation}
We consider some depth
\begin{equation}\label{2.5}
1 \le q \le \ell_N,
\end{equation}
as well as a set
\begin{equation}\label{2.6}
F \subseteq \IZ^d .
\end{equation}

\n
For the applications given in Section 4, the depth $q$ will correspond to $\wt{p}$ in (\ref{4.9}) and $F$ will correspond to $\bigcup_{j \in \cG} \, B'_j$ in the context of Type $\Ib$ boxes, see (\ref{4.35}), (\ref{4.13}), and to $\bigcup_{1 \le j \le J^{\IB}} \Delta'_j$ in the context of Type $\IB$ boxes, see (\ref{4.47}), (\ref{4.16}).

\medskip
We then introduce the dimension dependent constants
\begin{equation}\label{2.7}
\mbox{$\delta = \mbox{\f $\dis\frac{1}{2}$} \;\delta_0$, where $\delta_0 = (4c_* \,M^{2d-2})^{-1}$, with $c_*$ from (\ref{1.3})}.
\end{equation}
Given a box $B \in \cI_q$, we say that
\begin{equation}\label{2.8}
\mbox{$B$ is {\it rarefied} if $\dis\sum\limits_{B \subseteq \wt{B} \subseteq B^{(1)}} \; \dis\frac{{\rm cap} (\wt{B} \cap F)}{|\wt{B}|^{\frac{d-2}{d}}} < \delta q$},
\end{equation}

\n
where the sum runs over the boxes $\wt{B}$ in the tower above $B$ that are contained in $B^{(1)}$ (see (\ref{2.4}) for notation).

\medskip
The main object of this section is
\begin{proposition}\label{prop2.1}
Assume that $q$ and $F$ are as in (\ref{2.5}), (\ref{2.6}) and that
\begin{equation}\label{2.9}
L_1^{d-2} g(0,0) > 2 c_* \,M^d.
\end{equation}
Then, for any box $\Delta \in \cI_0$, one has
\begin{equation}\label{2.10}
\dis\sum\limits_{B \in \cI_q, {\rm rarefied}, B \subseteq \Delta} |B \cap F| \le c_3 \,|\Delta | \;\Big(\mbox{\f $\dis\frac{M^2}{3^d + 1}$}\Big)^{-q/2}.
\end{equation}
\end{proposition}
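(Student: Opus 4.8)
The plan is to estimate the left-hand side of (\ref{2.10}) by a double sum — over the $M$-adic boxes $\Delta'$ of intermediate depths $1\le \ell'\le q$ that sit in the tower below $\Delta$, and over the rarefied boxes $B\in\cI_q$ inside a fixed such $\Delta'$ — and to exploit the fact that for a rarefied box $B$ the quantity $|B\cap F|$ is small relative to $|B|$ because of a volume-versus-capacity estimate coming from the method of enlargement of obstacles. Concretely, the first step is to recall (from Chapter 4 \S 3 of \cite{Szni98a}, or Theorem 2.1 of \cite{Szni97a}) the basic inequality: for a box $\wt B$ and a set $F$, if ${\rm cap}(\wt B\cap F)$ is small compared to $|\wt B|^{\frac{d-2}{d}}$ then $|\wt B\cap F|$ is small compared to $|\wt B|$; the precise form is governed by the constant $c_*$ from (\ref{1.3}) and the hypothesis (\ref{2.9}), which guarantees that the relevant Green-function / capacity comparison is nondegenerate at the bottom scale $L_1$. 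Thus a single rarefied box $B\in\cI_q$ already satisfies $|B\cap F|\le \eta\,|B|$ for some $\eta=\eta(M,d)<1$; iterating this statement up the tower — using that the defining sum in (\ref{2.8}) is a sum of $q$ nonnegative terms each of which, were it $\ge\delta$, would force a proportional loss of volume — yields a geometric gain of the form $\big(\frac{M^2}{3^d+1}\big)^{-q/2}$ per box.

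The second step is the combinatorial bookkeeping. For $B\in\cI_q$ write $B^{(\ell')}$, $1\le\ell'\le q$, for its ancestors as in (\ref{2.4}). Being rarefied says $\sum_{B\subseteq\wt B\subseteq B^{(1)}} {\rm cap}(\wt B\cap F)/|\wt B|^{\frac{d-2}{d}}<\delta q$, so by pigeonhole at least half of the $q$ terms indexed by $\ell'\in\{1,\dots,q\}$ (after grouping the tower by depth) have ${\rm cap}(B^{(\ell')}\cap F)/|B^{(\ell')}|^{\frac{d-2}{d}}<2\delta=\delta_0$. At each such "deficient" scale $\ell'$ the enlargement-of-obstacles volume estimate applied inside $B^{(\ell')}$ gives that the portion of $F$ in $B^{(\ell')}$ occupies at most a fixed fraction, say $\frac{3^d+1}{M^2}\cdot(\text{const})$, of the volume of $B^{(\ell')}$; summing $|B\cap F|$ over the children $B\in\cI_q$ of $B^{(\ell')}$ that are rarefied inherits this fraction, and composing the bound over the $\ge q/2$ deficient scales multiplies these fractions together, producing exactly the exponent $-q/2$ in (\ref{2.10}). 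Summing finally over the boxes at intermediate scales contained in $\Delta\in\cI_0$ contributes only the factor $|\Delta|$ (the scales are telescoped, not multiplied, in the volume count), and the dimensional constant $c_3$ absorbs the finitely many $M$- and $d$-dependent factors.

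The main obstacle will be the first step done carefully: extracting from the capacity-smallness condition (\ref{2.8}) a genuinely multiplicative (scale-by-scale) decay of $|B\cap F|$, rather than just an additive one. The subtlety is that the capacity-to-volume comparison in \cite{Szni98a}, \cite{Szni97a} is naturally a one-scale statement, and turning "the sum over the tower is $<\delta q$" into "the volume fraction decays like a $q$-fold product" requires choosing the truncation threshold $\delta_0$ and the ratio $M^2/(3^d+1)$ so that a single deficient scale buys a definite contraction factor strictly less than $1$ while a single non-deficient scale costs at most a bounded factor — and then checking that the $\ge q/2$ deficient scales dominate. The constants (\ref{2.1}), (\ref{2.7}) and the hypothesis (\ref{2.9}) are tuned precisely for this balance, so the real work is verifying that the enlargement-of-obstacles estimate, applied with these constants, delivers contraction factor $\le \big(\frac{3^d+1}{M^2}\big)^{1/2}$ at each deficient scale and factor $\le 1$ (harmlessly) elsewhere. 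Once that one-scale lemma is pinned down, the rest is the summation sketched above.
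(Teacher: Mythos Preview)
There is a genuine gap in your mechanism for obtaining the exponential factor $\big(\frac{M^2}{3^d+1}\big)^{-q/2}$. The step ``composing the bound over the $\ge q/2$ deficient scales multiplies these fractions together'' does not work as stated. If $\ell'_1 < \ell'_2$ are two deficient scales for a rarefied $B$, the volume-versus-capacity estimate gives $|B^{(\ell'_i)}\cap F| \le c\,\delta_0\,|B^{(\ell'_i)}|$ for $i=1,2$; but these are bounds on the \emph{same} nested sets $B^{(\ell'_2)}\cap F \subseteq B^{(\ell'_1)}\cap F$, and the second one is simply tighter (or not) than the first --- they do not compose into a product. Knowing that a set has small relative volume inside many nested boxes gives you no more than the best single-scale bound. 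Relatedly, your opening claim that ``a single rarefied box $B$ already satisfies $|B\cap F|\le \eta\,|B|$'' is false: the condition (\ref{2.8}) constrains only the \emph{sum} over the tower, and the bottom term ${\rm cap}(B\cap F)/|B|^{\frac{d-2}{d}}$ can be of order $\wh{c}$ (so $B\cap F = B$ is allowed) provided enough higher terms are small.

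The paper's argument runs through capacity rather than volume and the multiplicative structure arises from a \emph{growth} relation, not a contraction. The key lemma (Lemma \ref{lem2.2}) shows that for $\ov{B}\in\cI_\ell$ with children $\wh{B}\in\cI_{\ell+1}$,
\[
\frac{{\rm cap}(\ov{B}\cap F)}{|\ov{B}|^{\frac{d-2}{d}}} \;\ge\; \frac{M^2}{3^d+1}\cdot\frac{1}{M^d}\sum_{\wh{B}\subseteq\ov{B}} \Big\{\frac{{\rm cap}(\wh{B}\cap F)}{|\wh{B}|^{\frac{d-2}{d}}}\Big\}\wedge \wt{\eta}_0,
\]
i.e.\ the normalized capacity grows by the factor $\frac{M^2}{3^d+1}>1$ (with saturation at $\wt{\eta}_0$) as one passes from fine to coarse. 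This is exactly where the base $\frac{M^2}{3^d+1}$ enters, and it is a capacity statement, not a volume one; the constant $3^d+1$ comes from counting neighboring sub-boxes in a Green-function estimate, not from a volume contraction. One then feeds the recursion $Y_{\ov{B}} \ge \frac{M^2}{3^d+1}\cdot M^{-d}\sum_{\wh{B}} Y_{\wh{B}}\wedge\wt{\eta}_0$ into the abstract Lemma~3.4 of \cite{Szni98a}, which handles precisely such ``growth-and-saturation'' trees and outputs $M^{-dq}\sum_{B\in\cI_q,\,{\rm rarefied}} Y_B \le c'\big(\frac{M^2}{3^d+1}\big)^{-q/2}$. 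Only \emph{then}, at the bottom scale and once, is the volume-versus-capacity bound $|B\cap F|/|B| \le c\,Y_B$ applied to convert this into (\ref{2.10}). Your proposal inverts this order --- converting to volume first and trying to iterate --- and that is where the argument breaks.
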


\begin{proof}
The next lemma contains the key control that will then be iterated over scales. In the words of Remark 3.3 of p.~172 of \cite{Szni98a}, it reflects growth (with $\frac{M^2}{3^d + 1} > 1$) and saturation (corresponding to the truncation by $\wt{\eta}_0)$ in the evolution of the properly normalized capacities from one scale to the next.

\begin{lemma}\label{lem2.2} (under (\ref{2.9}))

\medskip\n
Consider $0 \le \ell \le \ell + 1 \le q$ and $\ov{B} \in \cI_\ell$. Denote by $\wh{B} \subseteq \ov{B}$ a generic subbox in $\cI_{\ell + 1}$ and set (with $c_*$ as in (\ref{1.3}))
\begin{equation}\label{2.11}
\mbox{$\eta_0 = (c_* \,M^d)^{-1}$ and \; $\wt{\eta}_0 =  \eta_0 / 2$}.
\end{equation}
Then one has 
\begin{equation}\label{2.12}
\dis\frac{{\rm cap} (\ov{B} \cap F)}{|\ov{B}|^{\frac{d-2}{d}}} \ge \dis\frac{M^2}{3^d + 1} \;  \dis\frac{1}{M^d} \; \dsl_{\wh{B} \subseteq \ov{B}}\;
 \Big\{ \dis\frac{{\rm cap} (\wh{B} \cap F)}{|\wh{B}|^{\frac{d-2}{d}}}  \Big\} \wedge \wt{\eta}_0 .
\end{equation}
\end{lemma}

\begin{proof}
Consider $\ov{B} \in \cI_\ell$ as above. For each $\wh{B} \subseteq \ov{B}$, $\wh{B} \in \cI_{\ell + 1}$, setting $L = |\wh{B}|^{\frac{1}{d}}$ as a shorthand notation, we first show the existence of
\begin{equation}\label{2.13}
\begin{array}{l}
\mbox{$\wh{B} ' \subseteq \wh{B} \cap F$ such that}
\\
 {\rm cap} (\wh{B} \cap F) \wedge (\eta_0 \,L^{d-2}) \ge {\rm cap} (\wh{B} ') \ge {\rm cap}(\wh{B} \cap F) \wedge \Big(\eta_0 \,L^{d-2} - \mbox{\f $\dis\frac{1}{g(0,0)}$}\Big).
\end{array}
\end{equation}
Observe that by (\ref{2.9}) and (\ref{2.11})
\begin{equation}\label{2.14}
\eta_0 \,L^{d-2} = (c_* \,M^d)^{-1} \,L^{d-2} \ge (c_* \,M^d)^{-1} L_1^{d-2} \ge 2 / g(0,0).
\end{equation}
To construct $\wh{B} '$ we note that:
\\
either ${\rm cap} (\wh{B} \cap F) \le \eta_0 \, L^{d-2}$, in which case we choose $\wh{B} ' = \wh{B} \cap F$ and (\ref{2.13}) holds, \\
or ${\rm cap} (\wh{B} \cap F) > \eta_0 \, L^{d-2} \ge 2/ g(0,0)$ (by (\ref{2.14})) and we remove from $\wh{B} \cap F$ one point at a time, decreasing at each step the capacity by an amount at most $1/g(0,0)$ (i.e.~the capacity of a point in $\IZ^d$) until the first time when the resulting capacity is smaller or equal to $\eta_0 \,L^{d-2}$. The set thus obtained has now capacity at most $\eta_0 \,L^{d-2} \le {\rm cap} (\wh{B} \cap F)$ and at least $\eta_0 \,L^{d-2} - 1/g(0,0) = {\rm cap}(\wh{B} \cap F) \wedge (\eta_0 \,L^{d-2} - 1/g(0,0)$). So choosing this set as $\wh{B} '$, the claim (\ref{2.13}) holds as well.

\medskip
So, for each $\wh{B} \subseteq \ov{B}$ with $\wh{B} \in \cI_{\ell + 1}$ with pick $\wh{B} ' \subseteq \wh{B} \cap F$ satisfying (\ref{2.13}).  Next, we consider the measure (with hopefully obvious notation)
\begin{equation}\label{2.15}
\mbox{$\nu = \dis\sum\limits_{\wh{B} \subseteq \ov{B}} \;e_{\wh{B} '}$ \;\; (with $e_{\wh{B} '}$ the equilibrium measure of $\wh{B} '$)},
\end{equation}
which is supported by the set
\begin{equation}\label{2.16}
S = \bigcup\limits_{\wh{B} \subseteq \ov{B}} \wh{B} ' \subseteq \ov{B} \cap F.
\end{equation}

\n
Then, for $x \in S$ we denote by $\Sigma_1$ the sum over the boxes $\wh{B}$ containing $x$ or neighbor (in the sup-norm sense) of the box containing $x$, and by $\Sigma_2$ the sum over the remaining boxes $\wh{B} \subseteq \ov{B}$. We then have
\begin{equation}\label{2.17}
\begin{array}{lcl}
\dsl_y g(x,y) \,\nu(y) &\hspace{-4ex}  = &\hspace{-4ex} \dsl_{\wh{B}} \! _{\,_1} \; \dsl_{y \in \wh{B}} \; g(x,y) \,e_{\wh{B}'} (y)
\\[2ex]
&\hspace{-4ex}  + &\hspace{-4ex} \dsl_{\wh{B}} \! _{\,_2} \;  \dsl_{y \in \wh{B}} \; g(x,y) \,e_{\wh{B}'} (y) \;\; \mbox{(in this sum necessarily $|x-y| \ge L$)}
\\[2ex]
&\hspace{-4ex} \stackrel{(\ref{1.5}), (\ref{1.3})}{\le} &\hspace{-2ex} 3^d + \dsl_{\wh{B}} \!\! _{\,_2} \;\; \dis\frac{c_*}{L^{d-2}} \;{\rm cap} (\wh{B}') \stackrel{(\ref{2.13})}{\le} 3^d + c_* \,\eta_0 \, M^d
\\[2ex]
& \hspace{-4ex}\stackrel{(\ref{2.11})}{=} &\hspace{-2ex} 3^d + 1 .
\end{array}
\end{equation}

\n
It follows that $G \nu (x) \le 3^d + 1$ for each $x \in S$. Since $\nu$ is supported by $S$, we find that ${\rm cap}(S) = \langle e_S,1\rangle \ge \langle e_S,  G\nu \rangle  (3^d + 1)^{-1}  = \langle G e_S, \nu \rangle (3^d + 1)^{-1} \stackrel{(\ref{1.5})}{=} \nu (\IZ^d) / (3^d + 1)$, and hence
\begin{equation}\label{2.18}
\begin{array}{lcl}
{\rm cap} (\ov{B} \cap F) & \stackrel{(\ref{2.16})}{\ge} & {\rm cap}(S) \ge \dis\frac{\nu(\IZ^d)}{3^d + 1} \stackrel{(\ref{2.15})}{=} \mbox{\f $\dis\frac{1}{3^d+1}$}\;\dsl_{\wh{B} \subseteq \ov{B}} \; {\rm cap}(\wh{B} ')
\\[2ex]
& \stackrel{(\ref{2.13})}{\ge} & \dis\frac{1}{3^d + 1} \; \dsl_{\wh{B} \subseteq \ov{B}} \; {\rm cap} (\wh{B} \cap F) \wedge \Big(\eta_0 \,L^{d-2} - \dis\frac{1}{g(0,0)}\Big)
\end{array}
\end{equation}

\n
Dividing both sides by $|\ov{B}|^{\frac{d-2}{d}} = M^{d-2} \,|\wh{B}|^{\frac{d-2}{d}} = M^{d-2} \,L^{d-2}$, and using that $\eta_0 - (g(0,0)\,L^{d-2})^{-1} \ge \wt{\eta}_0$ by (\ref{2.14}), (\ref{2.11}), the claim (\ref{2.12}) follows. This proves Lemma \ref{lem2.2}.
\end{proof}

We now proceed with the proof of Proposition \ref{prop2.1}. We define the quantities
\begin{equation}\label{2.19}
\mbox{$Y_{\ov{B}} = \dis\frac{{\rm cap}(\ov{B} \cap F)}{| \ov{B} |^{\frac{d-2}{d}}}$, for $\ov{B} \in \dis\bigcup\limits_{0 \le \ell \le q} \cI_\ell$ such that $\ov{B} \subseteq \Delta ( \in \cI_0)$}.
\end{equation}

\n
We note that the $Y_{\ov{B}}$ satisfy (with hopefully obvious notation)
\begin{equation}\label{2.20}
\left\{\begin{array}{l}
\mbox{$Y_{\ov{B}} \le \wh{c}$\;\;\,(with $\wh{c}$ as in (\ref{1.8}))},
\\[1ex]
\mbox{for $\wh{B} \subseteq \ov{B}$ with $\wh{B} \in \cI_{\ell + 1}, \,\ov{B} \in \cI_\ell, Y_{\wh{B}} \le M^{d-2} \,Y_{\ov{B}}$},
\\[0.5ex]
Y_{\ov{B}} \stackrel{(\ref{2.12})}{\ge} \mbox{\f $\dis\frac{M^2}{3^d + 1}$} \; \mbox{\f $\dis\frac{1}{M^d}$} \; \dsl_{\wh{B} \subseteq \ov{B}} Y_{\wh{B}} \wedge \wt{\eta}_0 .
\end{array}\right.
\end{equation}

\n
We can then apply Lemma 3.4, p.~173 of \cite{Szni98a} and obtain as on p.~175 of the same reference the capacity estimate (where in the notation of \cite{Szni98a}, $c_6 = \wh{c}$, $c_8 = \frac{M^2}{3^d + 1}$, $c_7 = M^{d-2}$, $\delta_1 = \wt{\eta}_0$, $c_9 = (\frac{\delta_1}{c_6 \, c_7}) \wedge 1$, and $\frac{1}{2} \,\delta_1 \, c_7^{-1} = \frac{1}{2} \, \wt{\eta}_0 \, M^{2-d} = (4c_* \,M^{2d-2})^{-1} = \delta_0$ in (\ref{2.7})):
\begin{equation}\label{2.21}
\mbox{\f $\dis\frac{1}{M^{dq}}$} \; \dsl_{B \subseteq \cI_q,{\rm rarefied},B \subseteq \Delta} \; Y_B \le c'\, \Big( \mbox{\f $\dis\frac{M^2}{3^d+1}$} \Big)^{-q/2} .
\end{equation}

\n
(and $c'$ corresponds to $\frac{c_6\, c_8}{c_9}$ in the notation of (3.38) on p.~175 of \cite{Szni98a}). 

\medskip
Further, one knows by (3.36) of \cite{Szni21b} that for $A \subseteq B$ with $B$ a box, one has $|A| / |B| \le c \,{\rm cap}(A) / |B|^{\frac{d-2}{d}}$, so that from (\ref{2.21}) we obtain
\begin{equation}\label{2.22}
\mbox{\f $\dis\frac{1}{M^{dq}}$} \; \dsl_{B \subseteq \cI_q,{\rm rarefied},B \subseteq \Delta} \; \mbox{\f $\dis\frac{|B \cap F|}{|B|}$} \le c\, \Big( \mbox{\f $\dis\frac{M^2}{3^d+1}$} \Big)^{-q/2} .
\end{equation}

\n
Since for all $B \in \cI_q$, $M^{dq} |B| = |\Delta |$, the claim (\ref{2.10}) follows. This completes the proof of Proposition \ref{prop2.1}.
\end{proof}

\begin{remark}\label{rem2.3} \rm Incidentally let us mention that $\wh{c} > \wt{\eta}_0 > \delta$ (in the notation of (\ref{1.8}), (\ref{2.11}), (\ref{2.7})). The second inequality is immediate. As for the first inequality, setting $a_n = {\rm cap} ([0,M^n \,L_1)^d) / (M^n \,L_1)^{d-2}$ for $n \ge 0$, and assuming $N$ large enough so that (\ref{2.9}) holds, Lemma \ref{lem2.2} with $F = \IZ^d$ shows that $a_{n+1} \ge \frac{M^2}{3^d + 1} \;(a_n \wedge \wt{\eta}_0)$ for all $n \ge 0$. Since the sequence $a_n$ is bounded by $\wh{c}$ and $M^2/(3^d + 1) > 1$, this implies that for some $n_0 \ge 0$ the inequality $a_{n_0} \le \wt{\eta}_0$ occurs so that $\wh{c} \ge a_{n_0 + 1} \ge \frac{M^2}{3^d + 1} \;\wt{\eta}_0 > \wt{\eta}_0$, and this yields the first inequality. \hfill $\square$
\end{remark}

We close this section with some notation. We set
\begin{equation}\label{2.23}
R_\ell = M^{\ell_N - \ell} \,L_1, \; \mbox{for $0 \le \ell \le \ell_N$},
\end{equation}
so that $R_\ell$ is the size of the boxes in $\cI_\ell$, for $0 \le \ell \le \ell_N$. Also we denote by
\begin{equation}\label{2.24}
\mbox{$\ov{D}_N$ the union of boxes in $\cI_0$ that intersect $D_N$},
\end{equation}
so that
\begin{equation}\label{2.25}
D_N = [-N,N]^d \subseteq \ov{D}_N \subseteq [-2N, 2N]^d \;\;\mbox{and $|D_N| \le |\ov{D}_N| \le 2^d\, |D_N|$}.
\end{equation}

\section{Resonance sets}
\setcounter{equation}{0}

In this section we introduce certain {\it resonance sets}, where on many well-separated scales a finite set $U_0$ and its complement $U_1$ (which up to translation play the role of $\cU_0$ and $\cU_1$ in (\ref{1.23}), (\ref{1.22})) occupy a non-degenerate fraction of volume. The main Proposition \ref{prop3.1} comes as an adaptation to our set-up of the results of \cite{NitzSzni20} in the case of Brownian motion, and of \cite{ChiaNitz} in the case of random walks among random conductances. It shows that when the simple random walk starts at a point where on many well-separated scales $U_0$ occupies at least half of the relative volume, see (\ref{3.11}), then it enters the resonance set with high probability. Proposition \ref{prop3.1} plays an important role in the proof of Theorem \ref{theo4.1}, specifically for the treatment of boxes of Type $\IA$ (informally, they correspond to certain nearly macroscopic boxes ``well within $\cU_0$'' that intersect the bubble set, see (\ref{4.17})). Proposition \ref{prop3.1} provides us with a quantitative bound on the probability that the random walk starting on such a box avoids a certain set $\wh{Res}_J$, see (\ref{4.62}), (\ref{4.56}). The main steps of the proof of Proposition \ref{prop3.1} are sketched in the Appendix.

\medskip
We now introduce the set-up. We first consider an integer, which will parametrize the strength of the resonance
\begin{equation}\label{3.1}
J \ge 1,
\end{equation}
as well as the integer
\begin{equation}\label{3.2}
L(J) = \min\{L \ge 5; d2^{d-1} \, 2^{-L} \le (200 J)^{-1}\},
\end{equation}
which will control the separation between dyadic scales. We keep here similar notation as in \cite{NitzSzni20} and \cite{ChiaNitz}. In this section and in the Appendix, $L$ does not refer to a spatial scale but to a separation between dyadic scales (hopefully no confusion should occur). Further, we have
\begin{equation}\label{3.3}
\mbox{$U_0$ a finite non-empty subset of $\IZ^d$, and $U_1 = \IZ^d \backslash U_0$},
\end{equation}
a separation between (dyadic) scales
\begin{equation}\label{3.4}
L \ge L(J),
\end{equation}
and a finite set $\cA$ such that for some non-negative integer $h$
\begin{equation}\label{3.5}
\cA \subseteq (J+1) \,L\, \IN_* + h
\end{equation}

\n
(we will assume that $U_0$ occupies at least half of the volume of the balls $B(0,2^m L_1)$ for each $m \in \cA)$. We also consider the enlarged set $\cA^*$ where intermediate scales have been added:
\begin{equation}\label{3.6}
\mbox{$\cA_* =     \{m \in \IN$; for some $0 \le j \le J$, $m + j \,L \in \cA\} \subseteq L\,\IN_* + h$},
\end{equation}
and the {\it $(\cA_*,J)$-resonance set}
\begin{equation}\label{3.7}
Res_{\cA_*,J} = \Big\{x \in \IZ^d; \; \dsl_{m \in \cA_*} 1\Big\{\mbox{\f $\dis\frac{|B(x,2^{m+2} L_1) \cap U_1|}{|B(x,2^{m+2} L_1)|}$} \in [\wt{\alpha}, 1 - \wt{\alpha}]\Big\} \ge J \Big\}, \; \mbox{with}\;\; \wt{\alpha} = \mbox{\f $\dis\frac{1}{3}$} \;4^{-d}.
\end{equation}

\n
The next Proposition \ref{prop3.1} states a uniform control showing that it is hard for the simple random walk to avoid the resonance set in (\ref{3.7}), when $\cA$ is sufficiently large, and $U_0$ occupies more than half the volume of the sup-norm balls centered at its starting point with radius $2^m L_1$, $m \in \cA$. More precisely, we have
\begin{proposition}\label{prop3.1}
There exists a doubly indexed non-negative sequence $\gamma_{I,J}$, $I,J \ge 1$, such that 
\begin{equation}\label{3.8}
\mbox{for each $J \ge 1$, $\lim\limits_{I \r \infty} \; \gamma_{I,J} = 0$  (see also (\ref{3.14}))},
\end{equation}
and a sequence of positive integers
\begin{equation}\label{3.9}
r_{\min} (J) \ge 6, \; J \ge 1.
\end{equation}
They have the property that for any $I, J \ge 1$, when
\begin{equation}\label{3.10}
L_1 \ge r_{\min} (J),
\end{equation}
for any $U_0, U_1$ as in (\ref{3.3}), $L \ge L(J)$ as in (\ref{3.4}), and $\cA$ satisfying (\ref{3.5}) for which
\begin{align}
&|B(0,2^m L_1) \cap U_1 | \le \mbox{\f $\dis\frac{1}{2}$} \;|B(0,2^m L_1)|, \; \mbox{for all $m \in \cA$ and} \label{3.11}
\\
&|\cA | = I, \label{3.12}
\end{align}
one has the bound (see the beginning of Section 1 for notation)
\begin{equation}\label{3.13}
P_0 [H_{Res_{\cA_*,J}} = \infty] \le \gamma_{I,J} .
\end{equation}
\end{proposition}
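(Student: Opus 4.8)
The plan is to follow the multiscale strategy of \cite{NitzSzni20} and \cite{ChiaNitz}, adapted to the simple random walk on $\IZ^d$. The key structural observation is that if $x \notin Res_{\cA_*,J}$, then for at least $|\cA_*| - (J-1) \ge (J+1)|\cA| - (J-1)$ of the scales $m \in \cA_*$ the ball $B(x, 2^{m+2}L_1)$ is \emph{not} balanced, i.e. either $U_1$ occupies more than a fraction $1-\wt\alpha$ of it, or less than $\wt\alpha$. Combined with hypothesis (\ref{3.11}), which forces $U_1$ to occupy at most half of $B(0,2^mL_1)$ for $m \in \cA$, one sees that along the trajectory of the walk there must be many scales at which the local density of $U_1$ swings from ``small'' to ``large'' (or the walk would have to travel absurdly far). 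The core of the argument is then a Borel--Cantelli-type / iterated-conditioning estimate: at each relevant scale, conditionally on the past, the walk has a probability bounded away from $1$ of moving from a region where $U_0$ is locally dominant to a region where $U_1$ is locally dominant \emph{without} passing through a balanced (hence resonant-contributing) intermediate ball. Iterating over the $\gtrsim I$ well-separated scales gives a bound of the form $\gamma_{I,J} = (1-c(J))^{cI}$, which tends to $0$ as $I \to \infty$ for each fixed $J$; one also gets the quantitative rate announced in (\ref{3.14}).

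Concretely, I would proceed as follows. First, set up the scale separation: with $L \ge L(J)$ and $\cA \subseteq (J+1)L\,\IN_* + h$ of cardinality $I$, the intermediate scales in $\cA_* \subseteq L\,\IN_* + h$ come in $I$ blocks of $J+1$ consecutive (in the arithmetic progression) scales, and the choice (\ref{3.2}) of $L(J)$ guarantees that the radii $2^{m}L_1$ for $m$ in distinct blocks are separated by a huge multiplicative gap (this is where $d2^{d-1}2^{-L} \le (200J)^{-1}$ enters, controlling the ``overspill'' of a ball of one scale into the annulus of the next). Second, define the stopping times $\tau_k$ at which the walk first enters the ball $B(0, 2^{m_k}L_1)$-scale regime where things become decidable, working from the coarsest scale inward; at the starting point $0$, hypothesis (\ref{3.11}) says $U_1$ is locally non-dominant at every scale $m \in \cA$. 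Third — the main step — show a one-step resonance lemma: there is $c(J) \in (0,1)$ such that for any admissible configuration, $P_x[\text{walk reaches the next inner scale with } U_1 \text{ locally dominant, without hitting } Res_{\cA_*,J} \mid \cF_{\tau_{k}}] \le 1 - c(J)$, uniformly in the starting point $x$ of that block and in the past. This is proved by a pigeonhole/averaging argument over the $J+1$ sub-scales within the block: if the walk avoids the resonance set, then at all $J+1$ of these sub-scales the corresponding ball is unbalanced, but its density cannot jump from $<\wt\alpha$-type to $>1-\wt\alpha$-type between consecutive (multiplicatively close, up to the controlled overspill) sub-scales without passing through $[\wt\alpha, 1-\wt\alpha]$ — hence the walk must already have exited a macroscopic neighborhood, an event of probability bounded away from $1$ by a standard exit/hitting estimate for the simple random walk using the Green function bounds (\ref{1.3}), (\ref{1.4}), (\ref{1.6}). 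Fourth, iterate: by the strong Markov property the probabilities multiply over the $I$ blocks, giving $P_0[H_{Res_{\cA_*,J}} = \infty] \le (1-c(J))^I =: \gamma_{I,J}$ (or a constant times $I$ in the exponent), and $r_{\min}(J)$ is chosen large enough that $L_1 \ge r_{\min}(J)$ makes the ball-density comparisons and the simple random walk estimates quantitatively valid (e.g. so that $2^{L(J)}L_1$-scale balls are genuinely large).

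I expect the main obstacle to be the \emph{geometric bookkeeping in the one-step resonance lemma}: making precise the claim that ``the local $U_1$-density cannot pass from $<\wt\alpha$ to $>1-\wt\alpha$ between two nearby scales without taking an intermediate value in $[\wt\alpha,1-\wt\alpha]$'' is not literally a continuity statement — the ball centers move with the walk and the radii double, so one must carefully quantify how much $\frac{|B(x,r)\cap U_1|}{|B(x,r)|}$ can change when $(x,r)$ changes to $(x',2r)$ with $|x-x'|$ controlled, and show that the jump per step is $< 1-2\wt\alpha$ once $L \ge L(J)$ and $L_1 \ge r_{\min}(J)$. The constant $\wt\alpha = \frac13 4^{-d}$ in (\ref{3.7}) and the constant $d2^{d-1}$ in (\ref{3.2}) are exactly calibrated for this, via the elementary volume bound $|B(x,2r) \triangle B(x',2r)| \le d\,2^{d-1}\,|x-x'|\,(2r)^{d-1}$ and $|B(x,r)| \ge |B(x,2r)|/2^d$. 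Once this lemma is in hand, the remaining pieces — the strong Markov iteration, the simple random walk exit estimates, and the choice of $\gamma_{I,J}$ and $r_{\min}(J)$ — are routine, and the bound (\ref{3.13}) follows; the detailed verification is deferred to the Appendix as announced.
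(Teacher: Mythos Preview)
Your overall plan follows the right references, but the ``one-step resonance lemma'' contains a genuine gap. You claim that if the walk avoids $Res_{\cA_*,J}$ then ``at all $J{+}1$ of these sub-scales the corresponding ball is unbalanced''; this is false. Avoiding $Res_{\cA_*,J}$ only means that at each point along the trajectory \emph{fewer than $J$} scales from $\cA_*$ are balanced, so within any single block of $J{+}1$ consecutive sub-scales up to $J{-}1$ of them may still be balanced. Your continuity-in-scale argument (``density cannot jump from $<\wt\alpha$ to $>1-\wt\alpha$ between consecutive sub-scales'') does not save this either: the radii of consecutive sub-scales differ by a factor $2^L$, so the volume ratio is $2^{dL}$, and the density can genuinely jump by nearly $1$ between them at a fixed center. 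What is actually needed is a construction that produces a \emph{single point} at which $J$ (in fact $J{+}1$) sub-scales are simultaneously balanced. The paper does this via Proposition~A.4: starting from a point where $\sigma_{m_0}\approx\tfrac12$, one defines successive stopping times $\gamma_0\le\cdots\le\gamma_J$ (each $\gamma_{j+1}$ being the first time after $\gamma_j$ that the next finer sub-scale density enters a slightly wider interval around $\tfrac12$), and shows that on an event $\cE$ of probability $\ge\check c_2(J)$ the single point $X_{\gamma_J}$ has all $J{+}1$ sub-scale densities in $[\wt\alpha,1-\wt\alpha]$. This is the missing mechanism, and it rests on Proposition~A.3 (a dichotomy/averaging lemma) rather than on continuity across scales.

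There is a second gap in your iteration. Even granting a one-step success probability $\check c_2(J)$ from each ``starting time'' $S_i$ (the time when some $\sigma_{\wh m_i}(X_\cdot)$ first crosses $\tfrac12$), the attempts are not conditionally independent: the attempt from $S_i$ costs the walk a displacement of order $2^{\wh m_i}L_1$, which can swallow many subsequent $S_{i+1},\dots,S_{i+\Delta}$ if $\wh m_i$ happens to be a large scale. The paper handles this through the $I$-family formalism and the recursion of Lemma~A.5: one splits the $I$ attempts into $\sim\sqrt I$ groups of size $\Delta=[\sqrt I]$; on the good event $\{T_i<S_{i+\Delta}\ \text{for all}\ i\}$ the groups are well-separated and one gets $(1-\check c_2(J))^{\sqrt I-1}$, while on the bad event one trades one automatically-balanced scale for a sub-$I$-family of size $\Delta$ and recurses from $\Gamma_{k+1}^{(J)}$ to $\Gamma_k^{(J)}$. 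Unwinding the recursion from $k=J$ down to $k=1$ loses a square root at each step, giving the rate $\exp\{-c\,I^{1/2^{J-1}}\}$ stated in (\ref{3.14}). Your claimed bound $\gamma_{I,J}=(1-c(J))^{cI}$ is therefore too strong and in fact inconsistent with the rate (\ref{3.14}) you say you also recover.
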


The proof of Proposition \ref{prop3.1} is similar to the proofs in \cite{NitzSzni20} and \cite{ChiaNitz}. The main steps are sketched in the Appendix. One actually has a more quantitative statement than (\ref{3.8}), see (\ref{A.43}):
\begin{equation}\label{3.14}
\mbox{for each $J \ge 1$, $\lim\limits_{I \r \infty} I^{-1/2^{J-1}} \log \gamma_{I,J} \le \log \big(1 - \check{c}_2 (J)\big) < 0$},
\end{equation}
with $\check{c}_2(J) \in (0,1)$ from (\ref{A.19}).

\medskip
In the proof of Theorem \ref{theo4.1} in the next section, we will apply Proposition \ref{3.1} to the random sets $\cU_0, \cU_1$ shifted at a point where on many well-separated scales $2^m \,L_1$, $\cU_0$ occupies at least half of the relative volume of the sup-norm ball of radius $2^m L_1$ centered at this point.

\section{Coarse graining of the the bubble set}
\setcounter{equation}{0}

This section contains Theorem \ref{theo4.1}, which is the central element of this article. It constructs a coarse grained object, namely a random set $C_\omega$ of low complexity whose equilibrium potential is close to $1$ on most of the bubble set $B u b$. This random set is made of $(\alpha, \beta, \gamma)$-good $B_0$-boxes for which the corresponding $D_0$-boxes (see (\ref{1.18})) have a ``local level'' $N_u(D_0) / {\rm cap} (D_0)$ at least $\beta$. Its purpose is to quantify the cost induced by the bubble set (about its specific use we refer to the proof of Proposition 4.1 in \cite{Szni21b}). The challenge in the construction of $C_\omega$ lies in the fact that the bubble set may be very irregular with little depth apart from its constitutive grains of size $L_1$. Theorem \ref{theo4.1} constitutes an important improvement on Theorem 3.1 of \cite{Szni21b}: loosely speaking, it shows that the $c_0$ of \cite{Szni21b} can be chosen arbitrarily close to $1$.

\medskip
We now specify the set-up. We assume that (see (\ref{1.16}))
\begin{align}
& 0 < u < \ov{u}, \label{4.1} 
\\[1ex]
& \mbox{$\alpha  > \beta > \gamma$ belong to $(u,\ov{u})$}, \label{4.2}
\\[1ex]
& 0 < \varepsilon < 10^{-3}. \label{4.3}
\end{align}

\n
Further, with $c_1$ as in Lemma \ref{lem1.2} and $c_2(\alpha,\beta,\gamma)$ as in Lemma \ref{lem1.3}, we assume that
\begin{equation}\label{4.4}
K \ge c_1 \vee c_2 (\alpha,\beta,\gamma).
\end{equation}

\n
We recall that the asymptotically negligible bad event $\cB_N$ is defined in (\ref{1.24}) and that $\ov{K} = 2 K + 3$. Here is the main result.

\begin{theorem}\label{theo4.1}
For any $a \in (0,1)$ and $u, \alpha, \beta, \gamma, \varepsilon, K$ as in (\ref{4.1}) - (\ref{4.4}), for large $N$ on $\cB^c_N$, one can construct a random subset $C_\omega \subseteq [-4N, 4N]^d$, which is a union of $B_0$-boxes and satisfies the following properties:
\begin{equation}\label{4.5}
\left\{ \begin{array}{rl}
{\rm i)} & \mbox{for all $B_0 \subseteq C_\omega$, $B_0$ is $(\alpha,\beta,\gamma)$-good and such that $N_u (D_0) \ge \beta \,{\rm cap} (D_0)$},
\\[1ex]
{\rm ii)} & \mbox{the base points of the $B_0 \subseteq C_\omega$ have mutual distance at least $\ov{K} \,L_0$},
\\[1ex]
{\rm iii)} & \mbox{the set $\cS_N$ of possible values of $C_\omega$ is such that  $| \cS_N| = \exp \{o(N^{d-2})\}$},
\\[1ex]
{\rm iv)} & \mbox{the $2 \ov{K}L_1$-neighborhood of $C_\omega$ has volume at most $\varepsilon \, |D_N|$},
\\[1ex]
{\rm v)} & \mbox{if $h_{C_\omega}$ stands for the equilibrium potential of $C_\omega$, see (\ref{1.5}), one has}
\\
& \mbox{$|\{x \in B u b$;  $h_{C_\omega} (x) < a\}| \le \varepsilon \,|D_N|$, with $B u b$ as in (\ref{1.26})}.
\end{array}\right.
\end{equation}
\end{theorem}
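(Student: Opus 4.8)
The plan is to build $C_\omega$ by a multi-scale classification of the $M$-adic boxes that meet the bubble set, combining the volume estimates of Section~2 with the resonance-set control of Section~3. First I would fix the target parameter $a\in(0,1)$ and, accordingly, an integer $J=J(a)$ large enough that the resonance threshold $\widetilde\alpha$ of \eqref{3.7} forces, via Proposition~\ref{prop3.1}, a failure probability $\gamma_{I,J}$ that is eventually $\exp\{o(N^{d-2})\}$-small once $I$ grows with $N$; the separation scale $L=L(J)$ and the depths $\widetilde p,\widetilde p\,'$ (entering as $q$ in Proposition~\ref{prop2.1}) are then chosen so that $(M^2/(3^d+1))^{-q/2}$ is small compared to $\varepsilon/(\text{number of boxes})$. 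I would then walk down the tower of $M$-adic scales over $D_N$: a box in $\cI_\ell$ that meets $Bub$ is declared to be of Type $\mathbb{A}$ if $\cU_0$ occupies at least half its relative volume on many well-separated sub-scales (so Proposition~\ref{prop3.1} applies), and of Type $\mathbb{b}$ or $\mathbb{B}$ otherwise, i.e.\ when $\cU_1$ — hence, via the structure of $\cU_1$ in \eqref{1.22} and the remark below \eqref{1.23}, a profusion of $(\alpha,\beta,\gamma)$-good $B_0$-boxes of $\partial_{B_0}\cU_1$ with $N_u(D_0)\ge\beta\,{\rm cap}(D_0)$ — is present.

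The second step is the extraction of capacity. For the non-rarefied (``substantial'') Type $\mathbb{b}$ and $\mathbb{B}$ boxes, the Wiener-type criterion built into the definition \eqref{2.8} of rarefied boxes guarantees that $(\alpha,\beta,\gamma)$-good $B_0$-boxes in $\partial_{B_0}\cU_1$ carrying substantial capacity are present on many scales; I would then invoke the isoperimetric input together with Lemma~\ref{lem1.1} to pass to a well-$\ov K L_0$-separated subfamily $\widetilde A$ with ${\rm cap}(\widetilde A)$ of the right order, and finally Lemma~\ref{lem1.2} to thin $\widetilde A$ down to $A'\subseteq\widetilde A$ whose \emph{volume} is controlled by its capacity while keeping a constant fraction of that capacity. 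The union of all these $A'$, over the coarse-grained skeleton of substantial boxes, is the candidate $C_\omega$. Property~i) holds by construction (each constituent $B_0$ is good with local level $\ge\beta$); ii) holds because Lemmas~\ref{lem1.1}–\ref{lem1.2} preserve the $\ov K L_0$-separation; iv) follows from the capacity-to-volume bound of Lemma~\ref{lem1.2} summed against the total capacity, which is $O(N^{d-2})$, so the $2\ov K L_1$-thickening has volume $o(|D_N|)$, and in particular $\le\varepsilon|D_N|$ for large $N$; iii) follows from the standard combinatorial count: the coarse-grained skeleton lives on $O(N^{d-2})$ sites at scale $L_1$ with bounded per-site information, so the number of shapes is $\exp\{o(N^{d-2})\}$ — here one also discards the asymptotically negligible events $\cB_N$ (Lemma~\ref{lem1.3}) and the rarefied-box contributions (Proposition~\ref{prop2.1}).

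The heart of the matter, and the proof of property~v), is to show that the set of bad points $\{x\in Bub:\ h_{C_\omega}(x)<a\}$ has volume $\le\varepsilon|D_N|$. I would decompose $Bub$ according to the Type of the smallest $M$-adic box containing a given point. Points lying in rarefied Type $\mathbb{b}$/$\mathbb{B}$ boxes contribute a total volume bounded by Proposition~\ref{prop2.1}, hence negligibly. For a point $x$ in a substantial Type $\mathbb{b}$/$\mathbb{B}$ box, the nearby well-separated good $B_0$-boxes feeding $C_\omega$ carry enough capacity, spread over enough scales around $x$, that a Wiener-criterion estimate for $h_{C_\omega}$ — i.e.\ $1-h_{C_\omega}(x)\le c\exp\{-c\sum_\ell {\rm cap}(C_\omega\cap \text{shell}_\ell(x))/R_\ell^{d-2}\}$ — pushes $h_{C_\omega}(x)$ above $a$. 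For a point $x$ in a Type $\mathbb{A}$ box, $\cU_0$ occupies at least half the relative volume on $\ge I$ well-separated scales, so by Proposition~\ref{prop3.1} a simple random walk from $x$ enters the resonance set $\widehat{Res}_J$ with probability $\ge 1-\gamma_{I,J}$; since $\widehat{Res}_J$ is (after the earlier steps) contained in $C_\omega$ up to a negligible set, $h_{C_\omega}(x)=P_x[H_{C_\omega}<\infty]\ge 1-\gamma_{I,J}>a$ for $I=I(a)$ large. Summing the three contributions gives $|\{x\in Bub:h_{C_\omega}(x)<a\}|\le\varepsilon|D_N|$. I expect the main obstacle to be exactly this Type $\mathbb{A}$/$\mathbb{b}$/$\mathbb{B}$ dichotomy at the coarse-graining stage: one must simultaneously make the resonance strength $J$ (hence $L(J)$ and the scale separation) large enough for Proposition~\ref{prop3.1} to beat the complexity, keep the depths $q=\widetilde p,\widetilde p\,'$ large enough for Proposition~\ref{prop2.1} to kill rarefied boxes, and still have room for $L_1\simeq N^{2/d}(\log N)^{1/d}$ to dominate $r_{\min}(J)$ and for the number of $M$-adic scales $\ell_N\simeq\log N$ to exceed all these fixed depths — a delicate but ultimately routine matching of the finitely many parameters against the asymptotics in $N$.
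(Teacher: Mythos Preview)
Your overall architecture is right --- the three-way classification into Types $\Ib$, $\IB$, $\IA$, the rarefied/substantial dichotomy handled via Proposition~\ref{prop2.1}, and the use of Lemmas~\ref{lem1.1}--\ref{lem1.2} for extraction --- and this matches the paper. But there is a genuine gap in your treatment of Type~$\IA$ boxes, and it reflects a misunderstanding of the role of the parameter~$J$.

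You assert that ``$\wh{\cR es}_J$ is (after the earlier steps) contained in $C_\omega$ up to a negligible set''. This is false and cannot be repaired: $\wh{\cR es}_J$ is a union of boxes in $\cI_p$ of nearly macroscopic size, whereas $C_\omega$ must (by property~i)) consist only of $(\alpha,\beta,\gamma)$-good $B_0$-boxes with $N_u(D_0)\ge\beta\,{\rm cap}(D_0)$, and (by property~iv)) must have small thickened volume. What the resonance property actually delivers is that for each box $B\subseteq\wh{\cR es}_J$ there are $J$ scales $\ell\in\cL_B$ on which both $\cU_0$ and $\cU_1$ occupy a non-degenerate volume fraction of $B(x_B,8R_\ell)$. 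One then has to run the \emph{same} isoperimetric extraction as for substantial Type~$\Ib$/$\IB$ boxes on each of these $J$ scales (this is (\ref{4.83})--(\ref{4.84}) and Proposition~\ref{prop4.2} in the paper), obtaining collections $\cC_{B,\ell}$ of good $B_0$-boxes with ${\rm cap}\big(\bigcup_{\cC_{B,\ell}}B_0\big)\ge c\,R_\ell^{d-2}$. The crucial missing second step is probabilistic: once the walk has entered some $B\subseteq\wh{\cR es}_J$, one applies the strong Markov property at the successive exit times of $B(x_B,10R_{\ell-1})$ over $\ell\in\cL_B$, using the one-scale estimate $P_x[H_{\bigcup_{\cC_{B,\ell}}B_0}<T_{B(x_B,10R_{\ell-1})}]\ge c_9$ of (\ref{4.69}), to conclude that the walk hits $C_\omega$ with probability at least $1-(1-c_9)^J$. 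This is precisely why $J$ is chosen as a function of $a$ (see (\ref{4.99})): one needs $(1-c_9)^J$, not $\gamma_{I,J}$, to be small compared to $1-a$. In your proposal the roles of $I$ and $J$ are confused: it is $I=I(a,J)$ that is chosen so that $\gamma_{I,J}\le(1-a)/10$ (see (\ref{4.8})), and all of $I,J,\wt{p},p$ are fixed constants depending only on $d,a,\ve$ --- none of them grows with~$N$.

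In short, the paper treats all three Types uniformly once the set $A$ of (\ref{4.64}) is reached: Type~$\IA$ boxes feed into $A$ via the resonance set (Proposition~\ref{prop3.1}), substantial Type~$\Ib$/$\IB$ boxes are already in $A$, and then the $J$-scale iterated hitting estimate of Proposition~\ref{prop4.2} carries the walk from $A$ into $C_\omega$. Your Wiener-criterion sketch for substantial boxes is morally this same iteration, but you need it for Type~$\IA$ as well, \emph{downstream} of the resonance step; the resonance set is a waypoint, not the destination.
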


We refer to the discussion below Theorem 3.1 of \cite{Szni21b} for an informal description of the use of the conditions in (\ref{4.5}). Here, the crucial novelty is that $a$ in v) can be chosen arbitrarily close to $1$. In Section 5 this leads to the improved asymptotic upper bound on the probability $\IP[\cA_N]$ of an excess of disconnected points in $D_N$, see Theorem \ref{theo5.1}. If $\ov{u}$ and $u_*$ coincide (as expected), this asymptotic upper bound matches in principal order the asymptotic lower bound in (\ref{0.10}), and thus yields the exponential rate of decay of $\IP[\cA_N]$, see (\ref{0.14}).

\medskip
At this point it is perhaps helpful to provide an informal description of the proof of Theorem \ref{theo4.1}. Loosely speaking, one considers nearly macroscopic boxes $B$ at depth $p$ (i.e.~in $\cI_p$, see (\ref{2.3})) that meet the bubble set $B u b$, where $p$ as chosen in (\ref{4.11}), eventually only depends on $d,a$ and $\ve$, see (\ref{4.99}). One classifies these boxes into Types $\Ib, \IB$, and $\IA$, see (\ref{4.13}), (\ref{4.16}), and (\ref{4.17}). When $N$ is large, all $B_1$-boxes contained in $Bub$ are almost contained in $\cU_0$, see (\ref{1.26}), (\ref{1.23}). The much bigger (nearly macroscopic) boxes $B$ of Types $\Ib$ or $\IB$ loosely speaking correspond to ``boundary boxes of $\cU_0$'', where in the case of Type $\Ib$, $\cU_1$ occupies more than half the volume of $B$, see (\ref{4.13}), and in the case of Type $\IB$ something similar takes place at an intermediate scale not much bigger than that of $B$, see (\ref{4.16}). The boxes $B$ of Type $\IA$ correspond to ``inner boxes of $\cU_0$'', where on quite a few well-separated intermediate scales above that of $B$, the random set $\cU_0$ occupies more than half of the volume.

\medskip
The boxes of Types $\Ib$ and $\IB$ are then classified as either {\it rarefied} or {\it substantial}, see (\ref{4.35}), (\ref{4.36}) for Type $\Ib$, and (\ref{4.47}), (\ref{4.48}) for Type $\IB$. The volume estimates from Section 2 (stemming from the {\it method of enlargement of obstacles}) allow to neglect the rarefied boxes, see (\ref{4.39}), (\ref{4.40}) for Type $\Ib$, and (\ref{4.51}), (\ref{4.52}) for Type $\IB$. On the other hand, for substantial boxes $B$ of Types $\Ib$ and $\IB$ a type of Wiener criterion holds, see (\ref{4.36}), (\ref{4.48}). Using isoperimetric controls going back to \cite{DeusPisz96} one can infer the substantial ``surface-like presence'' of $(\alpha,\beta,\gamma)$-good $B_0$-boxes of $\partial_{B_0} \,\cU_1$ in many scales above $B$. As observed below (\ref{1.23}), the inequality $N_u(D_0) \ge \beta \, {\rm cap}(D_0)$ necessarily holds for these $B_0$-boxes. In the case of boxes $B$ of Type $\IA$, the methods of Section 3 (concerning {\it resonance sets}) apply instead. They show that when starting in $B$ the simple random walk enters with a ``high probability'' (specifically at least $1 - (1-a)/10$) a certain resonance set $\wh{Res}_J$, see (\ref{4.56}), (\ref{4.62}). On this resonance set again a similar procedure can be performed to extract on many scales a ``surface-like presence'' of $(\alpha,\beta,\gamma)$-good $B_0$-boxes with $N_u(D_0) \ge \beta \, {\rm cap}(D_0)$. Then, in Proposition \ref{prop4.2}, we extract a collection of $(\alpha,\beta,\gamma)$-good $B_0$-boxes with $N_u(D_0) \ge \beta \, {\rm cap}(D_0)$, having base points at mutual distance at least $\ov{K} L_0$, such that for each box $B$ of Types $\Ib$ and $\IB$, or contained in $\wh{Res}_J$, on a large number of scales above $B$, the capacity of the $B_0$-boxes contained within the box of that scale remains comparable to the capacity of that box. The Lemmas \ref{lem1.1} and \ref{lem1.2} enter the proof of Proposition \ref{prop4.2}. Once Proposition \ref{prop4.2} is established, Theorem \ref{theo4.1} quickly follows.

\bigskip
\n
{\it Proof of Theorem \ref{theo4.1}:} We recall the assumptions (\ref{4.1}) - (\ref{4.4}) on the parameters. We assume that we are on the event
\begin{equation}\label{4.6}
\begin{split}
\Omega_{\ve,N} =  &\;\mbox{$\{|B u b| \ge \ve\, |D_N|\} \backslash \,\cB_N$ (on the complementary event $\{|Bub| < \ve\, |D_N|\} \backslash \,\cB_N$}
\\
&\;\;\;   \mbox{in $\cB^c_N$ we will simply set $C_\omega = \phi)$}.
\end{split}
\end{equation}

\n
We recall the dyadic integer $M$ (solely depending on $d$) from (\ref{2.1}), as well as the naturally nested collections of boxes $\cI_\ell$ (at depth $\ell$), for $0 \le \ell \le \ell_N$, see (\ref{2.3}). The size of a box in $\cI_\ell$ is $R_\ell = M^{\ell_N - \ell} \, L_1$, see (\ref{2.23}). We also recall that when $B \in \cI_\ell$, then for $0 \le \ell' \le \ell$ the notation $B^{(\ell ')}$ refers to the unique box of $\cI_{\ell '}$ that contains $B$, see (\ref{2.4}).

\medskip
We now pick some integer
\begin{equation}\label{4.7}
J \ge 1.
\end{equation}

\medskip\n
We will later choose $J$ as a function of $d \ge 3$ and $a \in (0,1)$, see (\ref{4.99}). Having in mind the estimates on the resonance set from Proposition \ref{prop3.1}, we pick in the notation of (\ref{3.8})
\begin{equation}\label{4.8}
\mbox{$I(a,J) \ge 1$ such that $\gamma_{I,J} \le (1 - a) / 10$}
\end{equation}

\n
(and view for the time being $I$ as a function of $d,a$, and $J$). Next, with the volume estimates for rarefied boxes from Proposition \ref{prop2.1} in mind, we choose an integer, to be later interpreted as a depth:
\begin{equation}\label{4.9}
\mbox{$\wt{p} (a, \ve, J) \ge 4$ such that} \left\{ \begin{array}{rl}
{\rm i)} & c_3\, \Big(\mbox{\f $\dis\frac{M^2}{3^d+ 1}$}\Big)^{- \wt{p} /2} \le \ve / (10 \, 2^{d+1}), \mbox{with $c_3$ from (\ref{2.10})},
\\[2ex]
{\rm ii)} & \wt{p} \ge 2 \, \wh{c} \, J/\delta, \; \mbox{with $\wh{c}, \delta$ as in (\ref{1.8}), (\ref{2.7})}.
\end{array}\right.
\end{equation}
Since $\wt{p} \ge 4$ and $M^2 > 3^d + 1$ (see (\ref{2.1})), one readily checks that
\begin{equation}\label{4.10}
M^{\wt{p}} \ge M^4 > 9^d > 3d \,4^d \; \mbox{(using induction on $d \ge 3$ for the last inequality)}.
\end{equation}
With $L(J)$ as in (\ref{3.2}) we also define the depth
\begin{equation}\label{4.11}
p(a,\ve,J) = 5 + I(J+1) \,L(J) + \wt{p}.
\end{equation}

\n
From now on we implicitly assume $N$ large enough so that (in the notation of (\ref{3.9}), (\ref{2.9}), (\ref{2.2}), and (\ref{1.25})):
\begin{equation}\label{4.12}
\mbox{$L_1 > r_{\min} (J), \,L_1^{d-2} g(0,0) > 2 c_*\,M^d, \ell_N > p$, and $|{\rm Deep} \,B_1| > \fr \;|B_1|$ for any $L_1$-box}.
\end{equation}

\n
We will now classify the boxes $B \in \cI_p$ intersecting $B u b \;(\subseteq D_N)$, see (\ref{1.26}), into three types (namely $\Ib$, $\IB$, and $\IA$). Informally, the boxes of Types $\Ib$ and $\IB$ correspond to ``boundary boxes in $\cU_0$'', whereas boxes of Type $\IA$ correspond to ``inner boxes in $\cU_0$''. More precisely, we say that
\begin{equation}\label{4.13}
\mbox{$B$ is of Type $\Ib$ if $B \in \cI_p$, $B \cap B u b \not= \phi$, and $|B \cap \cU_0| < \fr \; |B|$}.
\end{equation}
We now describe Types $\IB$ and $\IA$. To this end, for $B \in \cI_p$ and $0 \le \ell \le p$ we define
\begin{equation}\label{4.14}
\Delta_{B,\ell} = x_B + [-2R_\ell, 2R_\ell)^d,
\end{equation}

\n
where we recall that $x_B$ stands for the base point of $B$ (see the beginning of Section 1) and $R_\ell$ is defined in (\ref{2.23}). We note that with $B$ and $\ell$ as above
\begin{equation}\label{4.15}
\left\{ \begin{array}{rl}
{\rm i)} & x_B \in B^{(\ell)} = x_{B^{(\ell)}} + [0,R_\ell)^d, 
\\[1ex]
{\rm ii)} & B = x_B + [0,R_p)^d \subseteq B^{(\ell)} \subseteq x_B + [-2R_\ell, 2 R_\ell)^d = \Delta_{B,\ell} \subseteq B(x_B, 2R_\ell).
\end{array}\right.
\end{equation}
We then say that
\begin{equation}\label{4.16}
\left\{ \begin{array}{l}
\mbox{$B$ is of Type $\IB$ if $B \in \cI_p$, $B \cap B u b \not= \phi$, $|B \cap \cU_0| \ge \fr \; |B|$}
\\
\mbox{(so $B$ is not of Type $\Ib$) and for some}
\\
\mbox{$\ell \in \cS \stackrel{\rm def}{=} \{p-4 - (J + 1) \,L(J), p-4-2 (J+1) \,L(J), \dots, \wt{p} + 1\}$, one has}
\\
\mbox{$|\Delta_{B,\ell} \cap \cU_0 | < \mbox{\f $\dis\frac{3}{4}$} \; | \Delta_{B,\ell}|$}
\end{array}\right.
\end{equation}
(we recall that $\wt{p} + 1 \stackrel{(\ref{4.11})}{=} p-4 - I(J+1) \,L(J))$.

\medskip
Finally, we say that
\begin{equation}\label{4.17}
\left\{ \begin{array}{l}
\mbox{$B$ is of Type $\IA$, if $B \in \cI_p$, $B \cap B u b \not= \phi$, $|B \cap \cU_0| \ge \fr \; |B|$ and}
\\
\mbox{for all $\ell \in \cS$, $|\Delta_{B,\ell} \cap \cU_0| \ge \mbox{\f $\dis\frac{3}{4}$} \,|\Delta_{B,\ell}|$}.
\end{array}\right.
\end{equation}

\n
Note that the three types are mutually exclusive. Also for $1 \le \ell \le p$, one has $4R_\ell \le 4 N/ M < N$, so that by (\ref{4.15}) ii) (and $Bub \subseteq D_N = [-N,N]^d)$
\begin{equation}\label{4.18}
\mbox{for any $B \in \cI_p$ such that $B \cap Bub \not= \phi$ and $1 \le \ell \le p$, $\Delta_{B,\ell} \subseteq [-2N,2N]^d$}.
\end{equation}
\begin{samepage}
Moreover, one has the inclusion
\begin{equation}\label{4.19}
Bub \subseteq \big(\bigcup\limits_{{\rm Type} \, \Ib} B\big) \cup \big(\bigcup\limits_{{\rm Type} \, \IB} B\big) \cup \big(\bigcup\limits_{{\rm Type} \, \IA} B\big)
\end{equation}
and the above three sets in parentheses are pairwise disjoint.
\end{samepage}

\medskip
Each type will require a different treatment. We start with the case of boxes of Type~$\Ib$. Our next goal is to establish (\ref{4.39}) (\ref{4.40}).

\medskip
We note that when $B_1$ is an $L_1$-box and $B \in \cI_p$ is of type $\Ib$ with $B_1 \subseteq B \cap B ub$, then by (\ref{4.12}) we have $|B_1 \cap \, \cU_0| \ge \frac{1}{2} \;|B_1|$ but at the same time $|B \cap \cU_0| < \frac{1}{2} \,|B|$ since $B$ is of Type $\Ib$. We then define
\begin{equation}\label{4.20}
\begin{array}{l}
\mbox{$\wt{B}(B_1)$ the maximal box in the tower above $B_1$ that is contained in $B$ and}
\\
\mbox{such that $|\wt{B}(B_1) \cap \cU_0| \ge \fr\; |\wt{B}(B_1)|$},
\end{array}
\end{equation}
as well as
\begin{equation}\label{4.21}
\mbox{$B'(B_1)\;(\subseteq B)$ the box immediately above $\wt{B}(B_1)$ in that tower}.
\end{equation}
Thus, for $B_1 \subseteq B u b \cap B$ with $B$ of Type $\Ib$, one has
\begin{equation}\label{4.22}
\begin{array}{l}
\mbox{$B_1 \subseteq B'(B_1) \subseteq B \subseteq \ov{D}_N \stackrel{(\ref{2.25})}{\subseteq} [-2N, 2N]^d$, and}
\\
\mbox{$\mbox{\f $\dis\frac{1}{2M^d}$} \; |B'(B_1)| = \fr \;|\wt{B}(B_1)| \le | \wt{B}(B_1) \cap \cU_0| \le |B'(B_1) \cap \cU_0| < \fr \;|B' (B_1)|$}.
\end{array}
\end{equation}
Moreover, we note that
\begin{equation}\label{4.23}
\begin{array}{l}
\mbox{when $B$ is of Type $\Ib$, then as $B_1 \subseteq B u b \cap B$ varies, the corresponding $B'(B_1)$}
\\
\mbox{are pairwise disjoint or comparable for the inclusion relation}.
\end{array}
\end{equation}

\n
As an aside the boxes $B'(B_1)$ can be ``very small'' with a size close to $L_1$. In particular, they can be much smaller than $B \;(\in \cI_p)$. We then introduce an arbitrary enumeration
\begin{equation}\label{4.24}
\begin{array}{l}
\mbox{$B'_j$, $1 \le j \le J^{\Ib}$, of the maximal elements for the inclusion relation of the}
\\
\mbox{$B'(B_1)$, with $B_1 \subseteq Bub \cap (\bigcup_{{\rm Type}\,\Ib} B)$},
\end{array}
\end{equation}
so that the boxes
\begin{equation}\label{4.25}
\begin{array}{l}
\mbox{$B'_j, 1 \le j \le J^{\Ib}$, are pairwise disjoint, contained in $\bigcup_{{\rm Type}\,\Ib} B \subseteq \ov{D}_N \subseteq [-2N,2N]^d$},
\\
\mbox{and cover $Bub \cap (\bigcup_{{\rm Type}\,\Ib} B)$}.
\end{array}
\end{equation}
By (\ref{4.22}) we know that $\frac{1}{2M^d}\, |B'_j| \le |B'_j \cap\, \cU_0| < \frac{1}{2} \;|B'_j |$ for each $1 \le j \le J^{\Ib}$, and when $N$ is large, we can find a nearly concentric box $\wh{B} '_j$ in $B'_j$, union of $B_0$-boxes, such that $|B'_j \backslash \wh{B} '_j| \le \frac{1}{4M^d} |B'_j|$, and the distance of $\wh{B} '_j$ to the inner boundary of $B'_j$ is at least $c(M) \,L_1$. As a result we have $|\wh{B} '_j| \ge (1- (4M^d)^{-1}) \,|B'_j| \ge \frac{2}{3} \,|B'_j|$, so that
\begin{equation}\label{4.26}
\mbox{\f $\dis\frac{1}{4M^d}$} \;|\wh{B} '_j| \le |\wh{B} '_j \cap \cU_0| < \fr \;|B'_j| \le \mbox{\f $\dis\frac{3}{4}$} \;|\wh{B}'_j |.
\end{equation}

\n
\begin{samepage}
So, by the isoperimetric controls (A.3) - (A.6), p.~480-481 of \cite{DeusPisz96}, we see that when $N$ is large on the event $\Omega_{\ve,N}$ in (\ref{4.6})
\begin{equation}\label{4.27}
\left\{\begin{array}{l}
\mbox{for each $1 \le j \le J^{\Ib}$ one can find a coordinate projection $\pi_j '\,^{\!\!,\,\Ib}$ and at least}
\\
\mbox{$c_4 (|B'_j|\, /\, |B_0|)^{\frac{d-1}{d}} B_0$-boxes in $\partial_{B_0}\, \cU_1$ (see below (\ref{1.23}) for notation) in distinct}
\\[0.5ex]
\mbox{$\pi_j '\,^{\!\!,\,\Ib}$-columns, with base points at distance at least $c_5\,L_1$ from $\partial B'_j$}.
\end{array}\right.
\end{equation}

\n
(We recall that $M$ in (\ref{2.1}) is viewed as a dimension dependent constant).
\end{samepage}

\medskip
We then introduce the set of good indices:
\begin{equation}\label{4.28}
\begin{array}{l}
\cG^{\Ib} = \{j \in \{1,\dots,J^{\Ib}\}; 
\\
\mbox{there are at most $\frac{1}{2} \;c_4(|B'_j|\, /\, |B_0|)^{\frac{d-1}{d}} (\alpha,\beta,\gamma)$-bad $B_0$-boxes in $B'_j\}$}.
\end{array}
\end{equation}

\n
As remarked below (\ref{1.23}), when $B_0$ is an $(\alpha,\beta,\gamma)$-good box in $\partial_{B_0} \,\cU_1$, then $N_u(D_0) \ge \beta \, {\rm cap}(D_0)$ holds as well. We thus find that
\begin{equation}\label{4.29}
\left\{ \begin{array}{l}
\mbox{for each $j \in \cG^{\Ib}$ one has a coordinate projection $\pi_j '\,^{\!\!,\,\Ib}$ and at least}
\\
\mbox{$\frac{1}{2} \; c_4(|B'_j| \,/ \, |B_0|)^{\frac{d-1}{d}} B_0$-boxes in distinct $\pi_j '\,^{\!\!,\,\Ib}$-columns, which are $(\alpha,\beta,\gamma)$-good,}
\\[0.5ex]
\mbox{with $N_u(D_0) \ge \beta\, {\rm cap}(D_0)$, and contained in $B'_j$, with base points at distance}
\\[0.5ex]
\mbox{at least $c_5 \,L_1$ from $\partial B'_j$}.
\end{array}\right.
\end{equation}
We then define for $1 \le j \le J^{\Ib}$,
\begin{equation}\label{4.30}
a'_j = |B'_j| \,/\, \big(\dis\sum\limits_{1 \le k \le J^{\Ib}} |B'_k|\big).
\end{equation}
Then, in the same spirit as below (3.26) of \cite{Szni21b}, one finds that
\begin{equation}\label{4.31}
\begin{array}{l}
\frac{1}{2} \;c_4 \,\big|Bub \cap \big(\bigcup_{{\rm Type}\,\Ib} B\big)\big|^{\frac{d-1}{d}} \dsl_{j \notin \cG^{\Ib}} a'_j \,^{\!\!\frac{d-1}{d}} \stackrel{(\ref{4.25}),(\ref{4.30})}{\le}
\\
\frac{1}{2}\; c_4 \; \dsl_{j \notin \cG^{\Ib}}  |B'_j|^{\frac{d-1}{d}} \stackrel{(\ref{4.28})}{\le} \dsl_{j \notin \cG^{\Ib}}  \; \dsl_{B_0 \subseteq B'_j} |B_0|^{\frac{d-1}{d}} \;1 
\{B_0\; \mbox{is}\;  (\alpha,\beta,\gamma)\mbox{-bad}\} \stackrel{(\ref{4.25})}{\le}
\\
L_0^{d-1} \;\dsl_{B_0 \subseteq \ov{D}_N} \;1\{B_0 \;\mbox{is} \; (\alpha,\beta,\gamma)\mbox{-bad}\} \stackrel{(\ref{4.6}), (\ref{1.24})}{\le} L_0^{d-1} \,\rho(L_0)\, N^{d-2},
\end{array}
\end{equation}

\n
where in the last step we have used that $\Omega_{\ve, N} \subseteq \cB^c_N$ (with $\cB_N$ defined in (\ref{1.24})). 

\medskip
We now introduce the event
\begin{equation}\label{4.32}
\wt{\Omega}_{\ve, N} = \big\{\big| Bub \cap \big(\bigcup\limits_{{\rm Type}\,\Ib} B\big)\big| \ge \mbox{\f $\dis\frac{\ve}{20}$} \; |D_N|\big\}.
\end{equation}

\n
In essence, on the complement of $\wt{\Omega}_{\ve, N}$ in $\Omega_{\ve,N}$, see (\ref{4.6}), we will simply ``discard'' the whole set $B u b \cap (\bigcup_{{\rm Type}\,\Ib} B)$ to achieve (\ref{4.39}), (\ref{4.40}). The main work pertains to the event $\Omega_{\ve,N} \cap \wt{\Omega}_{\ve, N}$. On this event we find by (\ref{4.31}) that
\begin{equation}\label{4.33}
\Big(\dsl_{j \notin \cG^{\Ib}} a'_j\Big)^{\frac{d-1}{d}} \le \dsl_{j \notin \cG^{\Ib}} a' _j \,^{\!\!\frac{d-1}{d}} \stackrel{(\ref{4.31}),(\ref{4.32})}{\le} \mbox{\f $\dis\frac{2}{c_4}$} \;L_0^{d-1} \,\rho(L_0) \,N^{d-2} \, / \, \Big(\mbox{\f $\dis\frac{\ve}{20}$} \; |D_N|\Big)^{\frac{d-1}{d}}.
\end{equation}
Taking the $\frac{d}{d-1}$-th power of the above inequality, we find in view of (\ref{4.30}) and the definition of $L_0$ in (\ref{1.9}) that for large $N$ on $\Omega_{\ve,N} \cap \wt{\Omega}_{\ve,N}$
\begin{equation}\label{4.34}
\dsl_{j \notin \cG^{\Ib}} |B'_j| \le \mbox{\f $\dis\frac{c'}{\ve}$} \;\rho (L_0)^{\frac{d}{d-1}} \;\dsl_{1 \le j \le J^{\Ib}} |B'_j| \stackrel{(\ref{4.25})}{\le} \mbox{\f $\dis\frac{c}{\ve}$} \;\rho(L_0)^{\frac{d}{d-1}} \,|D_N| \le \mbox{\f $\dis\frac{\ve}{20}$} \;|D_N|.
\end{equation}

\n
We now classify the boxes of Type $\Ib$ as either rarefied or substantial. We recall the definition of $\delta$ in (\ref{2.7}), as well as that of $\wt{p}$ in (\ref{4.9}). We work on the event $\Omega_{\ve,N}$ from (\ref{4.6}). We say that a box $B$ of Type $\Ib$ is {\it rarefied} if
\begin{equation}\label{4.35}
\dsl_{B^{(\wt{p})} \subseteq \wt{B} \subseteq B^{(1)}} \; \dis\frac{{\rm cap}\big(\wt{B} \cap (\bigcup_{j \in \cG^{\Ib}} B'_j)\big)}{|\wt{B}|^{\frac{d-2}{d}}} < \delta \, \wt{p} ,
\end{equation}

\n
where the above sum runs over the boxes $\wt{B}$ in the tower above $B^{(\wt{p})}$ (see (\ref{2.4}) for notation) that are contained in $B^{(1)}$. Further, we say that $B$ of Type $\Ib$ is {\it substantial} if
\begin{equation}\label{4.36}
\dsl_{B^{(\wt{p})} \subseteq \wt{B} \subseteq B^{(1)}} \; \dis\frac{{\rm cap}\big(\wt{B} \cap (\bigcup_{j \in \cG^{\Ib}} B'_j)\big)}{|\wt{B}|^{\frac{d-2}{d}}} \ge \delta \, \wt{p} .
\end{equation}
Thus, $B$ of Type $\Ib$ is rarefied when $B^{(\wt{p})}$ is rarefied in the sense of (\ref{2.8}) with the choices $q = \wt{p}$ and $F = \bigcup_{j \in \cG^{\Ib}} B'_j$. By (\ref{4.12}) the condition (\ref{2.9}) is satisfied and the controls from Proposition \ref{prop2.1} now yield that
\begin{equation}\label{4.37}
\dsl_{B: {\rm Type}\, \Ib, {\rm rarefied}} \big| B \cap \big(\dis\bigcup\limits_{j \in \cG^{\Ib}} B'_j\big)\big| \le c_3 \, |\ov{D}_N| \; \Big(\mbox{\f $\dis\frac{M^2}{3^d + 1}$}\Big)^{-\wt{p}/2} \stackrel{(\ref{4.9}),(\ref{2.25})}{\le} \mbox{\f $\dis\frac{\ve}{20}$} \;|D_N| .
\end{equation}
We then set
\begin{equation}\label{4.38}
\Gamma^{\Ib} = \Big\{\Big(\bigcup_{B: {\rm Type}\, \Ib, {\rm rarefied}} B \cap \big(\bigcup\limits_{j \in \cG^{\Ib}} B'_j\big)\Big) \cup \big(\bigcup\limits_{j \notin \cG^{\Ib}} B'_j\big)\Big\} \cap Bub \; (\subseteq D_N).
\end{equation}

\n
We thus find by (\ref{4.25}), (\ref{4.38}) that for large $N$ on the event $\Omega_{\ve,N}$ in (\ref{4.6})
\begin{equation}\label{4.39}
Bub \cap \big(\bigcup\limits_{{\rm Type}\, \Ib} B\big) \subseteq \big(\bigcup_{B: {\rm Type}\, \Ib, {\rm substantial}} B\big)  \cup \Gamma^{\Ib} \subseteq \ov{D}_N
\end{equation}
and observe that on $\Omega_{\ve,N} \backslash \wt{\Omega}_{\ve,N}$
\begin{equation*}
 | \Gamma^{\Ib}| \stackrel{(\ref{4.25})}{\le} \big| Bub \cap \big(\bigcup_{{\rm Type}\,\Ib} B\big)\big| \stackrel{(\ref{4.32})}{\le} \mbox{\f $\dis\frac{\ve}{20}$}\;|D_N|, 
\end{equation*}
whereas on $\Omega_{\ve,N} \cap \wt{\Omega}_{\ve,N}$ one has
\begin{equation*}
 | \Gamma^{\Ib}| \stackrel{(\ref{4.37}),(\ref{4.34})}{\le}  \mbox{\f $\dis\frac{\ve}{10}$}\;|D_N|, 
 \end{equation*}
so that (for large $N$ on $\Omega_{\ve,N}$)
\begin{equation}\label{4.40}
| \Gamma^{\Ib}| \le \mbox{\f $\dis\frac{\ve}{10}$}\;|D_N|. 
\end{equation}

\n
We now turn to the treatment of boxes of Type $\IB$, see (\ref{4.16}). Our next goal is to establish in (\ref{4.51}), (\ref{4.52}) an analogue of (\ref{4.39}), (\ref{4.40}).

\medskip
We thus consider a box $B$ of Type $\IB$. So $|B \cap \cU_0| \ge \frac{1}{2} \;|B|$ and there is a largest element $\ell_0$ in $\cS$ (see (\ref{4.16})) such that $|\Delta_{B,\ell_0} \cap \cU_0| < \frac{3}{4} \; |\Delta_{B, \ell_0}|$. Note that by (\ref{4.14}), (\ref{4.15}), $B \subseteq \Delta_{B,p}$ and $|\Delta_{B,p}| = 4^d \,|B|$, so that
\begin{equation}\label{4.41}
\mbox{\f $\dis\frac{1}{2 \cdot 4^d}$} \; |\Delta_{B,p}| = \fr \;|B| \stackrel{(\ref{4.16})}{\le} |\Delta_{B,p} \cap \cU_0 |, \; |\Delta_{B,\ell_0} \cap \cU_0| < \mbox{\f $\dis\frac{3}{4}$}\;| \Delta_{B,\ell_0}|, \;\mbox{and $\wt{p} < \ell_0 < p$}.
\end{equation}
We then consider the set $\{\ell \in [\ell_0,p]$; $|\Delta_{B,\ell} \cap \cU_0| \ge  \frac{3}{4} \; | \Delta_{B,\ell} |\}$.

\medskip
Either this set is empty, in which case one has
\begin{equation}\label{4.42}
\mbox{\f $\dis\frac{1}{2 \cdot 4^d}$} \; |\Delta_{B,p}| \le  |\Delta_{B,p} \cap \cU_0 |  < \mbox{\f $\dis\frac{3}{4}$}\;  |\Delta_{B,p}|,
\end{equation}

\n
or this set is not empty, and we consider $\ell'_0$ its smallest element which (by definition of $\ell_0$) is bigger than $\ell_0$. We set $\ell_1 = \ell'_0 -1 \ge \ell_0$, so that $\ell_1$ does not belong to the set above (\ref{4.42}) and hence
\begin{equation}\label{4.43}
\mbox{\f $\dis\frac{3}{4}$}\;  |\Delta_{B,\ell_1}| >  |\Delta_{B,\ell_1} \cap \cU_0| \stackrel{\Delta_{B,\ell_1} \supseteq \Delta_{B,\ell'_0}}{\ge}  |\Delta_{B,\ell'_0} \cap \cU_0| \ge \mbox{\f $\dis\frac{3}{4}$}\;  |\Delta_{B,\ell'_0}| \stackrel{(\ref{4.14})}{=} \mbox{\f $\dis\frac{3}{4 M^d}$}\;  |\Delta_{B,\ell_1}|
\end{equation}

\n
(in the second and third inequality we respectively used that $\Delta_{B,\ell_1} \supseteq \Delta_{B,\ell'_0}$ and  that $\ell'_0$ belongs to the set above (\ref{4.42})).

\medskip
In either case (whether the set above (\ref{4.42}) is empty or not) we have a
\begin{equation}\label{4.44}
\begin{array}{l}
\mbox{$\Delta '(B)$ of the form $\Delta_{B,\ell}$ where $\ell$ is  the smallest element of $[\wt{p} + 1, p]$ such that}
\\
\mbox{\f $\dis\frac{3}{4}$} \; |\Delta'(B)| \ge |\Delta '(B) \cap \cU_0| \ge \mbox{\f $\dis\frac{3}{4M^d}$} \;|\Delta ' (B)|
\end{array}
\end{equation}
(we recall that $M \ge 8$, see (\ref{2.1}), so $2 \cdot 4^d \le \frac{4}{3} \;M^d$).

\medskip
Next, we note that $\Delta '(B)$ has size at least $4 R_p$ (and at most $4 R_{\wt{p} + 1})$, and the number of columns of $B_0$-boxes in any given coordinate direction that are contained in $\Delta '(B)$ is at least $(4 R_p/L_0)^{d-1} \stackrel{(\ref{2.23})}{\ge} c\,M^{-(p+1)(d-1)} (N/L_0)^{d-1} \stackrel{(\ref{1.9})}{\ge} c\, M^{-(p+1)(d-1)} N^{d-2} \gg \rho(L_0) \, N^{d-2}$. As a result, for large $N$ on $\Omega_{\ve,N} \subseteq \cB^c_N$, see (\ref{4.6}), for any box $B$ of Type $\IB$, in any coordinate direction only a vanishing fraction of columns of $B_0$-boxes in $\Delta '(B)$ contains an $(\alpha,\beta,\gamma)$-bad $B_0$-box.

\medskip
So we consider an enumeration
\begin{equation}\label{4.45}
\begin{array}{l}
\mbox{$\Delta '_j$, $1 \le j \le J^\IB$ of the $\Delta '(B)$, where $B$ runs over the collection of}
\\
\mbox{Type $\IB$ boxes, which thus satisfies $\bigcup_{{\rm Type}\,\IB} B \subseteq \bigcup_{1 \le j \le J^{\IB}} \Delta'_j$}.
\end{array}
\end{equation}

\n
We can apply isoperimetric considerations (as in (\ref{4.27}), (\ref{4.29})), where we now leverage the above mentioned rarity of $(\alpha,\beta,\gamma)$-bad boxes, to obtain that
\begin{equation}\label{4.46}
\left\{ \begin{array}{l}
\mbox{for large $N$, on the event $\Omega_{\ve,N}$ in (\ref{4.6}), for any $1 \le j \le J^{\IB}$, one can find}
\\
\mbox{a coordinate projection $\pi_j '\,^{\!\!,\,\IB}$ and at least $c_6 \, (|\Delta'_j| \,/ \, |B_0|)^{\frac{d-1}{d}}$ $B_0$-boxes}
\\
\mbox{of $\partial_{B_0} \,\cU_1$ in distinct $\pi_j '\,^{\!\!,\,\IB}$-columns, contained in $\Delta'_j$, all $(\alpha,\beta,\gamma)$-good,}
\\
\mbox{with $N_u(D_0) \ge \beta \,{\rm cap}(D_0)$, and with base points at distance at least $c_7\, L_1$}
\\[0.3ex]
\mbox{from $\partial \Delta'_j$}.
\end{array}\right.
\end{equation}

\n
Recall $\delta$ and $\wt{p}$ from (\ref{2.7}) and (\ref{4.9}). We then say that a box $B$ of Type $\IB$ is {\it rarefied} when (with analogous notation as in (\ref{4.35}), (\ref{4.36})):
\begin{equation}\label{4.47}
\dsl_{B^{(\wt{p})} \subseteq \wt{B} \subseteq B^{(1)}} \; \dis\frac{{\rm cap}\big(\wt{B} \cap (\bigcup_{1 \le j \le J^{\IB}} \Delta'_j)\big)}{|\wt{B}|^{\frac{d-2}{d}}} < \delta \, \wt{p} ,
\end{equation}
and that it is {\it substantial} when
\begin{equation}\label{4.48}
\dsl_{B^{(\wt{p})} \subseteq \wt{B} \subseteq B^{(1)}} \; \dis\frac{{\rm cap}\big(\wt{B} \cap (\bigcup_{1 \le j \le J^{\IB}} \Delta'_j)\big)}{|\wt{B}|^{\frac{d-2}{d}}} \ge \delta \, \wt{p} .
\end{equation}

\n
The volume controls of Proposition \ref{prop2.1} (with $q = \wt{p}$ and $F = \bigcup_{1 \le j \le J^{\IB}} \Delta'_j$), noting that (\ref{2.9}) holds due to (\ref{4.12}), now yield
\begin{equation}\label{4.49}
\dsl_{B: {\rm Type}\, \IB, {\rm rarefied}} \big| B \cap \big(\dis\bigcup\limits_{1\le j \le J^{\IB}} \Delta'_j\big)\big| \le c_3 \, |\ov{D}_N| \; \Big(\mbox{\f $\dis\frac{M^2}{3^d + 1}$}\Big)^{-\wt{p}/2} \stackrel{(\ref{4.9}),(\ref{2.25})}{\le} \mbox{\f $\dis\frac{\ve}{20}$} \;|D_N| .
\end{equation}
We thus set
\begin{equation}\label{4.50}
\Gamma^{\IB} = \bigcup_{B: {\rm Type}\, \IB, {\rm rarefied}} B \cap \big(\bigcup\limits_{1 \le j \le  J^{\IB}} \Delta'_j\big) \subseteq \ov{D}_N,
\end{equation}
and find that
\begin{equation}\label{4.51}
Bub \cap \big(\bigcup\limits_{{\rm Type}\, \IB} B\big) \subseteq \big(\bigcup_{B: {\rm Type}\, \IB, {\rm substantial}} B\big)  \cup \Gamma^{\IB} \subseteq \ov{D}_N
\end{equation}
with
\begin{equation}\label{4.52}
|\Gamma^{\IB} | \le \mbox{\f $\dis\frac{\ve}{20}$} \;|D_N| .
\end{equation}

\n
We finally turn to the treatment of boxes of Type $\IA$. Our goal is to establish (\ref{4.62}). To this end we have in mind to apply the results of Section 3 concerning resonance sets. This first requires some preparation. We introduce the notation (with $\cU_1$ as in (\ref{1.22}))
\begin{equation}\label{4.53}
s_r(x) = \dis\frac{|B(x,r) \cap \cU_1|}{|B(x,r)|},\; \mbox{for $x \in \IZ^d$ and $r \ge 0$ integer},
\end{equation}
as well as the {\it $J$-resonance set} (recall $\wt{p}$, $p$ from (\ref{4.9}), (\ref{4.11}), and $R_\ell$ from (\ref{2.23})),
\begin{equation}\label{4.54}
\cR es_J = \Big\{x \in \IZ^d; \dsl_{\wt{p} < \ell \le p-4} 1\{s_{8R_\ell}(x) \in [\wt{\alpha}, 1 - \wt{\alpha}]\} \ge J\Big\}, \; \mbox{with $\wt{\alpha} = \mbox{\f $\dis\frac{1}{3}$} \;4^{-d}$ as in (\ref{3.7})}.
\end{equation}

\n
Note that when $B \in \cI_p$ and $x,y \in B$, then $|x-y|_1 \le d\,R_p$ and for each $0 \le \ell \le p-4$, one has
\begin{equation}\label{4.55}
| s_{8R_\ell}(x) - s_{8R_\ell}(y)| \stackrel{(\ref{A.8})}{\le} \mbox{\f $\dis\frac{|x-y|_1}{8R_\ell}$} \le \mbox{\f $\dis\frac{d}{8}$}\; \mbox{
\f $\dis\frac{R_p}{R_{p-4}}$} = \mbox{\f $\dis\frac{d}{8M^4}$} \stackrel{(\ref{4.10})}{<}  \mbox{\f $\dis\frac{\wt{\alpha}}{8}$}.
\end{equation}

\n
Thus, when $B \subseteq \cI_p$ intersects ${\cR} es_J$, we can apply (\ref{4.55}) to $y = x_B$ (the base point of $B$) and $x$ in $B \cap {\cR} es_J$ to find that
\begin{equation}\label{4.56}
\begin{array}{ll}
\cR es_J \subseteq \wh{{\cR} es}_J \stackrel{\rm def}{=} & \!\!\! \mbox{the union of boxes $B \in \cI_p$ such that}
\\
& \dsl_{\wt{p} < \ell \le p-4} \;1\{s_{8R_\ell} (x_B) \in [ \wh{\alpha}, 1-\wh{\alpha}]\} \ge J, \; \mbox{where $\wh{\alpha} = \mbox{\f $\dis\frac{1}{6}$} \;4^{-d}$} .
\end{array}
\end{equation}

\n
As we now explain, the results of Section 3 show that when $N$ is large, the simple random walk starting in a box $B$ of Type $\IA$ enters $\cR es_J$ (and therefore $\wh{\cR es}_J$ as well) with ``high probability''.

\medskip
To this end we recall that $M = 2^{b}$, see (\ref{2.1}), and we set (see (\ref{4.16}))
\begin{equation}\label{4.57}
\begin{array}{l}
\cA = \big\{m = 1 + b\, \big(\ell_N - p + 4 + i \,(J + 1) \,L(J)\big); \; 1 \le i \le I\big\}
\\[1ex]
\mbox{(so that $\{2^m L_1, m \in \cA\} = \{2 R_\ell$; $\ell \in \cS\}$ in the notation of (\ref{4.16})),} 
\end{array}
\end{equation}

\n
and as in (\ref{3.6}) with $L = b \,L(J)$, we set
\begin{equation}\label{4.58}
\begin{array}{l}
\cA_* =   \{m \in \IN; \; \mbox{for some $0 \le j \le J$, $m + j \, L \in \cA\}$}
\\
\quad \;\;=  \big\{m = 1 + b\,\big(\ell_N - p+ 4 + k \,L(J)\big); 1 \le k \le I(J+1)\big\}
\\[1ex]
\mbox{(so that $\{2^m\,L_1$, $m \in \cA^*\} \subseteq \{2 R_\ell$; $p-4 \ge \ell \ge \wt{p} + 1\}$, since}
\\
\mbox{$\wt{p} + 1 \stackrel{(\ref{4.11})}{=} p-4 - I (J + 1) \,L(J)$)}.
\end{array}
\end{equation}

\n
Consider now $B \in \cI_p$, a box of Type $\IA$, so that, see (\ref{4.17}), $|\Delta_{B,\ell} \cap \cU_0| \ge \frac{3}{4}| \, \Delta_{B,\ell}|$ for all $\ell \in \cS$. When $N$ is large, this implies that (see (\ref{4.14}))
\begin{equation}\label{4.59}
|B(x_B, 2 R_\ell) \cap \cU_0| \ge \mbox{\f $\dis\frac{5}{8}$} \;|B(x_B, 2R_\ell)|, \; \mbox{i.e. $s_{2R_\ell}(x_B) \le \mbox{\f $\dis\frac{3}{8}$}$, for each $\ell \in \cS$},
\end{equation}

\n
and by a similar bound as in (\ref{4.55}) it follows that
\begin{equation}\label{4.60}
\mbox{for each $x \in B$ and $\ell \in \cS$, $s_{2R_\ell}(x) \le \mbox{\f $\dis\frac{3}{8}$} + \mbox{\f $\dis\frac{d}{2M^4}$} \stackrel{(\ref{4.10})}{<} \fr$}.
\end{equation}

\n
Thus, for $x \in B$, setting $U_0 = \cU_0 - x$ (and $U_1 = \cU_1 - x)$ in (\ref{3.3}), and noting that $|B(0,2^m \,L_1) \cap U_1 | < \frac{1}{2} \;|B(0,2^m \, L_1)|$ for all $m \in \cA$ by (\ref{4.60}) and the second line of (\ref{4.57}), we find by Proposition \ref{prop3.1} (note that (\ref{3.10}) holds by (\ref{4.12})):
\begin{equation}\label{4.61}
P_x [H_{\cR es_J} = \infty] \stackrel{(\ref{4.58})}{\le} P_0 [H_{R es_{\cA_*,J}} = \infty] \stackrel{(\ref{3.13})}{\le} \gamma_{I,J} \stackrel{(\ref{4.8})}{\le} (1-a) /10
\end{equation}
(see (\ref{3.7}) for the definition of $R es_{\cA_*,J})$.

\medskip
Taking (\ref{4.56}) into account, this shows that for large $N$
\begin{equation}\label{4.62}
\mbox{for any $x \in \bigcup_{{\rm Type}\,\IA} B, \; P_x [H_{\wh{\cR es}_J} = \infty] \le \gamma_{I,J} \le (1-a)/ 10$}.
\end{equation}

\n
In addition, when $B \in \cI_p$ is contained in $\wh{\cR es}_J$, then for some $\ell \in (\wt{p}, p-4], \,B(x_B, 8R_\ell)$ intersects $\cU_0$ and $\cU_1$ and hence $B(0,3 N + L_0)$, see (\ref{1.23}), so that
\begin{equation*}
\mbox{$B \subseteq B(x_B, 8R_\ell) \subseteq B(0, 3N + L_0 + 16 R_{\wt{p} + 1})$ with $16R_{\wt{p} + 1} \stackrel{(\ref{4.9})}{\le} 16M^{-5}  \,N \stackrel{(\ref{2.1})}{<} N / 20$}.
\end{equation*}
As a result, for large $N$,
\begin{equation}\label{4.63}
\wh{\cR es}_J \subseteq \Big[- \big(3 + \mbox{\f $\dis\frac{1}{10}$}\big) \, N, \; \big( 3 +\mbox{\f $\dis\frac{1}{10}$}\big) \,N\Big]^d \cap \IZ^d \stackrel{\rm def}{=} \wh{D}_N.
\end{equation}

\n
We then define the following set, which is a union of boxes of $\cI_p$:
\begin{equation}\label{4.64}
A = \wh{\cR es}_J  \cup \big(\bigcup\limits_{B:{\rm Type}\, \IB,{\rm substantial}} B\big)  \cup \big(\bigcup\limits_{B:{\rm Type}\, \Ib,{\rm substantial}} B\big) \subseteq \wh{D}_N
\end{equation}

\n
(where for the last inclusion we used (\ref{4.63}), (\ref{4.51}), (\ref{4.39}) and $\ov{D}_N \subseteq [-2N,2N]^d \subseteq \wh{D}_N$, see (\ref{2.25})).

\medskip
We can now collect (\ref{4.39}), (\ref{4.40}) for Type $\Ib$, (\ref{4.51}), (\ref{4.52}) for Type $\IB$, and (\ref{4.62}) for Type $\IA$, to find for large $N$ on the event $\Omega_{\ve,N}$ in (\ref{4.6})
\begin{equation}\label{4.65}
\mbox{for all $x \in Bub \, \backslash \, (\Gamma^{\Ib} \cup \Gamma^{\IB})$, $P_x [H_A = \infty] \le (1-a)/10$,  and $|\Gamma^{\Ib} \cup \Gamma^{\IB}| \le \mbox{\f $\dis\frac{\ve}{5}$} \;|D_N|$}.
\end{equation}

\n
The next step on the way to the proof of Theorem \ref{4.1} is (we recall (\ref{4.7}), (\ref{4.9}), (\ref{4.11}) for notation).

\begin{proposition}\label{prop4.2}
For large $N$ on the event $\Omega_{\ve,N}$ in (\ref{4.6}), for each $B \in \cI_p$ included in $A$, see (\ref{4.64}), we can find a subset $\cL_B \subseteq \{1,\dots,p\}$ with $|\cL_B| = J$, so that for each $B$ as above and $\ell \in \cL_B$, there is a collection $\cC_{B,\ell}$ of $(\alpha,\beta,\gamma)$-good $B_0$-boxes with $N_u(D_0) \ge \beta \,{\rm cap} (D_0)$ intersecting $[-3N, 3N]^d$ such that
\begin{equation}\label{4.66}
\begin{array}{l}
\mbox{the $B_0$-boxes in $\cC \stackrel{\rm def}{=} \bigcup_{B \in \cI_p, B \subseteq A, \ell \in \cL_B} \cC_{B,\ell}$ have base points}
\\
\mbox{at mutual $| \cdot |_\infty$-distance at least $\ov{K} L_0$},
\end{array}
\end{equation}
and for each $B \in \cI_p$, $B \subseteq A$, $\ell \in \cL_B$,
\begin{align}
& \mbox{each $B_0 \in \cC_{B,\ell}$ is contained in $B(x_B, 10 R_\ell)$ (see (\ref{2.23}) for notation)}, \label{4.67}
\\[1ex]
& \mbox{$\cC_{B,\ell}$ consists of at most $c_8 (R_\ell / L_0)^{d-2}\, B_0$-boxes}, \label{4.68}
\\[1ex]
&\mbox{for each $x \in B(x_B, 10 R_\ell), \;P_x[H_{\bigcup_{\cC_{B,\ell}} B_0} < T_{B(x_B,10 R_{\ell - 1})}] \ge c_9 \;(> 0)$}. \label{4.69}
\end{align}
\end{proposition}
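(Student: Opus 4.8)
The plan is to treat separately the three kinds of boxes $B\in\cI_p$ making up $A$ in (\ref{4.64}) — substantial boxes of Type~$\Ib$, substantial boxes of Type~$\IB$, and boxes contained in $\wh{\cR es}_J$ — and in each case to isolate a set $\cL_B$ of $J$ scales and, for each $\ell\in\cL_B$, produce a $\ov K L_0$-separated collection of $(\alpha,\beta,\gamma)$-good $B_0$-boxes of $\partial_{B_0}\,\cU_1$ (these automatically have $N_u(D_0)\ge\beta\,{\rm cap}(D_0)$, by the remark below (\ref{1.23})) concentrated in $B(x_B,10R_\ell)$ and carrying capacity comparable to $R_\ell^{d-2}$; then (\ref{4.67})--(\ref{4.69}) will follow by routine capacity/Green-function bookkeeping. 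To get the $J$ scales: for a substantial box of Type~$\Ib$ or $\IB$ the defining sum (\ref{4.36}), resp.~(\ref{4.48}), runs over the $\wt p$ boxes $B^{(\ell)}$, $1\le\ell\le\wt p$, has each summand $\le\wh c$ by (\ref{2.20}) and totals at least $\delta\wt p$, so a Markov/pigeonhole argument yields at least $\tfrac{\delta}{2\wh c}\wt p\ge J$ scales $\ell$ (using (\ref{4.9})~ii)) with ${\rm cap}(B^{(\ell)}\cap F)\ge\tfrac{\delta}{2}R_\ell^{d-2}$, where $F=\bigcup_{j\in\cG^{\Ib}}B'_j$, resp.~$F=\bigcup_{1\le j\le J^{\IB}}\Delta'_j$. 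For $B\subseteq\wh{\cR es}_J$ the inequality in (\ref{4.56}) directly gives at least $J$ scales $\ell\in(\wt p,p-4]$ at which $s_{8R_\ell}(x_B)\in[\wh\alpha,1-\wh\alpha]$, i.e.~both $\cU_0$ and $\cU_1$ occupy a fixed fraction of $B(x_B,8R_\ell)$. I let $\cL_B$ consist of $J$ such scales, taken as spread out as the construction permits.

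For a fixed $B$ and $\ell\in\cL_B$ I then pass from scale $\ell$ to a $\ov K L_0$-separated collection $\wt A^{(\ell)}$ of good $\partial_{B_0}\,\cU_1$-boxes with ${\rm cap}(\wt A^{(\ell)})\gtrsim R_\ell^{d-2}$. In the Type~$\Ib$ case the $B'_j$ are pairwise disjoint and each lies inside a single Type~$\Ib$ box, so $B^{(\ell)}\cap F$ is a disjoint union of whole $B'_j$'s; inside each, (\ref{4.29}) together with Lemma~\ref{lem1.1} (applicable since $|B'_j|^{1/d}\ge L_1$) extracts a $\ov K L_0$-separated subcollection $\wt A_j$ of good boxes with base points at distance $\ge c_5 L_1$ from $\partial B'_j$ and ${\rm cap}(\wt A_j)\ge c\,{\rm cap}(B'_j)$. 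The $c_5 L_1$-margin (which dominates $\ov K L_0$ as $L_1/L_0\to\infty$) keeps $\wt A^{(\ell)}:=\bigcup_j\wt A_j$ globally $\ov K L_0$-separated, and the capacity is recovered through the comparison estimate: if $K_j\subseteq Q_j$ with $Q_j$ pairwise disjoint and ${\rm cap}(K_j)\ge\theta\,{\rm cap}(Q_j)$, then ${\rm cap}(\bigcup_j K_j)\ge c(\theta)\,{\rm cap}(\bigcup_j Q_j)$ — proved by splitting $e_{\bigcup Q_j}$ over the $Q_j$, sweeping each piece onto $K_j$, and invoking the variational (Dirichlet) characterization of capacity — whence ${\rm cap}(\wt A^{(\ell)})\ge c\,{\rm cap}(B^{(\ell)}\cap F)\ge c\,R_\ell^{d-2}$. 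Type~$\IB$ is the same after a Vitali selection among the $\Delta'_j$ meeting $B^{(\ell)}$ (all of size $<R_\ell$, hence inside $B(x_B,3R_\ell)$) producing a disjoint subfamily that still carries capacity $\gtrsim R_\ell^{d-2}$, using (\ref{4.46}) in place of (\ref{4.29}). In the $\wh{\cR es}_J$ case no gluing is needed: the mixed-volume bound $s_{8R_\ell}(x_B)\in[\wh\alpha,1-\wh\alpha]$ lets me apply the isoperimetric estimates of \cite{DeusPisz96} directly in the box $B(x_B,8R_\ell)$ (using, as below (\ref{4.44}), that on $\Omega_{\ve,N}$ only a vanishing fraction of $B_0$-columns contains an $(\alpha,\beta,\gamma)$-bad box, since the number of columns is $\gg\rho(L_0)N^{d-2}$) and then Lemma~\ref{lem1.1}. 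In all cases $\wt A^{(\ell)}\subseteq B(x_B,10R_\ell)$ by (\ref{4.15}).

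Applying Lemma~\ref{lem1.2} to the $\ov K L_0$-separated set $\wt A^{(\ell)}$ yields $\cC_{B,\ell}$ with ${\rm cap}(\bigcup_{\cC_{B,\ell}}B_0)\ge c\,R_\ell^{d-2}$ and $|\cC_{B,\ell}|\le c'\,{\rm cap}(\wt A^{(\ell)})/{\rm cap}(B_0)\le c_8(R_\ell/L_0)^{d-2}$ — this is (\ref{4.68}), and combined with ${\rm cap}(\bigcup_{\cC_{B,\ell}}B_0)\le|\cC_{B,\ell}|\,{\rm cap}(B_0)$ it pins the capacity between two multiples of $R_\ell^{d-2}$; moreover $\cC_{B,\ell}$ inherits $\ov K L_0$-separation, consists of good boxes of $\partial_{B_0}\,\cU_1$ meeting $[-3N,3N]^d$ (as $\partial_{B_0}\,\cU_1$ lies against $\cU_0\subseteq[-3N-L_0,3N+L_0]^d$), and lies in $B(x_B,10R_\ell)$, giving (\ref{4.67}). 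For (\ref{4.69}), with $K=\bigcup_{\cC_{B,\ell}}B_0$ and $R_{\ell-1}=MR_\ell$, one has for $x\in B(x_B,10R_\ell)$ that $P_x[H_K<\infty]=\langle g(x,\cdot),e_K\rangle\ge c\,R_\ell^{2-d}\,{\rm cap}(K)\ge c_9'>0$ (Green-function lower bound, since $|x-z|\le 20R_\ell$ on $K$), while $P_x[T_{B(x_B,10R_{\ell-1})}<H_K<\infty]\le\sup_{|w-x_B|_\infty\ge10R_{\ell-1}}P_w[H_K<\infty]\le c_*(10(M-1)R_\ell)^{2-d}{\rm cap}(K)\le c_9'/2$, because $M$ is a dimensional constant we may take as large as we wish (only $M^2>3^d+1$ is imposed). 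To arrange the global separation (\ref{4.66}) for $\cC$: within each $\cC_{B,\ell}$ it already holds, between distinct $B$'s it holds by the $c_5 L_1$/$c_7 L_1$ margins (or their $\wh{\cR es}_J$ analogue), and the only remaining overlaps — between two scales of a common $B$ — are removed by a final greedy pruning which, since each $\cC_{B,\ell}$ is extremely dilute (at most $c_8(R_\ell/L_0)^{d-2}$ boxes over a region of size $R_\ell$) and the scales of $\cL_B$ are well separated, costs a negligible fraction and preserves (\ref{4.68})--(\ref{4.69}).

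I expect the main obstacle to be precisely the gluing step of the second paragraph: being "substantial'' only yields a capacity lower bound for the \emph{union} $\bigcup_j B'_j$ (resp.~$\bigcup_j\Delta'_j$), and since capacity is merely subadditive one cannot simply add the surface capacities found in the individual pieces; the balayage/Dirichlet-principle comparison, together with the Vitali selection and the geometric margins that keep the pieces separated, is what makes the gluing legitimate. A secondary, more clerical difficulty is controlling the $\ov K L_0$-separation (\ref{4.66}) simultaneously across scales and across $\cI_p$-boxes.
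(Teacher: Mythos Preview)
Your outline for choosing $\cL_B$ and for extracting, at each scale $\ell\in\cL_B$, a $\ov K L_0$-separated family of good $B_0$-boxes of $\partial_{B_0}\,\cU_1$ with capacity $\gtrsim R_\ell^{d-2}$ is essentially what the paper does (the pigeonhole to get $\tfrac{\delta}{2\wh c}\,\wt p\ge J$ good scales, the gluing argument via the hitting-probability comparison, and the use of Lemmas \ref{lem1.1} and \ref{lem1.2} all appear there). The part that does \emph{not} go through is your handling of the global separation (\ref{4.66}), which you call a ``secondary, more clerical difficulty''. It is in fact the main remaining obstacle.

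Your claim that ``between distinct $B$'s it holds by the $c_5 L_1$/$c_7 L_1$ margins (or their $\wh{\cR es}_J$ analogue)'' is wrong. For $B\subseteq\wh{A}$ there is no margin structure at all: the collection $\wt\cC_{B,\ell}$ sits in $B(x_B,8R_\ell)$, and for adjacent $B,B'\in\cI_p$ at the same scale these balls overlap almost completely, so boxes from $\wt\cC_{B,\ell}$ and $\wt\cC_{B',\ell}$ may lie at distance $\ll\ov K L_0$. For Type $\IB$ the $\Delta'_j$ (unlike the $B'_j$) are not pairwise disjoint, so the $c_7 L_1$-margins from $\partial\Delta'_j$ do not separate collections arising from different $(B,\ell)$. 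And your greedy pruning cannot be controlled: a given $\cC_{B,\ell}$ may have to avoid all boxes already placed in $\bigcup_{(B',\ell')}\cC_{B',\ell'}$ with $R_{\ell'}\gg R_\ell$, and the total number of such boxes can exceed $|\cC_{B,\ell}|$, so you cannot guarantee that the capacity $\gtrsim R_\ell^{d-2}$ survives.

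The paper's remedy is to ask for much stronger separation \emph{within} each $\wt\cC_{B,\ell}$: one applies Lemma \ref{lem1.1} with spacing $H\ov K L_0$ instead of $\ov K L_0$, where $H=H(a,\ve,J)$ is chosen larger than ten times the total number of pairs $(B,\ell)$ with $B\in\cI_p$, $B\subseteq A$, $\ell\in\cL_B$ (this number is at most $J\cdot 10^d M^{(p+1)d}$, a constant depending only on $d,a,\ve$). Then in the full union $\wt\cC=\bigcup_{B,\ell}\wt\cC_{B,\ell}$, any box has at most $H/10$ companions within distance $H\ov K L_0/2$; hence the equivalence classes for ``linked by $\ov K L_0$-steps in $\wt\cC$'' have diameter at most $H\ov K L_0/10$. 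Picking one representative $\wh B_0$ per class yields a globally $\ov K L_0$-separated family $\wh\cC$, and the map $B_0\mapsto\wh B_0$ contracts distances by at most a factor $1/2$, so a short lemma (Lemma \ref{lem4.3} in the paper) transfers the capacity lower bound from $\bigcup_{\wt\cC_{B,\ell}}B_0$ to $\bigcup_{\wt\cC_{B,\ell}}\wh B_0$. Only then does one apply Lemma \ref{lem1.2}.

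A minor point: your argument for (\ref{4.69}) via ``take $M$ as large as we wish'' is fragile, since $M$ is fixed once in (\ref{2.1}) and enters all of Section~2. The cleaner route, used in the paper, is the killed Green function comparison (\ref{1.4}): for $x,y\in B(x_B,10R_\ell)$ one has $g_{B(x_B,10R_{\ell-1})}(x,y)\ge c_\Delta\, g(x,y)$ (since $R_{\ell-1}=MR_\ell\ge 2R_\ell$), which gives $P_x[H_K<T_{B(x_B,10R_{\ell-1})}]\ge c_\Delta\, P_x[H_K<\infty]$ directly, with no constraint on $M$.
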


Once we show Proposition \ref{prop4.2} it will be a quick step to complete the proof of Theorem \ref{theo4.1}. It may be helpful at this stage to provide a brief outline of the proof of Proposition~\ref{prop4.2}.

\medskip
In a first step we will define the set $\cL_B$ for each $B \in \cI_p$ contained in $A$, leveraging the fact that such a $B$ is substantial when it is of Type $\Ib$ or $\IB$, and that otherwise it is included in $\wh{Res}_J$. The selected levels will be such that for each $\ell \in \cL_B$ there is a collection of disjoint sub-boxes within $B(x_B, 8R_\ell)$  such that their union has a non-degenerate capacity in $B(x_B,8R_\ell)$, and within each such sub-box there is a ``surface-like'' presence of $(\alpha,\beta,\gamma)$-good $B_0$-boxes from $\partial_{B_0} \,\cU_1$ (hence such that $N_u (D_0) \ge \beta\, {\rm cap}(D_0)$, see below (\ref{1.23})), having disjoint projection in some coordinate direction.

\medskip
In a second step, with the help of Lemma \ref{lem1.1}, we will extract for each $B$ and $\ell \in \cL_B$ as above, a collection $\wt{\cC}_{B,\ell}$ of $(\alpha,\beta,\gamma)$-good $B_0$-boxes with $N_u(D_0) \ge \beta\, {\rm cap} (D_0)$, contained in $B(x_B, 8R_\ell)$, with base points at mutual distance at least $H \,\ov{K}\,L_0$ (with $H$ ``large'' and solely depending on $d,\alpha, \ve, J$, see (\ref{4.78})), so that the union of the $B_0$-boxes from $\wt{\cC}_{B,\ell}$ has a non-degenerate capacity in $B(x_B, 8R_\ell)$, see (\ref{4.85}), (\ref{4.86}). We will then consider the union $\wt{\cC}$ of these collections $\wt{\cC}_{B,\ell}$ of $B_0$-boxes. At this stage the mutual distance between $B_0$-boxes in $\wt{\cC}$ might be smaller than $\ov{K} L_0$ (and $\wt{\cC}$ need not satisfy (\ref{4.66})).

\medskip
In the third and last step we will introduce an equivalence relation within $\wt{\cC}$, for which two $B_0$-boxes of $\wt{\cC}$ lie in the same equivalence class if they can be joined by a path of boxes in $\wt{\cC}$ with steps of $| \cdot |_\infty$-size at most $\ov{K}\,L_0$. We will show that the equivalence classes have a ``small size'', see (\ref{4.90}). Then, we will select a representative in each equivalence class, and for each $B \in \cI_p$, $B \subseteq A$, and $\ell \in \cL_B$ consider the collection of the representatives of the boxes in $\wt{\cC}_{B,\ell}$. With the help of Lemma \ref{lem1.2} (which in essence goes back to \cite{AsseScha20}), we will extract the desired collections $\cC_{B,\ell}$, so that (\ref{4.66}) - (\ref{4.69}) hold.

\bigskip\n
{\it Proof of Proposition \ref{prop4.2}:} Our first step is to define $\cL_B$, for $B \in \cI_p$ contained in $A$, see (\ref{4.64}). We start with the case where $B$ is of Type $\Ib$, substantial, then we proceed with the case where $B$ is of Type $\IB$, substantial, and finally we handle the case where $B$ is contained in
\begin{equation}\label{4.70}
\wh{A} = \wh{\cR es}_J \,  \backslash \, \Big\{\big(\bigcup\limits_{B:{\rm Type}\, \Ib,{\rm substantial}} B\big)  \cup \big(\bigcup\limits_{B:{\rm Type}\, \IB,{\rm substantial}} B\big) \Big\}.
\end{equation}
Thus, we first consider $B$ of Type $\Ib$, substantial. By (\ref{4.36}) we know that $\mu \stackrel{\rm def}{=} |\{ 1 \le \ell \le \wt{p}$; ${\rm cap} (B^{(\ell)} \cap (\bigcup_{j \in \cG^{\Ib}} B'_j)) \ge \frac{\delta}{2} \;|B^{(\ell)}|^{\frac{d-2}{d}}\}|$ satisfies the inequality $\wh{c} \,\mu + \frac{\delta}{2} \,\wt{p} \ge \wh{c} \,\mu + \frac{\delta}{2} \,(\wt{p} - \mu) \ge \delta \,\wt{p}$ (with $\wh{c}$ from (\ref{1.8})), so that $\mu \,\wh{c} \ge \frac{\delta}{2} \,\wt{p}$ and hence
\begin{equation}\label{4.71}
\Big|\Big\{1 \le \ell \le \wt{p}; \, {\rm cap}\,\Big(B^{(\ell)} \cap \big(\bigcup_{j \in \cG^{\Ib}} B'_j\big)\Big) \ge \mbox{\f $\dis\frac{\delta}{2}$} \; |B^{(\ell)} |^{\frac{d-2}{d}}\Big\}\Big| \ge  \mbox{\f $\dis\frac{\delta}{2 \wh{c}}$} \;\wt{p} \stackrel{(\ref{4.9}) \,{\rm ii)}}{\ge} J
\end{equation}
(let us incidentally point out that $\frac{\delta}{2\wh{c}} < 1$, see Remark \ref{rem2.3}).

\medskip
So for $B$ of Type $\Ib$, substantial, we define
\begin{equation}\label{4.72}
\begin{array}{l}
\mbox{$\cL_B$ as the collection of the $J$ largest integers $\ell$ in $[1,\wt{p}]$ such that}\\
{\rm cap} (B^{(\ell)} \cap \big(\bigcup_{j \in \cG^{\Ib}} B'_j)\big) \ge \frac{\delta}{2} \;|B^{(\ell)}|^{\frac{d-2}{d}}.
\end{array}
\end{equation}

\n
We then turn to the case of a box $B$ of Type $\IB$, substantial. Using (\ref{4.48}) in place of (\ref{4.36}), a similar argument as above shows that 
\begin{equation}\label{4.73}
\Big| \Big\{1 \le \ell \le \wt{p}; \; {\rm cap}\,\Big(B^{(\ell)} \cap \big(\bigcup_{1 \le j \le J^{\IB}} \Delta'_j\big)\Big)\ge \mbox{\f $\dis\frac{\delta}{2}$} \;|B^{(\ell)}|^{\frac{d-2}{d}}\Big\}\Big| \ge \mbox{\f $\dis\frac{\delta}{2 \wh{c}}$}\;\wt{p}  \ge J,
\end{equation}

\n
and for $B$ of Type $\IB$, substantial, we define
\begin{equation}\label{4.74}
\begin{array}{l}
\mbox{$\cL_B$ as the collection of the $J$ largest integers $\ell$ in $[1,\wt{p}]$ such that}\\
{\rm cap} (B^{(\ell)} \cap \big(\bigcup_{1 \le j \le J^{\IB}} \Delta'_j)\big) \ge \frac{\delta}{2} \;|B^{(\ell)}|^{\frac{d-2}{d}}.
\end{array}
\end{equation}

\n
In addition, as we now explain, for $B$ and $\ell \in \cL_B$ as in (\ref{4.74}), we can extract a sub-collection $\cJ_{B,\ell} \subseteq \{1, \dots, J^{\IB}\}$ such that
\begin{equation}\label{4.75}
\left\{ \begin{array}{l}
\mbox{for each $j \in\cJ_{B,\ell}$, $\Delta'_j \cap B^{(\ell)} \not= \phi$, the $\Delta'_j$, $j \in \cJ_{B,\ell}$ are pairwise disjoint,}
\\
\mbox{and any $\Delta'_k$, $k \in \{1, \dots, J^{\IB}\}$ intersecting $B^{(\ell)}$ is contained in some}
\\
\mbox{$\wt{\Delta}_j$, $j \in \cJ_{B,\ell}$, where $\wt{\Delta}'_j$ denotes the closed ball in supremum-distance}
\\
\mbox{with triple radius and same center as $\Delta'_j$}
\end{array}\right.
\end{equation}

\n
(we refer to the unique $x\in R_p \, \IZ^d$ and $r \ge 1$ such that $\Delta'_j = x + [-r,r)^d$ as  the ``center'' and the ``radius'' of $\Delta'_j$, and recall that the size of $\Delta'_j$, i.e.~$2r$, is at most $4 R_{\wt{p} + 1}$, see (\ref{4.44})).

\medskip
To prove (\ref{4.75}) we use a routine cover argument. We list the $\Delta'_j$, $1\le j \le J^{\IB}$ intersecting $B^{(\ell)}$ by decreasing size. We first consider the first such $\Delta'_j$\,, thus of largest size, and delete from the list all the other $\Delta'_k$ that intersect $\Delta'_j$. They are all contained in $\wt{\Delta}'_j$. If the remaining list is empty, we are done. Otherwise, we proceed with the next $\Delta'_k$ in the list, which by construction does not intersect the first chosen $\Delta'_j$, and proceed similarly until coming to an empty list.

\medskip
We thus find that for $B$ of Type $\IB$, substantial, and $\ell \in \cL_B$,
\begin{equation}\label{4.76}
B^{(\ell)} \cap \big( \bigcup_{1\le j \le J^{\IB}} \Delta'_j\big) \subseteq B^{(\ell)} \cap \big(\bigcup_{j \in \cJ_{B,\ell}} \wt{\Delta}'_j\big),
\end{equation}

\n
where the $\Delta'_j, j \in \cJ_{B,\ell}$ are pairwise disjoint, intersect $B^{(\ell)}$ and have size at most $4R_{\wt{p} + 1}$ (and by (\ref{4.51}) are thus contained in $B(0,2N + 4 R_{\wt{p} + 1}) \subseteq B(0,2N + R_{\wt{p}}) \subseteq [-3N, 3N]^d \subseteq \wh{D}_N$, (see (\ref{4.63})).

\medskip
Finally, we turn to the case of $B \in \cI_p$ contained in $\wh{A}$ (see (\ref{4.70})). Then, $B \subseteq \wh{\cR es}_J$, see (\ref{4.56}), and we define
\begin{equation}\label{4.77}
\begin{array}{l}
\mbox{$\cL_B$ as the collection of the $J$ largest integers in $[\wt{p} + 1, p-4]$ such that}\\
s_{8R_\ell} (x_B) \; \Big(= \dis\frac{|B(x_B,8R_\ell) \cap \cU_1|}{|B(x_B,8R_\ell)|} \Big) \in [\wh{\alpha}, 1 - \wh{\alpha}] \quad \mbox{(recall  $\wh{\alpha} = \frac{1}{6} \; 4^{-d}$)} .
\end{array}
\end{equation}

\n
We have now defined $\cL_B$ for any $B \in \cI_p$ contained in $A$ through (\ref{4.72}), (\ref{4.74}), (\ref{4.77}) and thus completed the first step of the proof of Proposition \ref{prop4.2}.

\medskip
In the second step we are going to introduce for each $B$ and $\ell \in \cL_B$ as above a collection $\wt{\cC}_{B,\ell}$ of $B_0$-boxes, which are $(\alpha,\beta,\gamma)$-good with $N_u(D_0) \ge \beta \, {\rm cap}(D_0)$, contained in $B(x_B,8R_\ell)$, with base points at mutual $| \,\cdot \,|_\infty$-distance at least $H \, \ov{K}\,L_0$, where $H$ is defined in (\ref{4.78}) below, and such that $\bigcup_{\wt{\cC}_{B,\ell}} \,B_0$ has a non-degenerate capacity in $B(x_B, 8R_\ell)$, see (\ref{4.85}), (\ref{4.86}).

\medskip
With this in mind, we introduce (we recall that $p$ in (\ref{4.11}) depends on $d,a,\ve,J)$:
\begin{equation}\label{4.78}
H(a,\ve,J) = 10^{d+1}  \;J M^{(p+1)d}.
\end{equation}

\n
We begin with the case of $B$ of Type $\Ib$, substantial. We recall that by (\ref{4.29}), for any $j \in \cG^{\Ib}$, there is a coordinate projection $\pi_j '\,^{\!\!,\,\Ib}$ and at least $\frac{1}{2} \; c_4 (|B'_j | \, / \, |B_0|)^{\frac{d-1}{d}} B_0$-boxes in distinct $\pi_j '\,^{\!\!,\,\Ib}$ -columns, $(\alpha,\beta,\gamma$)-good and such that $N_u(D_0) \ge \beta\,{\rm cap}(D_0)$, contained in $B'_j$, with base points at distance at least $c_5 \,L_1$ from $\partial  B'_j$. We can now apply Lemma \ref{lem1.1}, so that for large $N$ on the event $\Omega_{\ve,N}$ in (\ref{4.6}):
\begin{equation}\label{4.79}
\left\{\begin{array}{l}
\mbox{for each $j \in \cG^{\Ib}$ one can find a collection $\cC^{\,',\Ib}_j$ of $(\alpha,\beta,\gamma)$-good $B_0$-boxes}
\\
\mbox{contained in $B'_j \;(\subseteq \,\ov{D}_N$, see (\ref{4.25})), such that $N_u(D_0) \ge \beta\, {\rm cap}(D_0)$,}
\\
\mbox{at distance at least $c_5\,L_1$ from $\partial B'_j$, with base points having}
\\
\mbox{$\pi_j '\,^{\!\!,\,\Ib}$-projection at mutual $|\cdot |_\infty$-distance at least $H \,\ov{K} L_0$, and so that}
\\
\mbox{${\rm cap} \,\big(\bigcup_{\cC^{',\Ib}_{j}} B_0\big) \ge c_{10} \,|B'_j|^{\frac{d-2}{d}}$}.
\end{array}\right.
\end{equation}

\n
We then define for each $B$ of Type $\Ib$, substantial, and $\ell \in \cL_B$ the collection of $B_0$-boxes:
\begin{equation}\label{4.80}
\wt{\cC}_{B,\ell} = \bigcup\limits_{j \in \cG^{\Ib}, B'_j \,{\rm intersects}\, B^{(\ell)}}  \cC^{\,',\Ib}_j
\end{equation}

\n
(we recall that when $B'_j$ intersects $B^{(\ell)}$,  then $B'_j \subseteq B^{(\ell)}$, and the $B'_j, j \in \cG^{\Ib}$ are pairwise disjoint, see (\ref{4.21}), (\ref{4.25})).

\medskip
We then turn to the definition of $\wt{\cC}_{B,\ell}$ for $B$ of Type $\IB$, substantial, and $\ell \in \cL_B$. We combine (\ref{4.46}) for $j\in \cJ_{B,\ell}$ and Lemma \ref{lem1.1} to find that for large $N$ on the event $\Omega_{\ve,N}$ in (\ref{4.6}):
\begin{equation}\label{4.81}
\left\{\begin{array}{l}
\mbox{for any $B$ of Type $\IB$, substantial, $\ell \in \cL_B$, and $j \in \cJ_{B,\ell}$, one can find a}
\\
\mbox{collection $\cC^{\,',\IB}_{j,B,\ell}$ of $(\alpha,\beta,\gamma)$-good $B_0$-boxes contained in $\Delta'_j \; (\subseteq [-3N,3N]^d$,}
\\
\mbox{see below (\ref{4.76})), with $N_u(D_0) \ge \beta\,{\rm cap}(D_0)$, at distance at least $c_7\,L_1$ from}
\\
\mbox{$\partial \Delta'_j$, with base points having $\pi_j '\,^{\!\!,\,\IB}$-projections at mutual $|\cdot |_\infty$-distance}
\\
\mbox{at least $H \,\ov{K}L_0$, such that ${\rm cap} \,\big(\bigcup_{\cC^{',\IB}_{j,B,\ell}} B_0\big) \ge c_{11} \,|\Delta'_j|^{\frac{d-2}{d}}$}.
\end{array}\right.
\end{equation}

\n
We then define for $B$ of Type $\IB$, substantial, and $\cC \in \cL_B$ the collection of $B_0$-boxes
\begin{equation}\label{4.82}
\wt{\cC}_{B,\ell} = \bigcup\limits_{j \in \cJ_{B,\ell}} \cC^{\,',\IB}_{j,B,\ell}
\end{equation}

\n
(and, see below (\ref{4.76}), all $\Delta'_j$, $j \in \cJ_{B,\ell}$ are contained in $B(x_B, R_\ell + 4 R_{\wt{p} + 1}) \stackrel{\ell \le \wt{p}}{\subseteq} B(x_B,2 R_\ell)$ and all $B_0$-boxes in $\wt{\cC}_{B,\ell}$ are contained in $[-3N,3N]^d$).

\medskip
Finally, we turn to the definition of $\wt{\cC}_{B,\ell}$ for $B \in \cI_p$, $B \subseteq \wh{A}$, see (\ref{4.70}), and $\ell \in \cL_B$, see (\ref{4.77}). Recall that $\cL_B \subseteq [\wt{p} + 1, p-4]$ and $s_{8R_\ell} (x_B) \in [\wh{\alpha}, 1 - \wh{\alpha}]$ for each $\ell \in \cL_B$. Using isoperimetry (see (A.3) - (A.6), p.~480-481 of \cite{DeusPisz96}), we see that for large $N$ on the event $\Omega_{\ve, N}$ in (\ref{4.6}):
\begin{equation}\label{4.83}
\begin{array}{l}
\mbox{for each $B \in \cI_p$ contained in $\wh{A}$ and $\ell \in \cL_B$, there is a coordinate projection}
\\
\mbox{$\pi_{B,\ell}$ and a collection of at least $c_{12}\,(|B (x_{B}, 8R_\ell)| \, / \, |B_0|)^{\frac{d-1}{d}}$ $B_0$-boxes}
\\
\mbox{from $\partial_{B_0} \,\cU_1$ (see below (\ref{1.23})), in $B(x_B, 8R_\ell)$, with distinct $\pi_{B,\ell}$-projections}.
\end{array}
\end{equation}

\medskip\n
Note that any $B_0 \in \partial_{B_0} \, \cU_1$ intersects $[-3N,3N]^d$, see (\ref{1.22}), and on $\cB^c_N \;(\supset \Omega_{\ve,N})$ there are at most $\rho(L_0) \,N^{d-2}$ $(\alpha,\beta,\gamma)$-bad $B_0$-boxes intersecting $[-3N, 3N]^d$, see (\ref{1.24}). As $N \r \infty$, $\rho (L_0) \, N^{d-2} \ll (R_p / L_0)^{d-1} \le (R_\ell/L_0)^{d-1}$, and we see that for large $N$ on the event $\Omega_{\ve,N}$, for each $B \in \cI_p$, $B \subseteq \wh{A}$, and $\ell \in \cL_B$, we can find a collection of at least $\frac{1}{2} \;c_{12} (| B(x_B, 8R_\ell)| \, / \, |B_0|)^{\frac{d-1}{d}}$ $(\alpha,\beta,\gamma)$-good $B_0$-boxes in $B(x_B,8R_\ell)$ with $N_u(D_0) \ge \beta \, {\rm cap}(D_0)$, having distinct $\pi_{B,\ell}$-projections, and intersecting $[-3N,3N]^d$.

\medskip
Once again we apply Lemma \ref{lem1.1} and find that for large $N$ on the event $\Omega_{\ve,N}$ in (\ref{4.6}):
\begin{equation}\label{4.84}
\left\{\begin{array}{l}
\mbox{for any $B \in \cI_p$ contained in $\wh{A}$, and $\ell \in \cL_B$, there is a collection $\wt{\cC}_{B,\ell}$}
 \\
\mbox{of $(\alpha,\beta,\gamma)$-good $B_0$-boxes with $N_u(D_0) \ge \beta \, {\rm cap}(D_0)$, contained in}
\\
\mbox{$B(x_B,8R_\ell)$, intersecting $[-3N,3N]^d$, with base points having}
\\
\mbox{$\pi_{B,\ell}$-projections at mutual $|\cdot |_\infty$-distance at least $H \, \ov{K}L_0$ and such}
\\
\mbox{that ${\rm cap} \,\big(\bigcup_{\wt{\cC}_{B,\ell}} B_0) \ge c_{13}\, |B(x_B,8R_\ell)|^{\frac{d-2}{d}}$}.
\end{array}\right.
\end{equation}

\n
Collecting (\ref{4.80}), (\ref{4.82}), (\ref{4.84}), for large $N$ on the event $\Omega_{\ve, N}$ in (\ref{4.6}), we have defined $\wt{\cC}_{B,\ell}$ for each $B \in \cI_p$ contained in $A$ and $\ell \in \cL_B$. In particular for such $B$ and $\ell$
\begin{equation}\label{4.85}
\begin{array}{l}
\mbox{the boxes $B_0$ in $\wt{\cC}_{B,\ell}$ are $(\alpha,\beta,\gamma)$-good with $N_u(D_0) \ge \beta \, {\rm cap}(D_0)$, are}
\\
\mbox{contained in $B(x_B,8R_\ell)$, intersect $[-3N,3N]^d$, and have base points}
\\
\mbox{at mutual $| \cdot |_\infty$-distance at least $H\,\ov{K}L_0$}.
\end{array}
\end{equation}

\n
In addition, as we now explain
\begin{equation}\label{4.86}
\mbox{for any $B, \ell$ as above, ${\rm cap} \,\big(\dis\bigcup\limits_{\wt{\cC}_{B,\ell}} B_0\big) \ge c\,R_\ell^{d-2}$}.
\end{equation}

\n
Indeed, when $B$ is of Type $\Ib$, substantial, and $\ell \in \cL_B$, we know by (\ref{4.72}) that ${\rm cap}(B^{(\ell)} \cap (\bigcup_{j \in \cG^{\Ib}} B'_j)) \ge \frac{\delta}{2} \;| B^{(\ell)}|^{\frac{d-2}{d}}$ and by (\ref{4.79}) that for each $j \in \cG^{\Ib}$, ${\rm cap}\,(\bigcup_{\cC^{\,',\Ib}_j}B_0) \ge c_{10} \, |B'_j|^{\frac{d-2}{d}}$. Since $\wt{\cC}_{B,\ell}$ is the union of the $\cC_j^{\,',\Ib}$ for the $j \in \cG^{\Ib}$ such that $B^{(\ell)} \cap B'_j \not= \phi$ (and hence such that $B'_j \subseteq B^{(\ell)}$), see (\ref{4.80}), we find by the strong Markov property and the repeated application of the second line of (\ref{1.5}) that the simple random walk starting in $B^{(\ell)}$ enters $\bigcup_{\wt{\cC}_{B,\ell}} B_0$ ($\subseteq B^{(\ell)}$) with a probability bounded below by a constant. In this lower bound, integrating the starting point of the walk with the equilibrium measure $e_{B^{(\ell)}}$, and using (\ref{1.5}) as well as $h_{B^{(\ell)}} = 1$ on $B^{(\ell)} \supseteq \bigcup_{\wt{\cC}_{B,\ell}} B_0$, we find that ${\rm cap} \, (\bigcup_{\wt{\cC}_{B,\ell}} B_0) \ge c\,|B^{(\ell)} |^{\frac{d-2}{d}} = c\,R_\ell^{d-2}$.

\bigskip\n
When $B$ is of Type $\IB$, substantial, and $\ell \in \cL_B$, we know by (\ref{4.73}), (\ref{4.75})  that ${\rm cap} \,(B^{(\ell)} \cap (\bigcup_{j \in \cJ_{B,\ell}} \wt{\Delta} '_j)) \ge {\rm cap} \,(B^{(\ell)} \cap (\bigcup_{1 \le j \le J^{\IB}} \Delta '_j)) \ge \frac{\delta}{2} \;|B^{(\ell)}|^{\frac{d-2}{d}}$. In addition, for any $j \in \cJ_{B,\ell}$, ${\rm cap}\,(\bigcup_{ \cC^{\,',\IB}_{j,B,\ell}} B_0) \ge c_{11} \,|\Delta'_j|^{\frac{d-2}{d}}$ by (\ref{4.81}). Thus, by a similar argument as in the previous paragraph, the simple random walk starting in $B^{(\ell)}$ enters $\bigcup_{j \in \cJ_{B,\ell}} \wt{\Delta}'_j$, and hence $\bigcup_{j \in \cJ_{B,\ell}} \Delta'_j$, and therefore the $\bigcup_{j \in \cJ_{B,\ell}} \bigcup_{ \cC^{\,',\IB}_{j,B,\ell}} B_0 = \bigcup_{\wt{\cC}_{B,\ell}} B_0$ with a probability bounded below by a constant. A similar lower bound naturally holds true when the walk starts in $B(x_B, 8 R_\ell) \supseteq B^{(\ell)}$. This bigger set contains $\bigcup_{\wt{\cC}_{B,\ell}} B_0$, see (\ref{4.85}), and as in the previous paragraph it follows that ${\rm cap} \, (\bigcup_{\wt{\cC}_{B,\ell}} B_0) \ge c\,|B^{(\ell)}|^{\frac{d-2}{d}} = c\,R_\ell^{d-2}$.

\bigskip\n
Finally, when $B \in \cI_p$ is contained in $\wh{A}$ (see (\ref{4.70})) and $\ell \in \cL_B$, we know by (\ref{4.84}) that ${\rm cap} \,(\bigcup_{\wt{\cC}_{B,\ell}} B_0) \ge c_{13} \,|B(x_B,8R_\ell)|^{\frac{d-2}{d}} \ge c\, R^{d-2}_\ell$. This completes the proof of (\ref{4.86}).

\bigskip
We are now ready to start the third (and last) step of the proof of Proposition \ref{prop4.2}. In view of (\ref{4.85}), (\ref{4.86}) we introduce the collection of $B_0$-boxes
\begin{equation}\label{4.87}
\wt{\cC} = \bigcup\limits_{B, \ell \in \cL_B} \wt{\cC}_{B,\ell} \; \mbox{(the union runs over all $B \in \cI_p$, $B \subseteq A$, and $\ell \in \cL_B$)}.
\end{equation}

\n
The collection $\wt{\cC}$ may contain $B_0$-boxes with base points at mutual $| \cdot |_\infty$-distance smaller than $\ov{K} L_0$, and (\ref{4.66}) need not hold for $\wt{\cC}$. We will eventually extract sub-collections $\cC_{B,\ell}$ from $\wt{\cC}$ for each $B \subseteq \cI_p$ contained in $A$ and $\ell \in \cL_B$, so as to fulfill the requirements of Proposition \ref{prop4.2}, see (\ref{4.97}), (\ref{4.98}).

\medskip
With this in mind, we first observe that for each $B_0 \in \wt{\cC}$ and each $\wt{\cC}_{B,\ell}$ there is at most one box of $\wt{\cC}_{B,\ell}$ with base point at $| \cdot |_\infty$-distance  smaller than $H \ov{K} L_0/2$ from the base point of $B_0$. As a result, we find that for large $N$ on $\Omega_{\ve,N}$: 
\begin{equation}\label{4.88}
\begin{array}{l}
\mbox{for each $B_0 \in \wt{\cC}$ the number of boxes in $\wt{\cC}$ having base points at   ~~~~ ~ }
\\
\mbox{$| \cdot |_\infty$-distance smaller than $H \ov{K} L_0 / 2$ from $x_{B_0}$ is at most}
\\
|\{ (B,\ell); \, B \in \cI_p,   \, B \subseteq A, \, \ell \in \cL_B\}| \stackrel{(\ref{4.64})}{\le} J\,10^d \, M^{(p+1) d}  \stackrel{(\ref{4.78})}{=} H/10.
\end{array}
\end{equation}
Then, for each $B_0 \in \wt{\cC}$, we denote by
\begin{equation}\label{4.89}
\begin{array}{l}
\mbox{$\wt{\cC}_{B_0}$ ($\subseteq \wt{\cC}$) the collection of $B_0$-boxes in $\wt{\cC}$ having base points that can}
\\
\mbox{be joined to $x_{B_0}$ by a path of base points of boxes in $\wt{\cC}$ with steps of}
\\
\mbox{$| \cdot |_\infty$-distance at most $\ov{K} L_0$}.
\end{array}
\end{equation}
As a consequence of (\ref{4.88}), we see that for any $B_0 \in \wt{\cC}$
\begin{equation}\label{4.90}
\begin{array}{l}
\mbox{the base points of the $L_0$-boxes in $\wt{\cC}_{B_0}$ lie at $| \cdot |_\infty$-distance at most ~~~~~~~~~  }
\\
\mbox{$J \, 10^d \, M^{(p+1) d} \,\ov{K} L_0 = H \ov{K} L_0 / 10$ from $x_{B_0}$}.
\end{array}
\end{equation}

\n
We then select in each $\wt{\cC}_{B_0}$ a unique box $\wh{B}_0$ in this collection (i.e.~$\wh{B}_0$ is a uniquely chosen representative in $\wt{\cC}_{B_0}$), and we write
\begin{equation}\label{4.91}
\mbox{$\wh{\cC}$ ($\subseteq \wt{\cC}$) for the collection $\{\wh{B}_0$; $B_0 \in \wt{\cC}\}$ (of chosen representatives)}.
\end{equation}

\medskip
\psfrag{B}{$B_0$}
\psfrag{whB0}{$\wh{B}_0$}
\psfrag{whB}{$\wh{B}'_0$}
\psfrag{B0}{$B'_0$}
\begin{center}
\includegraphics[width=10cm]{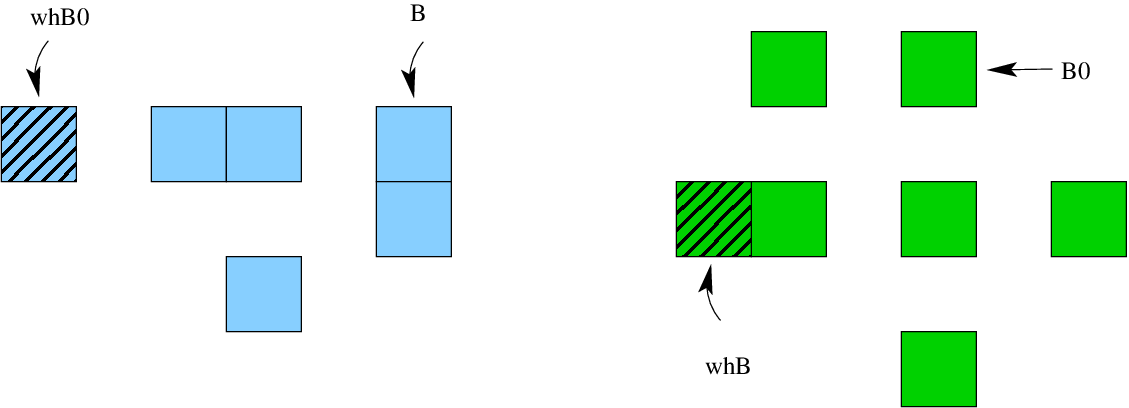}
\end{center}

\medskip
\begin{center}
\begin{tabular}{ll}
Fig.~3: & Two distinct components $\wt{\cC}_{B_0}$, $\wt{\cC}_{B'_0}$ depicted in blue and green colors, \\
& with respective representatives $\wh{B}_0$ and $\wh{B}'_0$ in $\wh{\cC}$ corresponding to the\\
& hatched boxes.
\end{tabular}
\end{center}

\n
We then find that $\wt{\cC} = \bigcup_{B_0 \in \wh{\cC}} \,\wt{\cC}_{B_0}$, that for each $B_0 \in \wt{\cC}$, $\wt{\cC}_{B_0} \cap \wh{\cC} = \{\wh{B}_0\}$, and that
\begin{equation}\label{4.92}
\left\{ \begin{array}{rl}
{\rm i)} & \mbox{for each $B_0 \in \wt{\cC}$, $|x_{B_0} - x_{\wh{B}_0}|_\infty \le H \ov{K} L_0 / 10$,}
\\[1ex]
{\rm ii)} & \mbox{the base points of distinct boxes of $\wh{\cC}$ have mutual $| \cdot |_\infty$-distance}
\\
&\mbox{bigger than $\ov{K} L_0$},
\\[1ex]
{\rm iii)} & \mbox{for any $B \in \cI_p$, $B \subseteq A$, and $\ell \in \cL_B$, the $x_{B_0}$ for $B_0 \in \wt{\cC}_{B,\ell}$ have}
\\
&\mbox{mutual $| \cdot |_\infty$-distance at least $H \ov{K} L_0$ (see (\ref{4.85}))}.
\end{array}\right.
\end{equation}

\n
We then introduce the dimension dependent constant
\begin{equation}\label{4.93}
\mbox{$c_{**} = \sup \{g (0, \wh{y}) / g(0, y)$; for $y, \wh{y} \in \IZ^d$ such that $|\wh{y}|_\infty \ge \fr \;|y|_\infty\} \in [1,\infty)$}.
\end{equation}

\n
The following lemma states a version of the informal principle ``a lower bound on the contraction of a map induces a lower bound on the capacity of the image of a set under this map''. More precisely, one has
\begin{lemma}\label{lem4.3}
If $F$ and $\wh{F}$ are two finite subsets of $\IZ^d$ for which there is a bijection $\wh{f}$: $F \r \wh{F}$ such that $|\wh{f} (x) - \wh{f}(y)|_\infty \ge \frac{1}{2} \;|x-y|_\infty$, for all $x,y$ in $F$, then
\begin{equation}\label{4.94}
{\rm cap} (\wh{F}) \ge {\rm cap} (F) / c_{**}.
\end{equation}
\end{lemma}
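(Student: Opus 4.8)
The plan is to run the classical variational lower bound for capacity, phrased in the potential-theoretic language recalled in Section~1 ($e_A$, $h_A$, and the identity $h_A = G e_A$ from (\ref{1.5})). First I would record the following elementary fact: if $A \subset\subset \IZ^d$ and $\rho$ is a non-negative measure supported on $A$ with $G\rho \le 1$ on $A$, then ${\rm cap}(A) \ge \rho(\IZ^d)$. Indeed,
\[
\rho(\IZ^d) = \langle \rho, 1 \rangle = \langle \rho, h_A \rangle = \langle \rho, G e_A \rangle = \langle G \rho, e_A \rangle \le \langle 1, e_A \rangle = {\rm cap}(A),
\]
using in turn that $h_A \equiv 1$ on ${\rm supp}(\rho) \subseteq A$, the identity $h_A = G e_A$, the symmetry of $g(\cdot,\cdot)$, and finally $G\rho \le 1$ on ${\rm supp}(e_A) \subseteq \partial_i A \subseteq A$.

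The second step transports the equilibrium measure of $F$: let $\wh{\nu}$ be the image measure of $e_F$ under $\wh{f}$, that is $\wh{\nu}(\wh{f}(x)) = e_F(x)$ for $x \in F$, which (as $\wh{f}$ is a bijection from $F$ onto $\wh{F}$) is a measure supported on $\wh{F}$ with $\wh{\nu}(\IZ^d) = e_F(\IZ^d) = {\rm cap}(F)$. The core estimate is a pointwise bound on $G\wh{\nu}$ over $\wh{F}$. For $x, y \in F$, translation invariance gives $g(\wh{f}(x), \wh{f}(y)) = g(0, \wh{f}(x) - \wh{f}(y))$ and $g(x,y) = g(0, x - y)$; since $|\wh{f}(x) - \wh{f}(y)|_\infty \ge \frac{1}{2} |x - y|_\infty$ by hypothesis, the definition (\ref{4.93}) of $c_{**}$ yields $g(\wh{f}(x), \wh{f}(y)) \le c_{**}\, g(x,y)$ (when $x = y$ both sides equal $g(0,0)$ and $c_{**} \ge 1$). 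Summing against $e_F$ and using $h_F = G e_F \le 1$,
\[
G\wh{\nu}\big(\wh{f}(x)\big) = \sum_{y \in F} g\big(\wh{f}(x), \wh{f}(y)\big)\, e_F(y) \le c_{**} \sum_{y \in F} g(x, y)\, e_F(y) = c_{**}\, h_F(x) \le c_{**}
\]
for every $x \in F$, i.e.\ $G\wh{\nu} \le c_{**}$ on $\wh{F}$.

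Finally I would apply the fact from the first step to $A = \wh{F}$ and $\rho = c_{**}^{-1}\, \wh{\nu}$: this $\rho$ is supported on $\wh{F}$ and satisfies $G\rho \le 1$ on $\wh{F}$ by the previous display, so ${\rm cap}(\wh{F}) \ge \rho(\IZ^d) = c_{**}^{-1}\, {\rm cap}(F)$, which is (\ref{4.94}). I do not expect a genuine obstacle here; the only points needing a word of care are that $\wh{f}$ being a bijection makes $\wh{\nu}$ a genuine measure on $\wh{F}$ of total mass ${\rm cap}(F)$, that $e_F$ is carried by $F$ (indeed by $\partial_i F$) so that $h_F \equiv 1$ on its support, and the finiteness $c_{**} < \infty$, which is already asserted in (\ref{4.93}) and in any case follows from $g(0,z) \asymp |z|_\infty^{2-d}$ away from the origin together with $g(0,0) \in (0,\infty)$.
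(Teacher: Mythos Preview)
Your proof is correct and follows essentially the same approach as the paper's own proof: push forward $e_F$ via $\wh{f}$ to obtain a measure on $\wh{F}$, bound its potential by $c_{**}$ on $\wh{F}$ using the definition (\ref{4.93}), and conclude via the duality $\langle G\rho, e_{\wh{F}}\rangle = \langle \rho, G e_{\wh{F}}\rangle$. The only cosmetic difference is that you isolate the variational lower bound as a preliminary fact and apply it to $\rho = c_{**}^{-1}\,\wh{\nu}$, whereas the paper carries out the same chain of equalities inline.
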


\begin{proof}
We consider the equilibrium measure $e_F$ of $F$ and its image $\wh{e}$ under $\wh{f}$. So, $\wh{e}$ is supported by $\wh{F}$ and for each $x \in F$, $\wh{e} \,(\wh{f}(x)) = e_F(x)$. Then, for each $\wh{x} = \wh{f} (x)$ in $\wh{F}$, we have (see (1.2) for notation)
\begin{equation*}
G \, \wh{e} (\wh{x}) = \dsl_{\wh{y} \in \wh{F}} \,g(\wh{x}, \wh{y}) \;\wh{e} (\wh{y}) = \dsl_{y \in F} g\big(\wh{f}(x), \, \wh{f}(y)\big) \,e_F(y) \stackrel{(\ref{4.93})}{\le} c_{**} \dsl_{y \in F} g(x,y) \, e_F(y) = c_{**},
\end{equation*}
since $G e_F = 1$ on $F$. So $G \wh{e} \le c_{**}$ on $\wh{F}$ and $\wh{e}$ is supported by $\wh{F}$, hence we have
\begin{equation*}
{\rm cap}(F) = e_F(F) = \wh{e} (\wh{F}) \stackrel{(\ref{1.5})}{=} \langle \wh{e}, G \,e_{\wh{F}}\rangle = \langle G \,\wh{e}, e_{\wh{F}}\rangle \le c_{**} \,e_{\wh{F}} (\wh{F}) = c_{**} \,{\rm cap}(\wh{F}).
\end{equation*}
The claim (\ref{4.94}) now follows.
\end{proof}

Given $B \in \cI_p$ contained in $A$ and $\ell \in \cL_B$, we can apply Lemma \ref{lem4.3} to $F = \bigcup_{\wt{\cC}_{B,\ell}} B_0$ and $\wh{F} = \bigcup_{\wt{\cC}_{B,\ell}} \wh{B}_0$, taking $\wh{f}$ to be the map from $F$ onto $\wh{F}$ that sends each $B_0 \subseteq F$ to $\wh{B}_0 \subseteq \wh{F}$ through translation by the vector $x_{\wh{B}_0} - x_{B_0}$. So, $|\wh{f}(x) - \wh{f}(y)|_\infty = |x-y|_\infty$ if $x,y$ belong to the same $B_0 \in \wt{\cC}_{B,\ell}$, and by (\ref{4.92}) i) and iii) when $x,y$ belong to distinct $L_0$-boxes in $\wt{\cC}_{B,\ell}$, then $|\wh{f}(x) - \wh{f}(y)|_\infty \stackrel{{\rm i)}}{\ge} |x-y|_\infty - 2 H \ov{K} L_0 / 10 \stackrel{{\rm iii)}}{\ge} \frac{1}{2} \;|x-y|_\infty$.

\medskip
Hence, $\wh{f}$ satisfies the assumptions of Lemma \ref{lem4.3} so that
\begin{equation}\label{4.95}
\begin{array}{l}
\mbox{for any $B \in \cI_p$ contained in $A$ and $\ell \in \cL_B$},
\\
{\rm cap} \,\big(\bigcup_{\wt{\cC}_{B,\ell}} \wh{B}_0\big) \ge c^{-1}_{**} \;{\rm cap} \,\big(\bigcup\limits_{\wt{\cC}_{B,\ell}} B_0\big) \stackrel{(\ref{4.86})}{\ge} c\, R^{d-2}_{\ell}.
\end{array}
\end{equation}

\n
In addition, by (\ref{4.92}) i) and the fact that the $L_0$-boxes in $\wt{\cC}_{B,\ell}$ are contained in $B(x_B, 8R_\ell)$, see (\ref{4.85}), we find that when $N$ is large on $\Omega_{\ve,N}$ (see (\ref{4.6})):
\begin{equation}\label{4.96}
\mbox{for any $B \in \cI_p$ contained in $A$ and $\ell \in \cL_B$, $\bigcup_{\wt{\cC}_{B,\ell}} \wh{B}_0 \subseteq B(x_B, 10 R_\ell)$}.
\end{equation}

\n
In view of (\ref{4.95}) and (\ref{4.92}) ii) we can now apply Lemma \ref{lem1.2}, so that for large $N$ on $\Omega_{\ve,N}$ in (\ref{4.6}),
\begin{equation}\label{4.97}
\left\{ \begin{array}{l}
\mbox{for each $B \in \cI_p$ contained in $A$ and $\ell \in \cL_B$, there is a sub-collection $\cC_{B,\ell}$ of}
\\
\mbox{$\{\wh{B}_0$; $B_0 \in \wt{\cC}_{B,\ell}\}$ of at most $c_8 (R_\ell /L_0)^{d-2}$-boxes, all contained in $B(x_B, 10 R_\ell)$},
\\
\mbox{and such that ${\rm cap} \big(\bigcup_{\cC_{B,\ell}} B_0\big) \ge c\, R_\ell^{d-2}$}.
\end{array}\right.
\end{equation}

\n
As we now explain, for $B, \ell$ as above, the simple random walk starting in $B(x_B,10R_\ell)$ enters $\bigcup_{\cC_{B,\ell}}B_0$ before exiting $B(x_B, 10 R_{\ell -1})$ with a non-degenerate probability. For this purpose we write $g_{B,\ell}(\cdot, \cdot)$ for the Green function of the walk killed outside $B(x_B,10 R_{\ell - 1})$. We know by (\ref{1.4}) that $g_{B,\ell}(x,y) \ge c_\Delta \, g(x,y)$ when $x,y \in B(x_B, 10 R_\ell)$. In addition, setting $F = \bigcup_{\cC_{B,\ell}} B_0$ ($\subseteq B(x_B,10R_\ell)$), the equilibrium measure of $F$ for the walk killed outside $B(x_B, 10 R_{\ell - 1})$ is supported by $F$ and dominates $e_F$, see below (\ref{1.5}). It now follows that
\begin{equation}\label{4.98}
\begin{split}
\mbox{for $x \in B(x_B, 10R_\ell), \; P_x\big[H_{\bigcup_{\cC_{B,\ell}}B_0} < T_{B(x_B,10 R_{\ell - 1})}\big]$} & \ge c_\Delta \;P_x\big[H_{\bigcup_{\cC_{B,\ell}}B_0} < \infty\big]
\\
&\!\!\!\!\! \stackrel{(\ref{1.5}),(\ref{4.97})}{\ge} c_9 \; (> 0).
\end{split}
\end{equation}

\n
Since all $\cC_{B,\ell}$ are contained in $\wh{\cC}$, we find by (\ref{4.92}) ii) that the base points of the $B_0$-boxes in $\cC \stackrel{\rm def}{=} \bigcup_{B \in \cI_p, B \subseteq A, \ell \in \cL_B} \cC_{B,\ell}$ have mutual $|\, \cdot \,|_\infty$-distance at least $\ov{K} L_0$. Combined with (\ref{4.97}) - (\ref{4.98}), we see that the collections $\cC_{B,\ell}$ of $(\alpha,\beta,\gamma)$-good $B_0$-boxes with $N_u(D_0) \ge \beta\,{\rm cap}(D_0)$ and intersecting $[-3N,3N]^d$ fulfill (\ref{4.66}) - (\ref{4.69}), and Proposition \ref{prop4.2} is proved. \hfill $\square$

\medskip
We proceed with the proof of Theorem \ref{theo4.1}. We will now combine (\ref{4.65}) and Proposition \ref{prop4.2}. Thus, for large $N$ on the event $\Omega_{\ve,N}$ in (\ref{4.6}), we see that except maybe on a set of at most $\frac{\ve}{5} \, |D_N|$ points in the bubble set $Bub$, the simple random walk starting at $x \in Bub$ enters the set $A$ in (\ref{4.64}) with probability at least $1 - (1-a)/10$ and once in $A$ enters $\bigcup_{\cC} B_0$ with probability at least $1 - (1- c_9)^J$ (using the strong Markov property at the successive times of exit of $B(x_B, 10 R_{\ell - 1})$, $1 \le \ell \le p$, and (\ref{4.69}), if $B$ stands for the box of $\cI_p$ contained in $A$ where the walk first enters $A$).

\medskip
We can now choose $J$ as a function of $a \in (0,1)$ via
\begin{equation}\label{4.99}
\mbox{$J(a) \ge 1$ is such that $(1-a) / 10 + (1 - c_9)^J < 1-a$}.
\end{equation}
We have now obtained that for large $N$ on the event $\Omega_{\ve,N}$ in (\ref{4.6})
\begin{equation}\label{4.100}
\left\{ \begin{array}{l}
\mbox{for each $B \in \cI_p$, $B \subseteq [-4N,4N]^d$ one can select a subset $\cL_B$ of $\{1,\dots , p\}$}
\\
\mbox{with $0$ or $J$ elements, and for each $B$ and $\ell \in \cL_B$ a collection $\cC_{B,\ell}$}
\\
\mbox{(possibly empty) of at most $c_8(R_\ell / L_0)^{d-2} (\alpha,\beta,\gamma)$-good $B_0$-boxes with}
\\
\mbox{$N_u(D_0) \ge \beta \, {\rm cap} (D_0)$, contained in $B(x_B, 10R_\ell) \cap (-4N,4N)^d$, so that}
\\
\mbox{the base points of all these boxes as $B$ and $\ell \in \cL_B$ vary, keep a mutual}
\\
\mbox{distance at least $\ov{K} L_0$, and so that denoting  their union by}
\\
\mbox{$C = \bigcup_{B, \ell \in \cL_B} \bigcup_{\cC_{B,\ell}} B_0$, one has $|\{x\in Bub$; $P_x [H_C < \infty] < a\}| \le \frac{\ve}{5} \;|D_N|$}.
\end{array}\right.
\end{equation}

\medskip\n
Thus, for large $N$ we set $C_\omega = \phi$, on $\cB^c_N \backslash \Omega_{\ve,N}$, and on $\Omega_{\ve,N}$ construct a measurable choice $\cL_{\cB,\omega}$ of $\cL_B$ for $B \in \cI_p$, $B \subseteq [-4N,4N]^d$ and $\cC_{B,\ell,\omega}$ of $\cC_{B,\ell}$ when $\ell \in \cL_{B,\omega}$, and set $C_\omega = \bigcup_{B,\ell \in \cL_{B,\omega}} \bigcup_{\cC_{B,\ell,\omega}} B_0$. The conditions (\ref{4.5}) i), ii), v) are fulfilled. Concerning (\ref{4.5}) iv), note that the $2 \ov{K} L_1$-neighborhood of $C_\omega$ has volume at most $c\, \ov{K}\,\!^d L_1^d (N | L_0)^{d-2} \times J \times c\, M^{(p+1) d} = o(N^d)$, by (\ref{1.9}), (\ref{1.10}), so that (\ref{4.5}) iv) holds as well.

\medskip
There remains to check (\ref{4.5}) iii). For each $B \in \cI_p$ included in $ [-4N,4N]^d$, there are at most $(p^J + 1)$ possibilities for $\cL_{B,\omega}$, and for each $\cL_{B,\omega}$, which is not empty, and $\ell \in \cL_{B,\omega}$, one has at most $\exp\{c\, (R_\ell /L_0)^{d-2} \log (R_\ell / L_0)\} \le \exp\{c \,(N /L_0)^{d-2} \log (N /L_0)\}$ possible choices for the collection $\cC_{B,\ell,\omega}$ of the at most $c_8 (R_\ell / L_0)^{d-2}$ boxes $B_0$ in $B(x_B, 10R_\ell)$, and hence in total at most $(p^J + 1)^{c M^{(p+1)d}} \exp\{ c\, J \,M^{(p+1)d} (N/L_0)^{d-2} \log (N/L_0)\} = \exp\{o(N^{d-2})\}$ possibilities. This shows that for large $N$, $C_\omega$ varies in a set $\cS_N$ with $\exp\{o(N^{d-2})\}$ elements and (\ref{4.5}) iii) holds as well. This completes the proof of Theorem \ref{theo4.1}. \hfill $\square$

\section{The asymptotic upper bound}
\setcounter{equation}{0}

In this section we derive the main asymptotic upper bound on the exponential rate of decay of the probability of an excess of disconnected points corresponding to the event $\cA_N = \{|D_N \, \backslash \, \cC^u_{2N}| \ge \nu\, |D_N|\}$ from (\ref{0.9}). With Theorem \ref{theo4.1} now available, most of the remaining task has already been carried out in Section 4 of \cite{Szni21b}. Importantly, if as expected the identity $\ov{u} = u_*$ holds, the asymptotic upper bound of Theorem \ref{theo5.1} below actually governs the exponential decay of $\IP[\cA_N]$, see Remark \ref{rem5.2} 1). An application of Theorem \ref{theo5.1} to the simple random walk is given in Corollary \ref{cor5.3}.

\medskip
We assume that (see (\ref{1.16})):
\begin{equation}\label{5.1}
0 < u < \ov{u},
\end{equation}
and we define the function (see (\ref{0.2}) for notation):
\begin{equation}\label{5.2}
\wh{\theta} (v) = \left\{ \begin{array}{l}
\theta_0(v), \; \mbox{for $0 \le v < \ov{u}$},
\\[0.5ex]
1, \; \mbox{for $v \ge \ov{u}$},
\end{array}\right.
\end{equation}
and for $\nu \in [\theta_0(u), 1)$, we set
\begin{equation}\label{5.3}
\wh{J}_{u,\nu} = \min \Big\{ \mbox{\f $\dis\frac{1}{2d}$} \;\dis\int_{\IR^d} | \nabla \varphi|^2 dz; \; \varphi \ge 0, \varphi \in D^1(\IR^d), \;\strokedint_D \wh{\theta} \big((\sqrt{u} + \varphi)^2\big) \,dz \ge \nu\Big\}.
\end{equation}

\n
The existence of a minimizer for (\ref{5.3}) is established by the same argument as in the case of $\ov{J}_{u,\nu}$, see Theorem 2 of \cite{Szni21}, which corresponds to the function $\ov{\theta}_0$ ($\le \wh{\theta}$) in place of $\wh{\theta}$, see (\ref{0.11}). In addition for $u$ and $\nu$ as above, since $\ov{\theta}_0 \le \wh{\theta}$, one has
\begin{equation}\label{5.4}
\wh{J}_{u,\nu} \le \ov{J}_{u,\nu} \quad \mbox{(and these quantities coincide if $\ov{u} = u_*$)}.
\end{equation}

\begin{theorem}\label{theo5.1}
Consider $u$ as in (\ref{5.1}) and $\nu \in [\theta_0(u),1)$, then
\begin{equation}\label{5.5}
\limsup\limits_N \; \mbox{\f $\dis\frac{1}{N^{d-2}}$} \; \log \IP[\cA_N] \le - \wh{J}_{u,\nu} \quad \mbox{(see (\ref{0.9}) and (\ref{5.3}) for notation)}.
\end{equation}
\end{theorem}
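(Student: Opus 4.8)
The plan is to reduce the disconnection event $\cA_N$ to a controlled combination of two contributions: a ``fence'' of tilted interlacements that accounts for the bulk of the disconnected volume, and the bubble set $Bub$ whose cost is now captured by the coarse grained set $C_\omega$ from Theorem \ref{theo4.1}. Since the hard analytic work on the bubble set is done, most of the argument follows Section 4 of \cite{Szni21b}, and I would set things up so that the improvement from $\theta^*$ to $\wh\theta$ is transparent.

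First I would fix a level $u'$ slightly below $\ov u$ and small $\delta, \ve > 0$, and introduce the scales $L_0, L_1$ and the random sets $\cU_1, \cU_0, Bub$ of Section 1 with parameters $\alpha > \beta > \gamma$ in $(u,\ov u)$ chosen close to $\ov u$. On the bad event $\cB_N$ (rarity of $(\alpha,\beta,\gamma)$-bad boxes, Lemma \ref{lem1.3}) the probability is super-exponentially small, so we may work on $\cB_N^c$. Next, on $\cB_N^c$, if $|Bub| < \ve|D_N|$ one argues directly as in \cite{Szni21b}: the set of points of $D_N$ disconnected by $\cI^u$ from $S_{2N}$ but lying outside $Bub$ must be ``surrounded'' by $\cI^u$ at the scale $L_1$, and a coarse graining of this interface together with the change-of-measure/entropy bound (as in the treatment of $\cA_N$ in \cite{Szni21b}, Section 4) produces, in the limit, a function $\varphi \ge 0$ in $D^1(\IR^d)$ with $(\sqrt u + \varphi)^2$ playing the role of the local level and $\strokedint_D \ov\theta_0((\sqrt u+\varphi)^2)\,dz \ge \nu - c\ve$, at exponential cost $\ge \frac{1}{2d}\int|\nabla\varphi|^2 - o(1)$ per $N^{d-2}$.

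The new ingredient enters when $|Bub| \ge \ve|D_N|$. Here I invoke Theorem \ref{theo4.1} with $a$ chosen close to $1$ (how close depends on $\ve$): on $\cB_N^c$ we get a random set $C_\omega \subseteq [-4N,4N]^d$, union of $(\alpha,\beta,\gamma)$-good $B_0$-boxes with $N_u(D_0) \ge \beta\,{\rm cap}(D_0)$, well-spaced, of complexity $e^{o(N^{d-2})}$, with $2\ov K L_1$-neighbourhood of volume $\le \ve|D_N|$, and such that $h_{C_\omega} \ge a$ on all of $Bub$ except a set of volume $\le \ve|D_N|$. The point (exactly as in Proposition 4.1 of \cite{Szni21b}) is that the interlacement trajectories must ``pay'' to make the $B_0$-boxes of $C_\omega$ have local level $\ge \beta$: a soft-local-time / change-of-measure estimate bounds $\IP[\cdot\,; C_\omega = \mathsf{C}]$ for a fixed admissible shape $\mathsf C$ by $\exp\{-\frac{1}{2d}(\text{something like }(\sqrt\beta-\sqrt u)^2)\,{\rm cap}(\mathsf C) + o(N^{d-2})\}$, and summing over the $e^{o(N^{d-2})}$ shapes is harmless. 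Since $h_{C_\omega}\approx 1$ on essentially all of $Bub$, the equilibrium potential of $C_\omega$ (suitably rescaled, via the $\Gamma$-convergence / Dirichlet-energy lower-semicontinuity argument of \cite{Szni21b} combined with the capacity lower bound) gives a limiting profile $\varphi$ with $(\sqrt u + \varphi)^2$ reaching $\ge \beta$ on the macroscopic region corresponding to $Bub$, i.e. $\varphi \approx \sqrt\beta - \sqrt u$ there; letting $\beta \uparrow \ov u$ and $a\uparrow 1$ this region contributes $\wh\theta((\sqrt u+\varphi)^2) = 1$ to the constraint. Combining the bubble region with the ``fence'' region one assembles, in the $N\to\infty$ limit, an admissible $\varphi \ge 0$ in $D^1(\IR^d)$ for the variational problem (\ref{5.3}) with the relaxed constraint $\strokedint_D \wh\theta((\sqrt u+\varphi)^2)\,dz \ge \nu - c\ve$, and total Dirichlet cost matching the exponential rate. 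Finally, one lets $\ve \downarrow 0$, $u' \uparrow \ov u$, uses continuity of $\wh J_{u,\nu-c\ve}$ in its volume parameter (the same compactness argument as for the existence of minimizers, Theorem 2 of \cite{Szni21}), and concludes $\limsup_N N^{-(d-2)}\log\IP[\cA_N] \le -\wh J_{u,\nu}$.

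The main obstacle, as in \cite{Szni21b}, is the bookkeeping that glues the two contributions: one must ensure that the ``fence'' coarse graining and the set $C_\omega$ do not double-count capacity or volume, that the disconnected points not covered by either mechanism form a vanishing fraction of $|D_N|$, and that the change-of-measure cost is genuinely additive (Dirichlet energy of a sum of well-separated profiles) up to $o(N^{d-2})$. The technical heart — extracting $C_\omega$ with $h_{C_\omega}\ge a$ for $a$ arbitrarily close to $1$ — has been isolated in Theorem \ref{theo4.1}, so here the work is to feed it into the scheme of Section 4 of \cite{Szni21b} and track how replacing $c_0$ by $1$ upgrades $\theta^*$ to $\wh\theta$ in the constraint of the variational problem.
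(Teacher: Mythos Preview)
Your proposal is correct in spirit and follows essentially the same route as the paper, but you are re-sketching the machinery of Section~4 of \cite{Szni21b} rather than citing it. The paper's proof is much shorter and more modular: it observes that Theorem~4.3 of \cite{Szni21b} is already formulated so that, once one plugs in Theorem~\ref{theo4.1} with an arbitrary $a\in(0,1)$ in place of the fixed constant $c_0$, it directly yields
\[
\limsup_N \frac{1}{N^{d-2}}\log\IP[\cA_N]\le -J^{(a)}_{u,\nu},
\]
where $J^{(a)}_{u,\nu}$ is defined as in (\ref{5.3}) with $\wh\theta$ replaced by the function $\theta^{(a)}$ that jumps to $1$ at the threshold $(\sqrt u + a(\sqrt{\ov u}-\sqrt u))^2$ instead of $\ov u$. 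All your ``fence versus bubble'' decomposition, the change-of-measure cost attached to $C_\omega$, the low complexity of $\cS_N$, and the gluing of the two contributions are already packaged inside Theorem~4.3 of \cite{Szni21b}; nothing needs to be re-done or tracked.

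The only genuinely new step is then to let $a\uparrow 1$ and show $J^{(a)}_{u,\nu}\to\wh J_{u,\nu}$. The paper does this by a direct compactness argument on minimizers: pick $a_n\uparrow 1$ and minimizers $\varphi_n$ for $J^{(a_n)}_{u,\nu}$, extract a subsequence converging in $L^2_{\rm loc}$ and a.e.\ to some $\varphi\in D^1(\IR^d)$, use lower semicontinuity of the Dirichlet energy, and use reverse Fatou (together with continuity of $\theta_0$ below $u_*$) for the constraint. This replaces your several nested limits ($\ve\downarrow 0$, $\beta\uparrow\ov u$, $a\uparrow 1$, $u'\uparrow\ov u$, plus continuity of $\nu\mapsto\wh J_{u,\nu}$) by a single limit $a\to 1$ at the level of the variational problem, which is cleaner and avoids having to justify left-continuity in the volume parameter $\nu$.
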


\begin{proof}
Consider $a \in (0,1)$ and set $\theta^{(a)}$ and $J^{(a)}_{u,\nu}$ as in (\ref{5.2}), (\ref{5.3}) where $\ov{u}$ is replaced by $(\sqrt{u} + a(\sqrt{\ov{u}} - \sqrt{u}))^2 < \ov{u}$ in the definition of $\theta^{(a)}$, and $\wh{\theta}$ is replaced by $\theta^{(a)}$ ($\ge \wh{\theta}$) in the definition of $J^{(a)}_{u,\nu}$. Then, combining Theorem \ref{theo4.1} of the previous section and Theorem 4.3 of \cite{Szni21b}, we find that for any $a \in (0,1)$,
\begin{equation}\label{5.6}
\limsup\limits_N \; \mbox{\f $\dis\frac{1}{N^{d-2}}$} \; \log \IP[\cA_N] \le - J^{(a)}_{u,\nu} .
\end{equation}

\n
The functions $\theta^{(a)}$ decrease as $a$ increases and $\lim_{a \r 1} \theta^{(a)} (v) = \wh{\theta}(v)$ for all $v \ge 0$. It follows that $J^{(a)}_{u,\nu} \le \wh{J}_{u,\nu}$ and $J^{(a)}_{u,\nu}$ is a non-decreasing function of $a$. As we now explain, $\lim_{a \r 1} J^{(a)}_{u,\nu} = \wh{J}_{u,\nu}$. The argument is similar to (4.43), (4.44) of \cite{Szni21b}. Namely, we consider an increasing sequence $a_n$ in $(0,1)$ tending to $1$, and for each $n \ge 1$ a minimizer $\varphi_n$ for $J^{(a_n)}_{u,\nu}$. By Theorem 8.6, p.~208 and Corollary 8.7, p.~212 of \cite{LiebLoss01}, we can extract a subsequence $\varphi_{n_\ell}$, $\ell \ge 1$, converging in $L^2_{\rm loc}(\IR^d)$ and a.e.~to $\varphi$ belonging to $D^1(\IR^d)$ such that
\begin{equation}\label{5.7}
 \mbox{\f $\dis\frac{1}{2d}$} \dis\int_{\IR^d} | \nabla \varphi |^2 dz \le \liminf\limits_\ell  \;  \mbox{\f $\dis\frac{1}{2d}$}  \dis\int_{\IR^d} \; |\nabla \varphi_{n_\ell}|^2 dz = \lim\limits_n  J^{(a_n)}_{u,\nu} .
\end{equation}
Moreover, we have $\wh{\theta} ((\sqrt{u} + \varphi)^2) \ge \limsup_\ell \theta^{(a_{n_\ell})} ((\sqrt{u} + \varphi_{n_\ell})^2)$ a.e, so that
\begin{equation}\label{5.8}
\begin{array}{l}
\dis\strokedint_D \wh{\theta} \big((\sqrt{u} + \varphi)^2\big) \,dz \ge \dis\strokedint_D \limsup\limits_\ell \theta^{(a_{n_\ell})} ((\sqrt{u} + \varphi_{n_\ell})^2) dz
\\[1ex]
\hspace{2.8cm}\, \stackrel{\rm reverse\,Fatou}{\ge} \limsup\limits_\ell  \dis\strokedint_D \theta^{(a_{n_\ell})}  ((\sqrt{u} + \varphi_{n_\ell})^2) dz \ge \nu.
\end{array}
\end{equation}
This shows that $\wh{J}_{u,\nu} \le \lim_n J^{(a_n)}_{u,\nu} = \lim_{a \r 1} J^{(a)}_{u,\nu} \le \wh{J}_{u,\nu}$, so that
\begin{equation}\label{5.9}
\lim\limits_{a \r 1} \; J^{(a)}_{u,\nu} = \wh{J}_{u,\nu}.
\end{equation}
Letting $a$ tend to $1$ in (\ref{5.6}) thus yields (\ref{5.5}) and Theorem \ref{theo5.1} is proved.
\end{proof}

\begin{remark}\label{rem5.2} \rm 1) If the identity $\ov{u} = u_*$ holds (this is the object of active research, and the corresponding identity in the closely related model of the level-set percolation of the Gaussian free field has been established in \cite{DumiGoswRodrSeve20}), then Theorem \ref{theo5.1} and the lower bound (\ref{0.10}) from \cite{Szni21a} and \cite{Szni21} show that for $0 < u < u_*$ and $\nu \in  [ \theta_0 (u),1)$ one has in the notation of (\ref{0.9})
\begin{equation}\label{5.10}
\lim\limits_N \; \mbox{\f $\dis\frac{1}{N^{d-2}}$} \; \log \IP[\cA_N] = \lim\limits_N \; \mbox{\f $\dis\frac{1}{N^{d-2}}$} \; \log \IP[\cA^0_N]  = - \ov{J}_{u,\nu}.
\end{equation}

\n
2) The event $\cA_N$ in Theorem \ref{theo5.1} pertains to an excessive fraction of points in $D_N$ disconnected by $\cI^u$ from $S_{2N}$. One can replace $2$ by an arbitrary integer $m \ge  1$ and instead consider the events $\{|D_N \,\backslash \, \cC^u_{mN}| \ge \nu\,|D_N|\}$, which correspond to an excessive fraction of points in  $D_N$ that are disconnected by $\cI^u$ from $S_{mN}$ (these events are non-decreasing in~$m$). The proof of Theorem \ref{theo5.1} (based on Section 4 and on \cite{Szni21b}) can straightforwardly be adapted to show that for arbitrary $m \ge 1$, $0 < u < \ov{u}$, and $\nu \in [\theta_0(u) ,1)$ one has
\begin{equation}\label{5.11new}
\limsup\limits_N \; \mbox{\f $\dis\frac{1}{N^{d-2}}$}\; \log \IP [|D_N \, \backslash \, \cC^u_{mN}| \ge \nu\, |D_N| ] \le - \wh{J}_{u,\nu}.
\end{equation}

\n
In particular, if as expected $\ov{u}$ and $u_*$ coincide, this upper bound combined with the lower bound (\ref{0.10}), proves that for any $m \ge 1$, $0 < u < u_*$ and $\nu \in [\theta_0(u),1)$
\begin{equation}\label{5.12new}
\lim\limits_N \; \mbox{\f $\dis\frac{1}{N^{d-2}}$}\; \log \IP [|D_N \, \backslash \, \cC^u_{mN}| \ge \nu\, |D_N| ]= - \ov{J}_{u,\nu}.
\end{equation}
(this extends (\ref{5.10})).

\medskip
However, it remains open whether for the larger events $\{|D_N \, \backslash \, \cC^u_\infty| \ge \nu\, |D_N|\}$ corresponding to an excessive fraction of points in $D_N$ disconnected by $\cI^u$ from infinity, one has a similar asymptotics. Namely, is it the case that for all $0 < u < u_*$ and $\nu \in [\theta_0(u),1)$
\begin{equation}\label{5.13new}
\lim\limits_N \; \mbox{\f $\dis\frac{1}{N^{d-2}}$}\; \log \IP [|D_N \, \backslash \, \cC^u_{\infty}| \ge \nu\, |D_N| ]= - \ov{J}_{u,\nu}\,?
\end{equation}

\n
In the context of the Wulff droplet for super-critical Bernoulli percolation, we refer to Theorem 2.12 of \cite{Cerf00} for a corresponding result (the leading rate of decay of the asymptotics is in that case $N^{d-1}$, i.e. ``surface like'', and not $N^{d-2}$, i.e.~``capacity like'', as here).

\bigskip\n
3)  It is a natural question whether for $\nu$ close to $1$ the minimizers $\varphi$ for $\ov{J}_{u,\nu}$ in (\ref{0.11}) do reach the maximum possible value $\sqrt{u}_* - \sqrt{u}$ on a set of positive Lebesgue measure. As explained below (\ref{0.14}) this occurrence could reflect the presence of droplets secluded by the interlacements and contributing to the excess fraction of disconnected points when $\cA_N$ happens. If $\theta_0$ is discontinuous at $u_*$ (a not very plausible possibility), the minimizers for $\ov{J}_{u,\nu}$ are easily seen to reach the value $\sqrt{u}_* - \sqrt{u}$ on a set of positive measure when $\nu$ is close to $1$, see Remark 2 of \cite{Szni21}. But otherwise the situation is unclear, for the behavior of $\theta_0$ close to $u_*$ is very poorly understood. The same is true in the case of the percolation function $\theta^G_0$ for the level-set percolation of the Gaussian free field on $\IZ^d$, see above (\ref{0.15a}). Interestingly, in the case of the cable graph on $\IZ^d$ the corresponding function $\wt{\theta}^G_0$ is explicit. The critical level is $0$ and $\wt{\theta}^G_0(h) = 2 \Phi (h \wedge  0)$, for $h \in \IR$, where $\Phi$ denotes the distribution function of a centered Gaussian variable with variance $g(0,0)$, see Corollary 2.1 of \cite{DrewPrevRodr21}. However, in the case of $\IZ^d$, $d = 3$, the simulations in Figure 4 of \cite{MariLebo06} suggest a behavior of $\theta^G_0$ close to the critical level $h_*$ different from that of $\wt{\theta}^G_0$ near the critical level $0$. 

\medskip
Coming back to the original question whether for $\nu$ close to $1$ the minimizers for $\ov{J}_{u,\nu}$ reach the maximal value $\sqrt{u}_* - \sqrt{u}$ on a set of positive measure, let us mention that the question has a similar flavor to the problem concerning the existence of {\it dead core solutions} for semilinear equations $\frac{1}{2d} \,\Delta v = f(v)$, i.e.~non-negative solutions in a bounded domain $U$ of $\IR^d$, which vanish on a relatively compact open subset $V$ in $U$ and are positive in $U \backslash \ov{V}$, see \cite{PuccSerr04}. Quite informally, assuming $\theta_0$ to be $C^1$ to simplify the discussion, the link goes via the consideration of $v = \sqrt{u}_* - \sqrt{u} - \varphi$, where $\varphi$ minimizer for $\ov{J}_{u.\nu}$ satisfies an Euler-Lagrange equation $-\frac{1}{2} \, \Delta \varphi =  \lambda \eta ' (\varphi) \,1_D$, with $\lambda > 0$ a Lagrange multiplier and $\eta(b) = \theta_0 ((\sqrt{u} + b)^2)$, see for instance Lemma 5 of \cite{Szni21}. Under suitable assumptions on $f$ the reference \cite{PuccSerr04} provides an integral criterion, which characterizes the existence of dead core solutions, see Theorems 1.1, 7.2 and 7.3 of  \cite{PuccSerr04}. Brought into our context, these results raise the question: does the convergence of $\int^\delta_0 \{1 - \eta(\sqrt{u}_* - \sqrt{u} - a)\}^{-1/2} \,da$ for small $\delta > 0$ ensure that for $\nu$ close to $1$ the minimizers for $\ov{J}_{u,\nu}$ take the value $\sqrt{u}_* - \sqrt{u}$ on a set of positive Lebesgue measure? \hfill $\square$
\end{remark}

\medskip
The above Theorem \ref{theo5.1} also has an immediate application to a similar upper bound, where the simple random walk replaces the random interlacements. Informally, this corresponds to taking the singular limit $u \r 0$ in (\ref{5.5}). Specifically, we denote by $\cI$ the set of points in $\IZ^d$ visited by the simple random walk, and by $\cC_{2N}$ the connected component of $S_{2N}$ in $(\IZ^d \backslash \cI) \cup S_{2N}$. One has
\begin{corollary}\label{cor5.3}
For $\nu \in [0,1)$ and $x \in \IZ^d$,
\begin{align}
& \limsup\limits_N \; \mbox{\f $\dis\frac{1}{N^{d-2}}$} \; \log P_x [|D_N \,\backslash\,\cC_{2N}| \ge \nu\,|D_N|] \le - \wh{J}_\nu, \; \mbox{where} \label{5.11}
\\[1ex]
& \wh{J}_\nu = \min \Big\{\mbox{\f $\dis\frac{1}{2d}$} \; \dis\int_{\IR^d} |\nabla \varphi |^2\, dz; \; \varphi \ge 0, \varphi \in D^1(\IR^d), \; \dis\strokedint_D \wh{\theta} (\varphi^2) \,dz \ge \nu\Big\}. \label{5.12}
\end{align}
\end{corollary}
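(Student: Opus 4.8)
The plan is to obtain Corollary \ref{cor5.3} from Theorem \ref{theo5.1} by letting the interlacement level $u$ tend to $0$, exploiting the standard comparison between the trace of a single simple random walk trajectory and the random interlacements at a small level. First I would recall (see Section 7 of \cite{Szni15} or the end of Section 6 of \cite{Szni17}) the coupling that realizes $\cI$, the range of the walk started at $x$, together with $\cI^u$ on a common probability space in such a way that on an event of probability tending to $1$ as $N\to\infty$ (for fixed $u$), the walk's range restricted to a box of size a fixed multiple of $N$ is contained in $\cI^u$ restricted to that same box; more precisely one uses that the expected number of interlacement trajectories of level $u$ entering, say, $B(x,8N)$ is of order $u N^{d-2}$, so that with $\IP$-probability at least $1 - c\,uN^{d-2}$ there is at least one such trajectory, and one can couple it to dominate the walk's excursion structure inside $B(x,4N)$. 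Since the events in \eqref{5.11} only depend on the configuration inside $D_N\subseteq B(x,4N)$, this domination gives, for every $u\in(0,\ov u)$ and all large $N$,
\begin{equation*}
P_x[|D_N\setminus\cC_{2N}|\ge\nu\,|D_N|]\le \IP[\cA_N]+c\,u\,N^{d-2},
\end{equation*}
hence, taking $\frac{1}{N^{d-2}}\log(\cdot)$ and $\limsup_N$ and using Theorem \ref{theo5.1},
\begin{equation*}
\limsup_N\frac{1}{N^{d-2}}\log P_x[|D_N\setminus\cC_{2N}|\ge\nu\,|D_N|]\le \max\{-\wh J_{u,\nu},\,\log(cu)\}.
\end{equation*}

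Next I would let $u\downarrow 0$. The term $\log(cu)$ tends to $-\infty$, so it is eventually dominated by $-\wh J_{u,\nu}$; it therefore suffices to show $\lim_{u\downarrow 0}\wh J_{u,\nu}=\wh J_\nu$. This is a continuity statement for the variational problem \eqref{5.3} as $u\to0$: the constraint $\strokedint_D\wh\theta((\sqrt u+\varphi)^2)\,dz\ge\nu$ converges, in the appropriate sense, to $\strokedint_D\wh\theta(\varphi^2)\,dz\ge\nu$. For the easy inequality $\limsup_{u\downarrow0}\wh J_{u,\nu}\le\wh J_\nu$ one takes a minimizer $\varphi$ for $\wh J_\nu$ (which exists by the remark after \eqref{5.3}) and uses it, or a slight dilation/enlargement of it, as a competitor for $\wh J_{u,\nu}$: since $\wh\theta$ is non-decreasing, $\wh\theta((\sqrt u+\varphi)^2)\ge\wh\theta(\varphi^2)$ pointwise, so $\varphi$ is automatically admissible for $\wh J_{u,\nu}$ and $\wh J_{u,\nu}\le\frac{1}{2d}\int|\nabla\varphi|^2=\wh J_\nu$; in fact this already gives $\wh J_{u,\nu}\le\wh J_\nu$ for every $u>0$. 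For the reverse inequality $\liminf_{u\downarrow0}\wh J_{u,\nu}\ge\wh J_\nu$ I would run the now-standard compactness argument (identical in structure to \eqref{5.7}--\eqref{5.9} and to (4.43), (4.44) of \cite{Szni21b}): pick $u_n\downarrow0$ with $\wh J_{u_n,\nu}\to\liminf_{u\downarrow0}\wh J_{u,\nu}$ and minimizers $\varphi_n$; by Theorem 8.6 and Corollary 8.7 of \cite{LiebLoss01} extract a subsequence converging in $L^2_{\rm loc}$ and a.e.\ to some $\varphi\in D^1(\IR^d)$ with $\frac{1}{2d}\int|\nabla\varphi|^2\le\liminf_\ell\frac{1}{2d}\int|\nabla\varphi_{n_\ell}|^2=\lim_n\wh J_{u_n,\nu}$; and since $\sqrt{u_{n_\ell}}+\varphi_{n_\ell}\to\varphi$ a.e.\ and $\wh\theta$ is upper semicontinuous at the relevant points (it is the $1\{\cdot\ge\ov u\}$-completed $\theta_0$, which is left-continuous below $\ov u$ and equal to $1$ above, so in any case $\wh\theta(\varphi^2)\ge\limsup_\ell\wh\theta((\sqrt{u_{n_\ell}}+\varphi_{n_\ell})^2)$ a.e.), the reverse Fatou lemma gives $\strokedint_D\wh\theta(\varphi^2)\,dz\ge\limsup_\ell\strokedint_D\wh\theta((\sqrt{u_{n_\ell}}+\varphi_{n_\ell})^2)\,dz\ge\nu$, so $\varphi$ is admissible for $\wh J_\nu$ and $\wh J_\nu\le\frac{1}{2d}\int|\nabla\varphi|^2\le\lim_n\wh J_{u_n,\nu}$. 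Combining the two inequalities yields $\lim_{u\downarrow0}\wh J_{u,\nu}=\wh J_\nu$, and plugging this into the displayed bound above finishes the proof.

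The main obstacle, and the step requiring the most care, is the coupling/domination input: one must make precise, with quantitative control on the error probability, that inside $D_N$ the range of a single walk started at $x$ can be realized as a subset of $\cI^u$ for small fixed $u$, up to an event of probability $O(uN^{d-2})$. This is exactly the content of the soft-local-time or excursion-coupling constructions already used to pass between simple random walk and interlacements (Section 7 of \cite{Szni15}); here the only subtlety is that we need it at a fixed small level $u$ rather than in a genuine scaling limit, but since both the event $\cA_N$ and the event in \eqref{5.11} are monotone (increasing in the vacant-side complement, i.e.\ decreasing in the occupied set) and depend only on the configuration in $D_N$, the one-sided domination $\text{range}\cap D_N\subseteq\cI^u\cap D_N$ on a high-probability event is all that is needed, and the rest is the routine variational continuity argument sketched above. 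Note also that $\nu\in[0,1)$ is allowed here (rather than $\nu\in[\theta_0(u),1)$) because $\wh\theta(\varphi^2)$ can be made to vanish on part of $D$ by letting $\varphi\to0$ there, so for $\nu<\theta_0(u)$ the bound \eqref{5.5} with the constraint $\strokedint_D\wh\theta((\sqrt u+\varphi)^2)\ge\nu$ still applies after possibly replacing $\nu$ by $\max(\nu,\theta_0(u))$ and then letting $u\downarrow0$ so that $\theta_0(u)\downarrow\theta_0(0)=0$; alternatively one invokes Theorem \ref{theo5.1} for $\nu'=\max(\nu,\theta_0(u))$ and passes to the limit, using $\theta_0(0^+)=0$.
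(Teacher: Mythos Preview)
Your variational-continuity argument for $\lim_{u\downarrow 0}\wh J_{u,\nu}=\wh J_\nu$ is correct and essentially identical to the paper's. The gap is in the coupling step.

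First, the bound you write down does not follow from your stated input. You claim an additive error $c\,u\,N^{d-2}$ and then deduce
\[
\limsup_N\frac{1}{N^{d-2}}\log P_x[\,\cdot\,]\le\max\{-\wh J_{u,\nu},\log(cu)\}.
\]
But $\frac{1}{N^{d-2}}\log(c\,u\,N^{d-2})\to 0$, not $\log(cu)$; so the displayed right-hand side should be $0$, which is useless. If instead you meant the (correct) Poisson tail $e^{-c\,u\,N^{d-2}}$ for ``no trajectory in $B(x,8N)$'', then the error contributes $-cu$, and as $u\downarrow 0$ this dominates $-\wh J_{u,\nu}\to -\wh J_\nu$; the bound again degenerates to $0$.

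Second, and more fundamentally, the existence of some interlacement trajectory in $B(x,8N)$ does not give you $\cI\cap B(x,4N)\subseteq\cI^u$. Such a trajectory has no reason to pass through $x$, let alone to cover the range of a walk started at $x$; ``dominating the walk's excursion structure'' from one arbitrary trajectory is not a statement with a known proof, and soft-local-time couplings do something quite different.

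The paper's coupling is both simpler and exact: one conditions on $\{x\in\cI^u\}$. Under $\IP[\,\cdot\mid x\in\cI^u]$ there is an interlacement trajectory through $x$, and the forward half from its first visit to $x$ is a simple random walk from $x$; this realizes $\cI\subseteq\cI^u$ almost surely (everywhere, not just in a box), yielding
\[
P_x[|D_N\setminus\cC_{2N}|\ge\nu|D_N|]\le \IP[\cA_N]\,/\,\IP[x\in\cI^u]=\IP[\cA_N]\,/\,(1-e^{-u/g(0,0)}).
\]
The normalizing constant is independent of $N$, so it disappears after $\frac{1}{N^{d-2}}\log(\cdot)$, and Theorem~\ref{theo5.1} gives $\limsup_N\frac{1}{N^{d-2}}\log P_x[\,\cdot\,]\le-\wh J_{u,\nu}$ directly, for any fixed $u$ with $\theta_0(u)<\nu$. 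Then your own continuity argument finishes. (For $\nu=0$ the statement is trivial; for $\nu>0$ one picks $u$ small enough that $\theta_0(u)<\nu$, which is possible since $\theta_0(0^+)=0$.) Replace your coupling paragraph by this conditioning argument and the proof goes through.
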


The existence of a minimizer for (\ref{5.12}) is established by a similar argument as in the case of $\ov{J}_{u,\nu}$ in Theorem 2 of \cite{Szni21}. Also, if as expected, the identity $\ov{u} = u_*$ holds, then $\wh{\theta}$ coincides with $\ov{\theta}_0$.

\begin{proof}
Without loss of generality, we can assume that $\nu > 0$ and choose $u \in (0,\ov{u})$ such that $\theta_0(u) < \nu$. As in Corollary 7.3 of \cite{Szni15} or Corollary 6.4 of \cite{Szni17}, one has a coupling $\ov{P}$ of $\cI^u$ under $\IP[\cdot \,|x \in \cI^u]$ and of $\cI$ under $P_x$, so that $\ov{P}$-a.s., $\cI \subseteq \cI^u$. Then $\ov{P}$-a.s., the points in $D_N$ disconnected by $\cI$ from $S_{2N}$ are also points disconnected by $\cI^u$ from $S_{2N}$, so that $\ov{P}$-a.s., $|D_N \,\backslash\, \cC^u_{2N}| \ge |D_N \,\backslash\, \cC_{2N}|$. Hence, we have
\begin{equation}\label{5.13}
\begin{split}
P_x [|D_N \,\backslash\, \cC_{2N}| \ge \nu \,| D_N|] & \le \IP [|D_N\,\backslash \, \cC^u_{2N}| \ge \nu\,|D_N|] \, / \, \IP[x \in \cI^u]
\\
& = \IP[\cA_N] \, / \, (1 - e^{-u/g(0,0)}).
\end{split}
\end{equation}

\n
As a result of Theorem \ref{theo5.1} it follows that
\begin{equation}\label{5.14}
\limsup\limits_N \; \mbox{\f $\dis\frac{1}{N^{d-2}}$} \; \log P_x [|D_N\, \backslash \,\cC_{2N}| \ge \nu\,|D_N|] \le - \wh{J}_{u,\nu}.
\end{equation}

\n
By direct inspection $\wh{J}_{u,\nu}$ is non-increasing in $u$ and $\wh{J}_{u,\nu} \le \wh{J}_{\nu}$. As we now explain
\begin{equation}\label{5.15}
\lim\limits_{u \r 0} \;\wh{J}_{u,\nu} = \wh{J}_\nu .
\end{equation}

\n
The argument is similar to the proof of (\ref{5.9}). We consider a sequence $u_n$ in $(0,\ov{u})$ decreasing to $0$ with $\theta_0(u_n) < \nu$ for all $n$, and let $\varphi_n$ be a minimizer for $\wh{J}_{u_n,\nu}$. Once again, by Theorem 8.6 and Corollary 8.7 of \cite{LiebLoss01}, we can extract a subsequence $\varphi_{n_\ell}$, $\ell \ge 1$, converging in $L^2_{{\rm loc}}(\IR^d)$ and a.e. to $\varphi$ element of $D^1(\IR^d)$ such that
\begin{equation*}
\begin{array}{l}
\hspace{-3.7ex} \mbox{\f $\dis\frac{1}{2d}$} \; \dis\int_{\IR^d} \;|\nabla \varphi |^2\, dz \le \liminf\limits_\ell \; \mbox{\f $\dis\frac{1}{2d}$} \; \dis\int_{\IR^d} |\nabla \varphi_{n_\ell}|^2 \,dz = \lim\limits_n \;\wh{J}_{u_n,\nu}, \;\mbox{and}
\\[2ex]
\dis\strokedint_D \wh{\theta} (\varphi^2) \,dz \ge \dis\strokedint_D \limsup\limits_\ell \; \wh{\theta} \,\big((\sqrt{u}_{n_\ell} + \varphi_{n_\ell})^2\big) \,dz
\\[2ex]
\qquad \quad \;\;\stackrel{\rm reverse\,Fatou}{\ge} \limsup\limits_\ell \; \dis\strokedint_D \wh{\theta} \,  \big((\sqrt{u}_{n_\ell} + \varphi_{n_\ell})^2\big) \,dz \ge \nu.
\end{array}
\end{equation*}

\n
This shows that $\wh{J}_\nu \le \lim_n \; \wh{J}_{u_n,\nu}$ and hence completes the proof of (\ref{5.15}). The claim of Corollary \ref{cor5.3} now follows by letting $u$ tend to $0$ in (\ref{5.14}).
\end{proof}

\begin{remark}\label{rem5.4} \rm It remains an open question whether a matching asymptotic lower bound for (\ref{5.11}) holds as well. The consideration of {\it tilted random walks} as in \cite{Li17} (which provides the ``right'' asymptotic lower bound for the disconnection of $D_N$ by $\cI$), and similar ideas as in Section 4 and Remark 6.6 of \cite{Szni21a} might be helpful. \hfill $\square$
\end{remark}
\appendix
\section{Appendix: Resonance sets and $I$-families}
\setcounter{equation}{0}

In this appendix we recall some results concerning resonance sets and $I$-families developed in \cite{NitzSzni20} and \cite{ChiaNitz}, and we sketch the proof of Proposition \ref{prop3.1}. 

\medskip
First some notation. For $x \in \IZ^d$ and $r \ge 0$ integer we write
\begin{equation}\label{A.1}
\begin{array}{l}
\mbox{$m_{x,r}$ for the normalized counting measure on $B(x,r) \cap \IZ^d$, and $\langle f \rangle_{B(x,r)}$}
\\
\mbox{for the integral of $f$ with respect to $m_{x,r}$}.
\end{array}
\end{equation}

\n
We consider an integer
\begin{equation}\label{A.2}
J \ge 1,
\end{equation}

\n
and (as in (\ref{4.20}) of  \cite{ChiaNitz}, with $\delta = 200 J$) a length scale $r_{\min}(J)$ such that
\begin{equation}\label{A.3}
r_{\min} (J) \ge 8 \times 200 J
\end{equation}

\n
and such that for suitable constants $\check{c}_0 (J)$, $\check{c}_1(J)$, $c'(J) \in (0,1)$, for all integers $r \ge r_{\min}$
\begin{equation}\label{A.4}
\left\{ \begin{array}{rl}
{\rm i)} & m_{x,r} (\{y; |y - z|_\infty \ge (1 - \check{c}_0 (J)) \,r\}) \le (4 \times 200 J)^{-1},
\\[2ex]
{\rm ii)} & \mbox{for all $y, y' \in B(x, (1 - \check{c}_0(J))\,r), \; q_{t,B(x,r)} (y,y') \ge c'(J) \, t^{-d/2}$}
\\[0.5ex]
&\hspace{6cm}\! \mbox{for all $t \in [\check{c}_1(J) \,r^2, r^2]$},
\end{array}\right.
\end{equation}

\n
where $q_{t,B(x,r)} (\cdot,\cdot)$ stands for the transition kernel of the simple random walk with unit jump rate killed outside $B(x,r)$.

\medskip
We assume that $N$ is large so that in the notation of (\ref{1.10})
\begin{equation}\label{A.5}
L_1 \ge r_{\min}(J) \vee 6.
\end{equation}
We then consider
\begin{equation}\label{A.6}
\mbox{$U_0$ a finite non-empty subset of $\IZ^d$, $U_1 = \IZ^d \backslash U_0$},
\end{equation}
and we define the density functions for $x \in \IZ^d$ and $m \ge 0$ integer,
\begin{equation}\label{A.7}
\left\{ \begin{array}{l}
\sigma_m(x) = m_{x,2^m L_1} (U) = \mbox{\f $\dis\frac{|B(x,2^m L_1) \cap U_1|}{|B(x,2^m L_1)|}$}\;, \; \mbox{and}
\\[1ex]
\wt{\sigma}_m(x)  = \sigma_{m+2} (x).
\end{array}\right.
\end{equation}

\n
The next two lemmas are straightforward adaptations of Lemmas 1.1 and 1.2 of \cite{NitzSzni20} in the $\IR^d$-case and of Lemmas 4.3 and 4.4 of \cite{ChiaNitz} in the discrete case. We have:
\begin{lemma}\label{lemA.1}
\begin{align}
& \mbox{For $m \ge 0$, $x,y \in \IZ^d$, $|\sigma_m(x+y) - \sigma_m(x)| \le \mbox{\f $\dis\frac{1}{2^m L_1}$} \;|y|_1$}. \label{A.8}
\\[1ex]
& \mbox{For $0 \le m' < m$, $x \in \IZ^d$, $|\sigma_m(x) - \langle \sigma_{m'}\rangle_{B(x,2^m L_1)}| \le d\, 2^{d-1} \; 2^{m' - m}$}.\label{A.9}
\end{align}
\end{lemma}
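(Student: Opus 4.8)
The two claims are elementary continuity/averaging estimates for the local density function $\sigma_m$, and I would prove each by a direct counting argument.

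\textbf{Proof of (\ref{A.8}).} The plan is to compare the two balls $B(x,2^mL_1)$ and $B(x+y,2^mL_1)$, which are translates of each other by $y$. Since these balls have the same cardinality, call it $V = |B(0,2^mL_1)|$, we may write
\[
\sigma_m(x+y) - \sigma_m(x) = \frac{1}{V}\Big(|B(x+y,2^mL_1)\cap U_1| - |B(x,2^mL_1)\cap U_1|\Big),
\]
so that the numerator is bounded in absolute value by $|B(x+y,2^mL_1)\,\triangle\, B(x,2^mL_1)|$, the cardinality of the symmetric difference. The key step is the standard fact that for a sup-norm ball $B$ of radius $r$ in $\IZ^d$ and its translate $B+y$, one has $|B\,\triangle\,(B+y)| \le \frac{|y|_1}{r}\,|B|$; this follows by writing the translation $y$ as a concatenation of $|y|_1$ unit steps, noting that each unit step $e$ changes the ball by a single ``slab'' of at most $(2r+1)^{d-1} \le |B|/r$ points (using $(2r+1)^{d-1}/(2r+1)^d = 1/(2r+1) \le 1/r$), and applying the triangle inequality for the symmetric difference over the $|y|_1$ steps. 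Dividing by $V$ gives $|\sigma_m(x+y)-\sigma_m(x)| \le |y|_1/(2^mL_1)$, as claimed.

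\textbf{Proof of (\ref{A.9}).} Here the plan is to expand $\langle \sigma_{m'}\rangle_{B(x,2^mL_1)}$ using the definition of $\sigma_{m'}$ and Fubini. Writing $r = 2^mL_1$ and $r' = 2^{m'}L_1$, and $V' = |B(0,r')|$, we have
\[
\langle \sigma_{m'}\rangle_{B(x,r)} = \frac{1}{|B(x,r)|}\sum_{z\in B(x,r)} \frac{|B(z,r')\cap U_1|}{V'} = \frac{1}{|B(x,r)|\,V'}\sum_{w\in U_1}\big|\{z\in B(x,r): |w-z|_\infty\le r'\}\big|.
\]
The inner count equals $|B(w,r')\cap B(x,r)|$, which is exactly $V'$ when $B(w,r')\subseteq B(x,r)$, i.e. when $w$ lies in the ``core'' $B(x,r-r')$, and lies between $0$ and $V'$ otherwise. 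Hence $\langle\sigma_{m'}\rangle_{B(x,r)}$ differs from $\frac{1}{|B(x,r)|}|B(x,r-r')\cap U_1|$ by at most $\frac{1}{|B(x,r)|}\,|B(x,r)\setminus B(x,r-r')|$, and the latter shell has cardinality at most $d\,2^{d-1}\,r'\,r^{d-1} \le d\,2^{d-1}\,(r'/r)\,|B(x,r)|$ (comparing $(2r+1)^d$ with $(2r-2r'+1)^d$, the difference of $d$-th powers telescopes into $d$ terms each of size $\lesssim r'\,r^{d-1}$; the constant $d\,2^{d-1}$ is the clean bound). Finally $\frac{1}{|B(x,r)|}|B(x,r-r')\cap U_1|$ differs from $\sigma_m(x) = \frac{1}{|B(x,r)|}|B(x,r)\cap U_1|$ by at most the same shell volume ratio. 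Combining the two comparisons and using $r'/r = 2^{m'-m}$ yields $|\sigma_m(x) - \langle\sigma_{m'}\rangle_{B(x,r)}| \le d\,2^{d-1}\,2^{m'-m}$.

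\textbf{Main obstacle.} Neither estimate is deep; the only care needed is in the combinatorial bookkeeping of shell volumes, i.e. getting the constant $d\,2^{d-1}$ exactly right from the discrete ball cardinalities $(2r+1)^d$ rather than the continuum volume $(2r)^d$. I would handle this by the telescoping identity $a^d - b^d = (a-b)\sum_{i=0}^{d-1}a^i b^{d-1-i}$ with $a = 2r+1$, $b = 2(r-r')+1$, bounding each of the $d$ terms by $(2r+1)^{d-1}$ and $a-b = 2r'$, which gives the shell bound $2r'\cdot d\cdot(2r+1)^{d-1} \le d\,2^{d-1}\,(r'/r)\,(2r+1)^d$ since $2(2r+1)^{d-1}r \ge (2r+1)^{d-1}(2r+1) \cdot \tfrac{2r}{2r+1}\cdot\tfrac{1}{?}$—so I would instead simply absorb the harmless $+1$ into the constant $2^{d-1}$ rather than $2^d$, which is exactly what the stated bound allows.
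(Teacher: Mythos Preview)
The paper does not give its own proof here; it simply cites Lemmas 1.1--1.2 of \cite{NitzSzni20} and Lemmas 4.3--4.4 of \cite{ChiaNitz}. Your argument for (\ref{A.8}) is fine (strictly speaking a unit shift produces \emph{two} slabs of size $(2r+1)^{d-1}$, but $2(2r+1)^{d-1}/(2r+1)^d=2/(2r+1)\le 1/r$, so the stated bound survives).

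For (\ref{A.9}) your plan has a slip and then gets tangled in the constants. The slip: when you bound $\big|\langle\sigma_{m'}\rangle_{B(x,r)} - \tfrac{1}{|B(x,r)|}|B(x,r-r')\cap U_1|\big|$, the controlling shell is $B(x,r+r')\setminus B(x,r-r')$, not $B(x,r)\setminus B(x,r-r')$; your Fubini sum over $w\in U_1$ picks up every $w$ with $B(w,r')\cap B(x,r)\neq\emptyset$, i.e.\ every $w\in B(x,r+r')$. With the correct (wider) shell your two-step comparison still works at the level of orders, but the telescoping paragraph does not cleanly produce $d\,2^{d-1}$ --- a careful run of your own bounds gives something closer to $d\,2^d$. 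A one-line route that avoids all shell bookkeeping (and is the natural reading of the cited references): swap the order of summation the other way to get
\[
\langle\sigma_{m'}\rangle_{B(x,r)}\;=\;\frac{1}{|B(0,r')|}\sum_{|v|_\infty\le r'}\sigma_m(x+v),
\]
and then apply (\ref{A.8}) together with $|v|_1\le d\,|v|_\infty\le d\,r'$. This yields $|\sigma_m(x)-\langle\sigma_{m'}\rangle_{B(x,r)}|\le d\,r'/r=d\,2^{m'-m}$, comfortably inside the stated bound $d\,2^{d-1}\,2^{m'-m}$.
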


Further, we have (and the $\delta$ below is unrelated to (\ref{2.7}) but follows the notation of \cite{NitzSzni20}, \cite{ChiaNitz}):
\begin{lemma}\label{lemA.2}
For $x \in \IZ^d$, $0 \le m' < m$, setting $\beta' = \langle\sigma_{m'}\rangle_{B(x,2^m L_1)}$, then for all $0 \le \delta \le \beta' \wedge (1- \beta')$ at least one of i) or ii) below holds true:
\begin{equation}\label{A.10}
\left\{ \begin{array}{rl}
{\rm i)} & m_{x,2^m L_1} \;(\sigma_{m'} > \beta' + \delta) \ge \mbox{\f $\dis\frac{\delta}{2}$} \; \mbox{and} \;\; m_{x,2^m L_1} \;(\sigma_{m'} < \beta' - \delta) \ge  \mbox{\f $\dis\frac{\delta}{2}$} ,
 \\[1ex]
{\rm ii)} & m_{x,2^m L_1}, \;(\beta' - \delta \le \sigma_{m'} \le \beta' + \delta) \ge  \mbox{\f $\dis\frac{1}{4}$} -  \mbox{\f $\dis\frac{\delta}{2}$} .
\end{array}\right.
\end{equation}
\end{lemma}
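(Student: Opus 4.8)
### Proof plan for Lemma A.2

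The plan is to fix $x$, $m'<m$, set $\beta'=\langle\sigma_{m'}\rangle_{B(x,2^mL_1)}$, and run a clean dichotomy on how much mass of $\sigma_{m'}$ (under the normalized counting measure $m_{x,2^mL_1}$) sits near its average. I would introduce the three sets $A_+=\{\sigma_{m'}>\beta'+\delta\}$, $A_-=\{\sigma_{m'}<\beta'-\delta\}$, and $A_0=\{\beta'-\delta\le\sigma_{m'}\le\beta'+\delta\}$, which partition $B(x,2^mL_1)$, and write $\mu=m_{x,2^mL_1}$ throughout. Since $\sigma_{m'}$ takes values in $[0,1]$ and integrates to $\beta'$, the whole lemma reduces to an elementary estimate on a $[0,1]$-valued random variable with a prescribed mean.

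The first step is to dispose of the case where ii) fails: assume $\mu(A_0)<\tfrac14-\tfrac\delta2$, so $\mu(A_+)+\mu(A_-)>\tfrac34+\tfrac\delta2$. I then want to show both $\mu(A_+)\ge\tfrac\delta2$ and $\mu(A_-)\ge\tfrac\delta2$. The natural way is by contradiction using the mean constraint $\int\sigma_{m'}\,d\mu=\beta'$. Suppose, say, $\mu(A_-)<\tfrac\delta2$. On $A_-$ one has $\sigma_{m'}\ge0$; on $A_0$ one has $\sigma_{m'}\ge\beta'-\delta$; on $A_+$ one has $\sigma_{m'}\ge\beta'+\delta$. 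Bounding $\int\sigma_{m'}\,d\mu$ from below by dropping the $A_-$ contribution and using these lower bounds elsewhere, together with $\mu(A_0)+\mu(A_+)=1-\mu(A_-)>1-\tfrac\delta2$, should force $\int\sigma_{m'}\,d\mu>\beta'$ once one uses $\delta\le\beta'\wedge(1-\beta')$ to control the sign of the relevant error terms — contradiction. The symmetric argument, using the upper bound $\sigma_{m'}\le1$ on $A_+$ and $\sigma_{m'}\le\beta'+\delta$ on $A_0$ and dropping the $A_+$ contribution from above, rules out $\mu(A_+)<\tfrac\delta2$. Hence if ii) fails then i) holds.

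The main (and really only) obstacle is making the arithmetic in that contradiction argument tight enough: the bound must close using exactly the hypothesis $0\le\delta\le\beta'\wedge(1-\beta')$ and the crude partition-mass inequality $\mu(A_+)+\mu(A_-)>\tfrac34+\tfrac\delta2$, with no slack to spare. Concretely, assuming $\mu(A_-)<\tfrac\delta2$,
\begin{equation*}
\beta'=\int\sigma_{m'}\,d\mu\ge (\beta'-\delta)\,\mu(A_0)+(\beta'+\delta)\,\mu(A_+)
=(\beta'-\delta)\bigl(1-\mu(A_-)\bigr)+2\delta\,\mu(A_+),
\end{equation*}
and then $1-\mu(A_-)>1-\tfrac\delta2$ and $\mu(A_+)>\tfrac34+\tfrac\delta2-\mu(A_-)>\tfrac34$ give, after collecting terms and using $\beta'-\delta\ge0$, a lower bound on the right-hand side strictly exceeding $\beta'$, the desired contradiction; the $A_+$ case is handled by the mirror inequality with $\sigma_{m'}\le1$ on $A_+$. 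One should double-check the edge cases $\beta'=0$ or $\beta'=1$ (then $\delta=0$ and the statement is trivial, since $A_0$ has full mass) so the division into cases is exhaustive. Once the numerics are pinned down, the proof is complete; no input beyond Lemma A.1's companion is needed, and in fact Lemma A.2 uses nothing from Lemma A.1 at all.
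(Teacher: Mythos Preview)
Your approach is correct and is the natural elementary argument. The paper itself does not supply a proof of this lemma; it merely states that Lemmas A.1 and A.2 are ``straightforward adaptations of Lemmas 1.1 and 1.2 of \cite{NitzSzni20} \dots\ and of Lemmas 4.3 and 4.4 of \cite{ChiaNitz}'', so there is nothing to compare against except the cited sources, and your dichotomy on $\mu(A_0)$ together with the mean constraint is precisely the standard argument used there.

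Two minor remarks. First, your treatment of edge cases is slightly off: you cover $\beta'\in\{0,1\}$, but the case $\delta=0$ with $0<\beta'<1$ should also be noted separately (there i) reduces to $\mu(A_\pm)\ge 0$, which is trivial), since your contradiction step uses the strict inequality $2\delta\,\mu(A_+)>\tfrac{3\delta}{2}$, which needs $\delta>0$. Second, once you pin down the arithmetic, the key identity is
\[
(\beta'-\delta)\Bigl(1-\tfrac{\delta}{2}\Bigr)+\tfrac{3\delta}{2}
=\beta'+\tfrac{\delta}{2}\,(1-\beta'+\delta)\ \ge\ \beta',
\]
and the mirror computation for $\mu(A_+)<\tfrac{\delta}{2}$ gives $\beta'<\beta'-\tfrac{\delta}{2}(\beta'+\delta)$; both yield the contradiction for $\delta>0$. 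With these details filled in, your plan is complete.
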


The next proposition corresponds to Proposition 4.5 of \cite{ChiaNitz} in our set-up (with the choice $\delta = 1/(200J)$), see also Proposition 1.3 of \cite{NitzSzni20}.
\begin{proposition}\label{propA.3}
Recall (\ref{A.5}), (\ref{A.6}). For $x \in \IZ^d$, $0 \le m' < m$, setting $\beta' = \langle \sigma_{m'} \rangle_{B(x,2^m L_1)}$, assume that $(200J)^{-1} \le \beta' \wedge (1- \beta') \wedge \frac{1}{4}$, then
\begin{equation}\label{A.11}
P_x \big[H_{\{\sigma_{m'} \in [\beta' - \frac{1}{200J}, \beta' + \frac{1}{200J}]\}} < \tau_{2^m L_1}\big] \ge c(J),
\end{equation}
where for $r \ge 0$,
\begin{equation}\label{A.12}
\tau_r = \inf \{ s \ge 0; \;|X_s - X_0|_\infty \ge r\}.
\end{equation}
\end{proposition}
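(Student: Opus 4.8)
The plan is to deduce (A.11) from two ingredients: the near-diagonal lower bound (A.4)(ii) for the killed transition kernel together with the outer-shell estimate (A.4)(i), which yield that the walk started at $x$ enters \emph{any} subset of $B(x,2^mL_1)$ of positive volume fraction before $\tau_{2^mL_1}$ with probability bounded below in terms of that fraction; and the Lipschitz bound (A.8), which forces $\sigma_{m'}$ to vary slowly along the trajectory and hence to pass through the target window whenever it crosses it. Note first that the hypothesis $0\le m'<m$ forces $m\ge 1$. Throughout, set $\rho=2^mL_1-1$, so that $T_{B(x,\rho)}=\tau_{2^mL_1}$ and $\rho\ge 2L_1-1\ge r_{\min}(J)$ by (A.5); I use (A.4) and (A.10) with the ball $B(x,\rho)$ in place of $B(x,2^mL_1)$, the one-unit change of radius affecting the constants below only by bounded factors since $2^mL_1\ge 12$.

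The \emph{hitting estimate} reads as follows: if $S\subseteq B(x,(1-\check{c}_0(J))\rho)$ satisfies $|S|/|B(x,\rho)|\ge\kappa>0$, then, since $\{X_{\rho^2}\in S,\ \rho^2<T_{B(x,\rho)}\}\subseteq\{H_S<\tau_{2^mL_1}\}$ and $q_{\rho^2,B(x,\rho)}(x,y)\ge c'(J)\rho^{-d}$ for all $y\in S$ by (A.4)(ii) applied with $t=\rho^2\in[\check{c}_1(J)\rho^2,\rho^2]$,
\[
P_x\big[H_S<\tau_{2^mL_1}\big]\ \ge\ \sum_{y\in S}q_{\rho^2,\,B(x,\rho)}(x,y)\ \ge\ c'(J)\,\rho^{-d}\,|S|\ \ge\ c(J)\,\kappa .
\]
Moreover, writing $\delta:=(200J)^{-1}$ and $A:=\{\sigma_{m'}\in[\beta'-\delta,\beta'+\delta]\}$, the bound (A.8) shows that $\sigma_{m'}$ changes by at most $(2^{m'}L_1)^{-1}\le L_1^{-1}\le(8\times 200J)^{-1}<\tfrac12\delta$ across each unit step of a nearest-neighbour path. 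Now apply Lemma A.2 with this $\delta$, which is legitimate because the hypothesis of the proposition is exactly $\delta\le\beta'\wedge(1-\beta')$. In case (ii) of (A.10), $A$ has volume fraction at least $\tfrac14-\tfrac12\delta$; subtracting the mass of the outer shell $\{|y-x|_\infty\ge(1-\check{c}_0(J))\rho\}$, which is at most $(4\times 200J)^{-1}=\tfrac14\delta$ by (A.4)(i), leaves $S:=A\cap B(x,(1-\check{c}_0(J))\rho)$ with volume fraction at least $\tfrac14-\tfrac34\delta>\tfrac15$, and the hitting estimate gives $P_x[H_A<\tau_{2^mL_1}]\ge c(J)$. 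In case (i), if $\sigma_{m'}(x)\in[\beta'-\delta,\beta'+\delta]$ there is nothing to prove; otherwise assume $\sigma_{m'}(x)>\beta'+\delta$ (the case $\sigma_{m'}(x)<\beta'-\delta$ being symmetric, with $\{\sigma_{m'}>\beta'+\delta\}$ in the role of the set below). Put $S:=\{\sigma_{m'}<\beta'-\delta\}\cap B(x,(1-\check{c}_0(J))\rho)$; by (A.10)(i) and (A.4)(i) its volume fraction is at least $\tfrac12\delta-\tfrac14\delta=\tfrac14\delta>0$, so $P_x[H_S<\tau_{2^mL_1}]\ge c(J)$. On $\{H_S<\tau_{2^mL_1}\}$, along the sites visited up to $H_S$ the value of $\sigma_{m'}$ starts strictly above $\beta'+\delta$ and ends strictly below $\beta'-\delta$ while changing by less than $\tfrac12\delta$ per step; hence the first visited site at which $\sigma_{m'}$ drops to $\le\beta'+\delta$ still has $\sigma_{m'}>\beta'+\delta-\tfrac12\delta>\beta'-\delta$, so it lies in $A$ and is reached strictly before $S$. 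Therefore $H_A\le H_S<\tau_{2^mL_1}$, and again $P_x[H_A<\tau_{2^mL_1}]\ge c(J)$. Since Lemma A.2 asserts that (i) or (ii) holds, (A.11) follows with $c(J)$ the smallest constant produced.

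The one step I expect to require genuine care is the discrete intermediate-value argument in case (i): one must check that the per-step oscillation of $\sigma_{m'}$, controlled by (A.8) by $(2^{m'}L_1)^{-1}$, is strictly smaller than the half-width $\delta$ of the window $[\beta'-\delta,\beta'+\delta]$, which is precisely where the lower bound $L_1\ge r_{\min}(J)\ge 8\times 200J$ coming from (A.3), (A.5) enters; it guarantees that $\sigma_{m'}$ cannot ``jump over'' the window along a nearest-neighbour path. Everything else is routine: the hitting estimate is the standard heat-kernel-in-a-ball bound, with (A.4)(i) invoked only to keep the target set a definite distance inside $B(x,\rho)$ rather than letting it concentrate near the boundary, so that (A.4)(ii) applies to it.
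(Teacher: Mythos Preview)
Your argument is correct and is essentially the one in the cited references \cite{ChiaNitz}, \cite{NitzSzni20} (the paper itself gives no proof of Proposition~A.3, merely referring to Proposition~4.5 of \cite{ChiaNitz} and Proposition~1.3 of \cite{NitzSzni20}): one invokes the dichotomy of Lemma~A.2, handles case (ii) by the killed heat-kernel lower bound (A.4)(ii), and handles case (i) by first hitting the set where $\sigma_{m'}$ lies on the far side of the window and then using the Lipschitz control (A.8) as a discrete intermediate-value theorem. One small point: your justification ``since $2^mL_1\ge 12$'' for the harmless one-unit change of radius is slightly too weak in case (i), where the target volume fraction is only of order $\delta=(200J)^{-1}$; you actually need (and have, via $m\ge 1$ and (A.3), (A.5)) that $2^mL_1\ge 2L_1\ge 2r_{\min}(J)\ge 16\times 200J$, so that the boundary shell $B(x,2^mL_1)\setminus B(x,\rho)$ has relative volume $O(d/(2^mL_1))$ well below $\delta/4$.
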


We then define (as in (1.27) of \cite{NitzSzni20} or (4.26) of \cite{ChiaNitz})
\begin{equation}\label{A.13}
L(J) = \min \{L \ge 5; \,d \,2^{d-1} \, 2^{-L} \le (200 J)^{-1} \}.
\end{equation}

\n
We look at well-separated spatial scales $2^{m_0} L_1 >  2^{m_1} L_1 > \dots > 2^{m_J} L_1$, where
\begin{equation}\label{A.14}
m_j \ge L(J) + m_{j+1}, \; \mbox{for $0 \le j < J$},
\end{equation}

\n
and consider the increasing sequence of intervals (see (4.28) of  \cite{ChiaNitz}):
\begin{equation}\label{A.15}
I_j = \Big[ \mbox{\f $\dis\frac{1}{2}$}  - \mbox{\f $\dis\frac{j+1}{100J}$} , \;  \mbox{\f $\dis\frac{1}{2}$}  +  \mbox{\f $\dis\frac{j+1}{100J}$} \Big], \; 0 \le j \le J,
\end{equation}

\n
together with the non-decreasing sequence of stopping times
\begin{equation}\label{A.16}
\mbox{$\gamma_0 = H_{\{\sigma_{m_0} \in I_0\}}$,  and $\gamma_{j+1} = \gamma_j  + H_{\{\sigma_{m_{j+1}} \in I_{j+1}\}} \circ \theta_{\gamma_j}$,  for $0 \le j < J$}
\end{equation}

\n
(we refer to the beginning of Section 1 for notation).

\medskip
The next proposition corresponds to Proposition 4.6 of  \cite{ChiaNitz} in the set-up of random walks among random conductances, and to Proposition 1.4 of  \cite{NitzSzni20} in the Brownian case.

\begin{proposition}\label{propA.4}
Recall (\ref{A.5}), (\ref{A.6}). Assume that $J \ge 1$ and that the non-negative integers $m_j$, $0 \le j \le J$, satisfy (\ref{A.14}). Let $\cE$ in the notation of (\ref{A.16}), (\ref{A.12}) denote the event
\begin{equation}\label{A.17}
\cE = \{\gamma_0 = 0\} \cap \bigcap\limits_{0 \le j < J} \{\gamma_{j+1} < \gamma_j + \tau_{2^{m_j} L_1} \circ \theta_{\gamma_j}\}.
\end{equation}

\n
Then, for any $x \in \IZ^d$ such that
\begin{equation}\label{A.18}
\sigma_{m_0} (x) \in \Big[ \mbox{\f $\dis\frac{1}{2}$} -  \mbox{\f $\dis\frac{1}{L_1}$}, \;  \mbox{\f $\dis\frac{1}{2}$} +  \mbox{\f $\dis\frac{1}{L_1}$}\Big],
\end{equation}
one has
\begin{equation}\label{A.19}
P_x[\cE] \ge \check{c}_2(J).
\end{equation}
In addition, on the event $\cE$, we have (see (\ref{A.7}) for notation)
\begin{align}
&\sup\{|X_s - X_{\gamma_j}|_\infty; \; \gamma_j \le s \le \gamma_J\} \le \mbox{\f $\dis\frac{3}{2}$} \;2^{m_j} L_1, \; \mbox{for $0 \le j < J$, and} \label{A.20}
\\[1ex]
&\wt{\sigma}_{m_j} (X_{\gamma_J}) \in [\wt{\alpha}, 1 - \wt{\alpha}], \; \mbox{for all $0 \le j \le J$, where $\wt{\alpha} =  \mbox{\f $\dis\frac{1}{3}$} \;4^{-d}$}. \label{A.21}
\end{align}
\end{proposition}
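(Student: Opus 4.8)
\medskip
\noindent
\textit{Proof proposal.} The plan is to follow the scheme of Proposition~4.6 of \cite{ChiaNitz} (and of Proposition~1.4 of \cite{NitzSzni20}): the lower bound (\ref{A.19}) is obtained by chaining Proposition~\ref{propA.3} exactly $J$ times via the strong Markov property, while (\ref{A.20}) and (\ref{A.21}) are deterministic consequences of the definition of the event $\cE$ together with the scale separation (\ref{A.14}) and the Lipschitz bound (\ref{A.8}).

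For (\ref{A.19}), I would first note that since $L_1 \ge r_{\min}(J) \ge 1600\,J$ (see (\ref{A.3}), (\ref{A.5})), the assumption (\ref{A.18}) forces $\sigma_{m_0}(x) \in I_0$, so $\gamma_0 = 0$, $P_x$-a.s., and $\{\gamma_0 = 0\}$ has full $P_x$-measure. Then I would induct on $j$: assume $\gamma_j < \infty$ and $\sigma_{m_j}(X_{\gamma_j}) \in I_j$ (true for $j=0$, and propagated below). Put $\beta'_j = \langle \sigma_{m_{j+1}} \rangle_{B(X_{\gamma_j},\, 2^{m_j} L_1)}$. By (\ref{A.9}) and the choice (\ref{A.13}) of $L(J)$, one has $|\sigma_{m_j}(X_{\gamma_j}) - \beta'_j| \le d\,2^{d-1}\, 2^{-L(J)} \le (200J)^{-1}$; since $I_j \subseteq [\tfrac12 - \tfrac{1}{50},\, \tfrac12 + \tfrac{1}{50}]$ for $0 \le j \le J$, this simultaneously gives that the hypothesis $(200J)^{-1} \le \beta'_j \wedge (1-\beta'_j) \wedge \tfrac14$ of Proposition~\ref{propA.3} holds, and that $|\beta'_j - \tfrac12| \le \tfrac{j+1}{100J} + \tfrac{1}{200J} = \tfrac{2j+3}{200J}$, whence $[\beta'_j - \tfrac{1}{200J},\, \beta'_j + \tfrac{1}{200J}] \subseteq I_{j+1}$. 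Applying Proposition~\ref{propA.3} at $X_{\gamma_j}$ with $m = m_j$, $m' = m_{j+1}$ and then the strong Markov property at $\gamma_j$, the walk hits $\{\sigma_{m_{j+1}} \in I_{j+1}\}$ before time $\gamma_j + \tau_{2^{m_j} L_1}\circ\theta_{\gamma_j}$ with conditional probability at least $c(J)$; on that event $\gamma_{j+1} < \gamma_j + \tau_{2^{m_j} L_1}\circ\theta_{\gamma_j}$ and $\sigma_{m_{j+1}}(X_{\gamma_{j+1}}) \in I_{j+1}$, so the induction continues. Multiplying the $J$ conditional probabilities yields (\ref{A.19}) with $\check{c}_2(J) = c(J)^J \in (0,1)$.

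On $\cE$, the walk between times $\gamma_i$ and $\gamma_{i+1}$ stays within $|\cdot|_\infty$-distance $2^{m_i} L_1$ of $X_{\gamma_i}$; telescoping and using $m_{i+1} \le m_i - L(J) \le m_i - 5$ from (\ref{A.14}), (\ref{A.13}) gives $|X_s - X_{\gamma_j}|_\infty \le 2^{m_j} L_1 \sum_{n \ge 0} 2^{-n L(J)} \le \tfrac{32}{31}\, 2^{m_j} L_1 < \tfrac32\, 2^{m_j} L_1$ for all $\gamma_j \le s \le \gamma_J$, which is (\ref{A.20}) (and trivially $|X_{\gamma_J} - X_{\gamma_j}|_\infty \le \tfrac32\, 2^{m_j} L_1$ holds for $j = J$ as well). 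For (\ref{A.21}), fix $0 \le j \le J$; then $\sigma_{m_j}(X_{\gamma_j}) \in I_j \subseteq [\tfrac12 - \tfrac{1}{50},\, \tfrac12 + \tfrac{1}{50}]$, and since $\tfrac32\, 2^{m_j} L_1 + 2^{m_j} L_1 \le 4 \cdot 2^{m_j} L_1$, the ball $B(X_{\gamma_j},\, 2^{m_j} L_1)$ is contained in $B(X_{\gamma_J},\, 2^{m_j+2} L_1)$. A discrete sup-norm ball of integer radius $r$ has $(2r+1)^d$ points and $8r+1 \le 4(2r+1)$, so $|B(0,r)|/|B(0,4r)| \ge 4^{-d}$; hence each of $|B(X_{\gamma_J},\, 2^{m_j+2} L_1) \cap U_1|$ and $|B(X_{\gamma_J},\, 2^{m_j+2} L_1) \cap U_0|$ is at least $(\tfrac12 - \tfrac{1}{50})\,|B(X_{\gamma_j},\, 2^{m_j} L_1)| \ge (\tfrac12 - \tfrac{1}{50})\,4^{-d}\,|B(X_{\gamma_J},\, 2^{m_j+2} L_1)| \ge \tfrac13\, 4^{-d}\,|B(X_{\gamma_J},\, 2^{m_j+2} L_1)|$, which is precisely $\wt{\sigma}_{m_j}(X_{\gamma_J}) \in [\wt{\alpha},\, 1-\wt{\alpha}]$.

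Given that Proposition~\ref{propA.3} (together with Lemmas~\ref{lemA.1}, \ref{lemA.2}) is already available, I do not expect a substantial obstacle here; the only delicate point is the calibration of constants, i.e.\ checking that the interval half-widths $\tfrac{j+1}{100J}$, the resonance tolerance $\tfrac{1}{200J}$ supplied by Proposition~\ref{propA.3}, the averaging error $d\,2^{d-1} 2^{-L(J)}$ of (\ref{A.9}), and the scale gap $L(J)$ are all tuned so that Proposition~\ref{propA.3} can be iterated exactly $J$ times while the densities $\sigma_{m_j}$ stay inside the growing intervals $I_j$ and, after passing to the enlarged radius $2^{m_j+2} L_1$, inside $[\wt{\alpha},\, 1-\wt{\alpha}]$ at every scale.
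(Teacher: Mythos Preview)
Your proposal is correct and follows precisely the approach the paper points to: the paper does not give its own proof of this proposition but states that it ``corresponds to Proposition~4.6 of \cite{ChiaNitz}'' and to Proposition~1.4 of \cite{NitzSzni20}, and your argument---iterating Proposition~\ref{propA.3} $J$ times via the strong Markov property for (\ref{A.19}), then deriving (\ref{A.20}) by telescoping the displacement bounds and (\ref{A.21}) via the ball-inclusion and volume-ratio argument---is exactly the scheme of those references. The constant calibrations you flag (in particular $1/L_1 \le (200J)^{-1}$, the nesting $[\beta'_j - (200J)^{-1},\, \beta'_j + (200J)^{-1}] \subseteq I_{j+1}$, and $(\tfrac12 - \tfrac{1}{50}) \ge \tfrac13$) all check out.
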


Let us mention that the constraint on $x$ stated in (\ref{A.18}) is more restrictive (i.e.~less general) than the constraint $\sigma_{m_0} (x) \in [\frac{1}{2} - r_{\min}(J)^{-1}$, $\frac{1}{2} + r_{\min} (J)^{-1}]$ corresponding to Proposition 4.6 of  \cite{ChiaNitz}, but more convenient in the present context.

\medskip
We then turn to the crucial notion of {\it $I$-family} that will provide the tool which permits to bound the probability that the walk avoids the resonance set, when starting at a point around which the local density of $\cU_0$ is at least $\frac{1}{2}$ on many well-separated scales $2^m L_1$.

\medskip
Similar to (\ref{2.11}) of \cite{NitzSzni20} and (A.2) of \cite{ChiaNitz}, we consider
\begin{equation}\label{A.22}
\mbox{$J \ge 1$, $I \ge 1$, as well as $L \ge L(J)$ and an integer $h \ge 0$}.
\end{equation}

\n
An $I$-family consists of stopping times $S_i$, $0 \le i \le I$, of a random finite subset $\cL$ of $(J+1) \,L \, \IN_* + h$ (with $\IN_* = \{1,2, \dots\})$, and of integer-valued random variables $\wh{m}_i$, $1 \le i \le I$, so that (see the beginning of Section 1 for notation)
\begin{equation}\label{A.23}
\left\{ \begin{array}{rl}
{\rm i)} & \mbox{$0 \le S_0 \le S_1 \le \dots \le S_I$ are $P_0$-a.s. finite $(\cF_t)$-stopping times},
\\[1ex]
{\rm ii)} & \mbox{$\cL$ is an $\cF_{S_0}$-measurable finite subset of $(J+1) \,L \,\IN_* + h$ with $| \cL | \ge I$},
\\[1ex]
{\rm iii)} & \mbox{$\wh{m}_i$, $1 \le i \le I$, are $\cF_{S_i}$-measurable, pairwise distinct, and $\cL$-valued},
\\[1ex]
{\rm iv)} & \mbox{$P_0$-a.s., $\sigma_{\wh{m}_i} (X_{S_i}) \in \Big[\mbox{\f $\dis\frac{1}{2}$}  - \mbox{\f $\dis\frac{1}{L_1}$} , \; \mbox{\f $\dis\frac{1}{2}$}  + \mbox{\f $\dis\frac{1}{L_1}$} \Big]$, for $ 1 \le i \le I$}.
\end{array}\right.
\end{equation}

\n
Together with (\ref{A.22}), we assume that the local density of $\cU_0$ at the origin is at least $\frac{1}{2}$ for a set of scales of the form $2^m L_1$, where $m$ runs over $\cA$ subset of $(J+1) \,L\,\IN_* + h$, with $|\cA | = I$. More precisely,
\begin{equation}\label{A.24}
\begin{array}{l}
\mbox{there is a set $\cA \subseteq (J+1) \,L\,\IN_* + h$ such that $|\cA | = I$, and}
\\
\mbox{$\sigma_m(0) \le \fr$ for all $m \in \cA$}.
\end{array}
\end{equation}

\n
Note that at each jump of the walk the functions $\sigma_m(\cdot)$ vary at most by an amount $(2^m L_1)^{-1}$, see (\ref{A.8}). Since the walk is transient and $U_0$ finite, $P_0$-a.s., $\sigma_m (X_s)$ is equal to $1$ for large $s$, and hence $P_0$-a.s. all the functions $\sigma_m(X_s)$, $s \ge 0$, $m \in \cA$, visit the interval $[\frac{1}{2} - \frac{1}{L_1}$, $\frac{1}{2} + \frac{1}{L_1}]$ at some point. As we now record, under (\ref{A.22}), (\ref{A.24}), there is at least one $I$-family in the sense of (\ref{A.23}), corresponding to the choice (see also (2.12) of \cite{NitzSzni20}):
\begin{equation}\label{A.25}
\left\{ \begin{split}
\cL & = \cA,
\\
S_0 & = 0,
\\
S_1 & = \;\inf\Big\{s \ge 0; \; \sigma_m(X_s) \in \Big[\mbox{\f $\dis\frac{1}{2}$}  - \mbox{\f $\dis\frac{1}{L_1}$} , \; \mbox{\f $\dis\frac{1}{2}$}  + \mbox{\f $\dis\frac{1}{L_1}$} \Big], \; \mbox{for some $m \in \cL\Big\}$},
\\
\wh{m}_1 & = \left\{ \begin{array}{l} 
\min \Big\{ m \in \cL; \; \sigma_m (X_{S_1}) \in \Big[\mbox{\f $\dis\frac{1}{2}$}  - \mbox{\f $\dis\frac{1}{L_1}$} , \; \mbox{\f $\dis\frac{1}{2}$}  + \mbox{\f $\dis\frac{1}{L_1}$} \Big]\Big\}, \; \mbox{if $S_1 < \infty$},
\\
\min \{ m \in \cL\}, \; \mbox{if $S_1 = \infty$},
\end{array}\right.
\\[1ex]
S_2 & = \;\inf \Big\{s \ge S_1; \; \sigma_m(X_s) \in \Big[\mbox{\f $\dis\frac{1}{2}$}  - \mbox{\f $\dis\frac{1}{L_1}$} , \; \mbox{\f $\dis\frac{1}{2}$}  + \mbox{\f $\dis\frac{1}{L_1}$} \Big], \; \mbox{for some $m \in \cL \,\backslash \,\{\wh{m}_1\}\Big\}$},
\\
\wh{m}_2 & = \left\{ \begin{array}{l} 
\min \Big\{ m \in \cL\,\backslash \,\{\wh{m}_1\};\; \sigma_m (X_{S_2}) \in \Big[\mbox{\f $\dis\frac{1}{2}$}  - \mbox{\f $\dis\frac{1}{L_1}$} , \; \mbox{\f $\dis\frac{1}{2}$}  + \mbox{\f $\dis\frac{1}{L_1}$} \Big]\Big\}, \; \mbox{if $S_2 < \infty$},
\\
\min \big\{ m \in \cL \backslash \{\wh{m}_1\}\big\}, \; \mbox{if $S_2 = \infty$},
\end{array}\right.
\\
\vdots
\\
S_I& =  \;\inf\Big\{s \ge S_{I - 1}; \; \sigma_m(X_s) \in \Big[\mbox{\f $\dis\frac{1}{2}$}  - \mbox{\f $\dis\frac{1}{L_1}$} , \; \mbox{\f $\dis\frac{1}{2}$}  + \mbox{\f $\dis\frac{1}{L_1}$} \Big], \;\mbox{for some}
\\
&\quad \;  m \in \cL  \, \backslash\,\{\wh{m}_1, \dots, \wh{m}_{I-1}\}  \Big\},
\\
\wh{m}_I & = \;\mbox{the unique element of $\cL\, \backslash \,\{\wh{m}_1, \wh{m}_2, \dots , \wh{m}_{I-1}\}$ (also when $S_I$ is finite)}.
\end{split}\right.
\end{equation}

\n
(The formulas for $\wh{m}_i$ when $S_i = \infty$ are merely there for completeness: they pertain to $P_0$-negligible events, as explained above (\ref{A.25}).)

\medskip
Given an $I$-family as in (\ref{A.23}), we also define the $P_0$-a.s. finite stopping times
\begin{equation}\label{A.26}
T_i = \inf\{s \ge S_i; \;|X_s - X_{S_i}|_\infty \ge 2\,2^{\wh{m}_i} L_1\}, \; 1 \le i \le I,
\end{equation}

\n
and the ``intermediate labels'' and the ``labels''
\begin{equation}\label{A.27}
\mbox{$\cL_{\rm int} = \{m - j\, L$; $m \in \cL$, $1 \le j \le J\}$ and $\cL_* = \cL \cup \cL_{\rm int}$ ($\subseteq L \, \IN + h$)}.
\end{equation}

\n
For $1 \le k \le J$ we can now bring into play the {\it $(\cL_*,k)$-resonance set}
\begin{equation}\label{A.28}
\begin{array}{l}
Res_{\cL_*,k} = \big\{x \in \IZ^d; \; \dsl_{m \in \cL_*} 1\{\wt{\sigma}_m(x) \in [\wt{\alpha}, 1 - \wt{\alpha}]\} \ge k\big\} 
\\
\mbox{(see (\ref{A.7}), (\ref{A.21}) for notation)},
\end{array}
\end{equation}
and the quantity
\begin{equation}\label{A.29}
\Gamma_k^{(J)}(I) = \sup P_0[\inf\{s \ge S_0; \; X_s \in Res_{\cL_*,k}\} > \max\limits_{1 \le i \le I} T_i], \; 1 \le k \le J, \, I \ge 1,
\end{equation}

\n
where the supremum is taken over the collection of $I$-families (a non-empty collection by (\ref{A.25})). One also sets by convention
\begin{equation}\label{A.30}
\Gamma_k^{(J)} (I) = 1, \; \mbox{when $1 \le k \le J$ and $I \le 0$}.
\end{equation}

\n
Incidentally, note that in the case of the $I$-family corresponding to (\ref{A.25}), one has:
\begin{align}
& \mbox{$\cL_* = \cA_*$ where $\cA_* = \{m - j\, L$; $m \in \cA$, $0 \le j \le J\}$, and} \label{A.31}
\\[1ex]
& Res_{\cL_*,k} = \big\{x \in \IZ^d; \,\dsl_{m \in \cA_*} 1\{\wt{\sigma}_m(x) \in [\wt{\alpha}, 1 - \wt{\alpha}]\} \ge k\big\}, \label{A.32}
\end{align}

\n
as well as the bound on the probability that the walk starting at the origin avoids $Res_{\cL_*,k}$:
\begin{equation}\label{A.33}
\begin{array}{l}
\mbox{$P_0[$for all $s \ge 0$}; \dsl_{m \in \cA_*} 1\{\wt{\sigma}_m (X_s) \in [\wt{\alpha}, 1 - \wt{\alpha}]\} < k] \le  
\\
P_0    \big[ \inf\{ s \ge S_0; \; X_s \in  Res_{\cL_*,k} \} > \max\limits_{1 \le i \le I} T_i\big] \le \Gamma^{(J)}_k (I).
\end{array}
\end{equation}

\n
The interest of the quantities $\Gamma_k^{(J)} (I)$ stems from a basic recursive inequality, which they satisfy, see Lemma 2.2 of \cite{NitzSzni20} and Lemma A.1 of \cite{ChiaNitz}. Namely, one has (with $\check{c}_2(J)$ from (\ref{A.19})):
\begin{lemma}\label{lemA.5}
\begin{equation}\label{A.34}
\Gamma_1^{(J)} (I) = 0, \; \mbox{for all $I \ge 1$},
\end{equation}

\n
and for $1 \le k \le J$, $I \ge 1$, $\Delta = [\sqrt{I}]$, one has (with the convention (\ref{A.30}))
\begin{equation}\label{A.35}
\Gamma_{k+1}^{(J)} (I) \le \big(1 - \check{c}_2(J)\big)^{\sqrt{I}-1} + I^{1 + \frac{k-1}{2}} \Gamma_k^{(J)} (\Delta - k +1).
\end{equation}
\end{lemma}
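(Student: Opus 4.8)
The plan is to reproduce, in the present discrete set-up with the conventions (\ref{A.1})--(\ref{A.33}), the argument of Lemma 2.2 of \cite{NitzSzni20} and Lemma A.1 of \cite{ChiaNitz}: throughout, the ``budget'' $I$ counts the available scales, and one combines the strong Markov property at the stopping times of an $I$-family with Propositions \ref{propA.3} and \ref{propA.4}.

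The identity (\ref{A.34}), $\Gamma_1^{(J)}(I)=0$ for $I\ge 1$, is the easy part and I would prove it directly. Given any $I$-family, property (\ref{A.23}) iv) gives $\sigma_{\wh{m}_1}(X_{S_1})\in[\frac12-\frac1{L_1},\frac12+\frac1{L_1}]$. Comparing the sup-norm ball $B(X_{S_1},2^{\wh{m}_1}L_1)$ with the four times larger ball $B(X_{S_1},2^{\wh{m}_1+2}L_1)$ and using that $|B(x,4r)|<4^d|B(x,r)|$, one obtains
\[
\frac{\sigma_{\wh{m}_1}(X_{S_1})}{4^d}<\sigma_{\wh{m}_1+2}(X_{S_1})=\wt{\sigma}_{\wh{m}_1}(X_{S_1})<1-\frac{1-\sigma_{\wh{m}_1}(X_{S_1})}{4^d},
\]
whence, since $L_1\ge 6$ by (\ref{A.5}) and $\wt{\alpha}=\frac13 4^{-d}$, $\wt{\sigma}_{\wh{m}_1}(X_{S_1})\in[\wt{\alpha},1-\wt{\alpha}]$. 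As $\wh{m}_1\in\cL\subseteq\cL_*$, this already puts $X_{S_1}$ in $Res_{\cL_*,1}$, and since $S_0\le S_1\le T_1\le\max_{1\le i\le I}T_i$, the probability in (\ref{A.29}) with $k=1$ vanishes, giving (\ref{A.34}).

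For the recursion (\ref{A.35}), fix $1\le k\le J$, $I\ge1$, $\Delta=[\sqrt I]$, and an $I$-family. I would use the first $\Delta$ indices $\wh{m}_1,\dots,\wh{m}_\Delta$ as ``top scales'' for successive one-level descent attempts. At time $S_j$ ($j\le\Delta$), since $\sigma_{\wh{m}_j}(X_{S_j})\approx\frac12$ forces $\langle\sigma_{\wh{m}_j-L}\rangle_{B(X_{S_j},2^{\wh{m}_j}L_1)}\approx\frac12$ by (\ref{A.9}) and $L\ge L(J)$, Proposition \ref{propA.3} (equivalently, the first step of the construction underlying Proposition \ref{propA.4}) allows the walk, with probability at least $\check{c}_2(J)$ conditionally on $\cF_{S_j}$, to reach before exiting $B(X_{S_j},2\cdot 2^{\wh{m}_j}L_1)$ (hence before $T_j$) a point where $\sigma_{\wh{m}_j-L}(\cdot)\in[\frac12-\frac1{L_1},\frac12+\frac1{L_1}]$. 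On the event that the first $\Delta-1$ such attempts all fail one gets the first term $(1-\check{c}_2(J))^{\sqrt I-1}$ in (\ref{A.35}). On the complementary event, letting $j_0$ be the first successful attempt, the walk sits at a point $Y$ with $\sigma_{\wh{m}_{j_0}-L}(Y)\approx\frac12$, having ``banked'' one resonance at the label $\wh{m}_{j_0}\in\cL_*$ (by the same ball-comparison as above, using that the walk moved at most $2\cdot 2^{\wh{m}_{j_0}}L_1\le 2^{\wh{m}_{j_0}+2}L_1$, so $B(Y,2^{\wh{m}_{j_0}+2}L_1)$ still contains $B(X_{S_{j_0}},2^{\wh{m}_{j_0}}L_1)$); this resonance persists as long as later motion stays within $\sim 2^{\wh{m}_{j_0}}L_1$ of $Y$, which is guaranteed by the no-escape bound (\ref{A.20}) for the remaining, strictly smaller-scale descents. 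From $Y$ one then builds a fresh family on the unused indices, of size $\Delta-k+1$, so that missing $Res_{\cL_*,k}$ from $Y$ has probability at most $\Gamma_k^{(J)}(\Delta-k+1)$; summing over the finitely many ``descent configurations'' — which attempt succeeded, at which scale, and the coarse location of its endpoint, a count bounded (with room to spare, accumulating through the recursion) by $I^{1+(k-1)/2}$ — yields the second term and hence (\ref{A.35}).

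The main obstacle is the inductive bookkeeping in the last step: checking that the object built at the endpoint $Y$ of the first successful descent is genuinely an $(\Delta-k+1)$-family in the sense of (\ref{A.23}) (in particular property iv), which is why the descent must be steered into the tight window $[\frac12-\frac1{L_1},\frac12+\frac1{L_1}]$ rather than the looser one coming straight out of Proposition \ref{propA.3}); verifying that the resonances banked at the larger scales survive all subsequent descents via (\ref{A.20}) and the Lipschitz estimate (\ref{A.8}); keeping the index count exactly $\Delta-k+1$; and bounding the number of descent configurations by $I^{1+(k-1)/2}$ in the union bound — all of which is the discrete transcription of the corresponding computations in \cite{NitzSzni20} and \cite{ChiaNitz}.
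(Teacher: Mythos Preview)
Your proof of (\ref{A.34}) is correct and essentially the paper's.

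For (\ref{A.35}) there is a genuine gap: your decomposition is not the paper's and, as described, does not work. The issue is the conditional independence of the $\Delta$ ``one-level descent'' attempts launched at $S_1,\dots,S_\Delta$. The $j$-th attempt runs on $[S_j,T_j]$, and nothing forces $T_j\le S_{j+1}$; the intervals may overlap, so the strong Markov property does not deliver the product $(1-\check{c}_2(J))^{\sqrt I-1}$. Your ``success'' branch is also mismatched: after a first successful attempt at some $j_0\le\Delta$ the unused labels $\wh{m}_{j_0+1},\dots,\wh{m}_I$ number $I-j_0\ge I-\Delta$, not $\Delta-k+1$, so neither the size of the sub-family nor the combinatorial prefactor $I^{1+(k-1)/2}$ is accounted for by your scheme.

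The paper's decomposition (see (\ref{A.36}) and the references to \cite{NitzSzni20}, \cite{ChiaNitz} given there) is instead on the event $\{T_i<S_{i+\Delta}\ \mbox{for all}\ 1\le i\le I-\Delta\}$. On that event the \emph{full} $J$-step descent of Proposition~\ref{propA.4} (not a single step of Proposition~\ref{propA.3}) is launched at each of the times $S_1,S_{1+\Delta},\dots,S_{1+m_\Delta\Delta}$, with $m_\Delta=[(I-1)/\Delta]$; by (\ref{A.20}) each descent terminates before $T_{1+j\Delta}<S_{1+(j+1)\Delta}$, so these roughly $\sqrt I$ attempts are genuinely successive and the strong Markov property applies. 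By (\ref{A.19}) and (\ref{A.21}) each attempt reaches, with conditional probability at least $\check{c}_2(J)$, a point carrying $J+1\ge k+1$ resonant labels in $\cL_*$, whence $a_1\le(1-\check{c}_2(J))^{\sqrt I-1}$. On the complement some $i$ has $T_i\ge S_{i+\Delta}$, so the $\Delta+1$ stopping times $S_i,\dots,S_{i+\Delta}$ all occur before the walk has moved $2\cdot 2^{\wh{m}_i}L_1$ from $X_{S_i}$; it is from this \emph{cluster of close stopping times} (not from a descent endpoint) that one extracts a sub-family of size $\Delta-k+1$, and the factor $I^{1+(k-1)/2}$ is the union bound over the position of the cluster and the labels discarded from it.
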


\medskip\n
{\it Sketch of Proof:} We begin with (\ref{A.34}). Note that for any $I$-family by (\ref{A.23}) iv), $|\sigma_{\wh{m}_1} (X_{S_1}) - \frac{1}{2} \;| \le \frac{1}{L_1} \stackrel{(\ref{A.5})}{\le} \frac{1}{6}$, so that $\wt{\sigma}_{\wh{m}_1} (X_{S_1}) \in [\wt{\alpha}, 1 - \wt{\alpha}]$, $P_0$-a.s.. Hence $X_{S_1} \in Res_{\cL_*,1}$, $P_0$-a.s., and therefore $P_0$-a.s., $\inf\{s \ge S_0$; $X_s \in Res_{\cL_*,1}\} \le S_1 \le \max_{1 \le i \le I} T_i$. This shows that the probability in (\ref{A.29}) vanishes for all $I$-families and (\ref{A.34}) follows.

\medskip
We turn to (\ref{A.35}). For the induction step we set $m_\Delta = [\frac{I-1}{\Delta}]$, so that $i_\Delta \stackrel{\rm def}{=} 1 + m_\Delta \, \Delta \le I  < 1 + (m_\Delta + 1) \, \Delta$. For $I = 1$, the inequality (\ref{A.35}) is immediate since the first term in the right member equals $1$. For $I \ge 2$ and an $I$-family we have
\begin{equation}\label{A.36}
\begin{array}{l}
P_0\big[ \inf\{s \ge S_0; \; X_s \in  Res_{\cL_*,k} \} > \max\limits_{1 \le i \le I} T_i\big] \le a_1 + a_2, \; \mbox{where}
\\
\mbox{$a_1 = P_0 \big[T_i < S_{i+ \Delta}$, for all $1 \le i \le I - \Delta$},
\\
\hspace{3.3cm} \mbox{$\inf\{ s \ge S_0$; $X_s \in Res_{\cL_*,k+1}\} > \max_{1 \le i \le I} T_i\big]$, and}
\\[1ex]
\mbox{$a_2 = P_0 \big[T_i \ge S_{i+ \Delta}$, for some $1 \le i \le I - \Delta$},
\\
\hspace{3.3cm} \mbox{$\inf\{ s \ge S_0$; $X_s \in Res_{\cL_*,k+1}\} > \max_{1 \le i \le I} T_i\big]$}
\end{array}
\end{equation}
(one has $I - \Delta \ge 1$, see (\ref{2.23}) of \cite{NitzSzni20}).

\medskip
To bound $a_1$, one applies the strong Markov property at the times $S_{1 + m_\Delta \Delta}$, \linebreak $S_{1 + (m_\Delta - 1) \Delta}, \dots, S_1$ and uses (\ref{A.19}), (\ref{A.21}) (see (A.13), (A.14) of \cite{ChiaNitz} or (2.25) - (2.28) of \cite{NitzSzni20}). This yields
\begin{equation}\label{A.37}
a_1 \le \big(1 - \check{c}_2 (J)\big)^{m_\Delta + 1} \le  \big(1 - \check{c}_2 (J)\big)^{\sqrt{I} - 1}.
\end{equation}

\n
On the other hand, as in (2.29), (2.33) of \cite{NitzSzni20}, one has
\begin{equation}\label{A.38}
a_2 \le I^{1 + \frac{k-1}{2}} \Gamma_k^{(J)} (\Delta - k+ 1).
\end{equation}

\n
Adding the two bounds and taking the supremum over $I$-families, the claim (\ref{A.35}) follows. 

\hfill $\square$

\medskip
We then introduce the $\gamma^{(J)}_{k,I}$ for $J \ge 1$, $1 \le k \le J$ and $I \in \IZ$ defined via
\begin{equation}\label{A.39}
\left\{ \begin{split}
\gamma^{(J)}_{k,I} & = 1, \;\mbox{when $I \le 0$, $1 \le k \le J$, and $\gamma^{(J)}_{1,I}  = 0$, when $I \ge 1$},
\\
&\qquad \; \mbox{and by induction on $k$},
\\
\gamma^{(J)}_{k+1,I} & =  \big(1 - \check{c}_2 (J)\big)^{\sqrt{I} - 1} + I^{1 + \frac{k-1}{2}} \,\gamma^{(J)}_{k, [\sqrt{I}] - k+1}, \; \mbox{for $1 \le k < J$ and $I \ge 1$}.
\end{split}\right.
\end{equation}

\n
Then, using Lemma \ref{lemA.5}, we see by induction on $k$ that
\begin{equation}\label{A.40}
\mbox{for $J \ge 1$, $1 \le k \le J$ and $I \in \IZ$, $\Gamma_k^{(J)}(I) \le \gamma^{(J)}_{k,I}$}.
\end{equation}

\n
Moreover, see (2.37) of \cite{NitzSzni20}, one has for $1 \le k \le J$,
\begin{equation}\label{A.41}
\limsup\limits_{I \r \infty} I^{-1/2^{k-1}} \log \gamma^{(J)}_{k,I} \le \log \big(1 - \check{c}_2(J)\big) < 0.
\end{equation}
Thus setting
\begin{equation}\label{A.42}
\gamma_{I,J} = \gamma^{(J)}_{k = J, I}, \; \mbox{for $I,J \ge 1$},
\end{equation}
it follows that
\begin{equation}\label{A.43}
\begin{array}{l}
\Gamma^{(J)}_J(I) \le \gamma_{I,J}, \; \mbox{for $I,J \ge 1$, and}
\\
\lim\limits_I \gamma_{I,J} = 0, \; \mbox{for all $J \ge 1$ (in fact $\limsup\limits_I I^{-1/2^{J-1}} \log \gamma_{I,J} \le \log \big(1 - \check{c}_2 (J)\big) < 0$)}.
\end{array}
\end{equation}

\n
Note that under (\ref{A.24}) with (\ref{A.31}), (\ref{A.32}), we find that $P_0[H_{Res_{\cA_*,J}} = \infty] \le \gamma_{I,J}$ and the claim of Proposition \ref{prop3.1} follows.

\end{document}